\newtheorem{thm}{Theorem}
\numberwithin{thm}{section}
\newtheorem{prop}[thm]{Proposition}
\newtheorem{cor}[thm]{Corollary}
\newtheorem{asu}[thm]{Assumption}
\theoremstyle{definition}
\newtheorem{example}[thm]{Example}
\begin{document}
\date{}
\title{Forward sensitivity analysis for contracting\\stochastic systems}
\author{Thomas Flynn\footnote{Work performed at Department of Computer Science, Graduate Center of CUNY, New York, NY, USA. Current address: Brookhaven National Laboratory, Upton, NY, USA. }}
{\let\newpage\relax\maketitle}
\begin{abstract}
In this work we investigate gradient estimation for a class of contracting stochastic systems on a continuous state space.
\empty
We find conditions on the one-step transitions, namely differentiability and contraction in a Wasserstein distance, that guarantee differentiability of stationary costs. 
\empty
Then we show how to estimate the derivatives, deriving an estimator that can be seen as a generalization of the forward sensitivity analysis method used in deterministic systems.
\empty
We apply the results to examples, including a neural network model.
\end{abstract}
%\keywords{Markov chains, derivative estimation, contraction, iterated function systems}
%\ams{60J05}{90C31;65P99}
%\fi
\section{Introduction}
Stationary gradient estimation starts with a Markov kernel 
$P$
that depends on a parameter 
$\theta$.
Given a cost function 
$e$
defined on the states of the Markov chain, and assuming ergodicity of the process, the problem is to estimate the derivative of the average cost, at stationarity, with respect to the parameter
$\theta$.
That is, setting
$\pi_{\theta}$
to the stationary measure of 
$P_{\theta}$,
the problem is to estimate
$$
\tfrac{\partial }{\partial \theta}
\int_{X}e(x)\,\mathrm{d}\pi_{\theta}(x)
$$
In this work we investigate an approach to this problem based on forward sensitivity analysis, an algorithm used for estimating sensitivities in deterministic systems. We review this now to show the main idea. 

Consider a continuous state space $X\subseteq \mathbb{R}^{n_{X}}$ and a parameter space $\Theta \subseteq \mathbb{R}^{n_{\Theta}}$. Let
$f : X \times \Theta \to X$ 
be such that 
$f(\cdot,\theta)$
is a contraction mapping on $X$
for all values of $\theta$.
Then $f$ has a unique fixed-point
$x^{*}(\theta)$ 
for each $\theta \in \Theta$.
With further conditions on the differentiability of $f$,
it holds that $x^{*}$ is differentiable in $\Theta$. 
The problem is to estimate 
\begin{equation}\label{fp-der-det}
\tfrac{\partial }
     {\partial \theta}
(e \circ x^{*})(\theta)
\end{equation}
Let 
$M = L(\mathbb{R}^{n_{\Theta}},\mathbb{R}^{n_{X}})$, the space of linear maps from $\mathbb{R}^{n_{\Theta}}$ to $\mathbb{R}^{n_X}$. Define the map 
$T : X\times M \times \Theta \to X \times M $
by
$$
T( (x,m),\theta) = 
\left( 
  f(x,\theta), 
  \tfrac{\partial f}{\partial x}(x,\theta)m 
  +
  \tfrac{\partial f}{\partial \theta}(x,\theta)
\right)
$$
Using assumptions on the derivatives and contraction properties of $f$,
one can show that 
$T(\cdot,\theta)$
is also a contraction,
for a suitable metric on $X\times M$. 
Denoting by $(x^*,m^*)$ the fixed-point of $T$ at $\theta$, 
it can be proven that the derivative of the fixed-point cost is
$$
\tfrac{\partial }{\partial \theta}(e \circ x^{*})(\theta) = 
\tfrac{\partial e}{\partial x}(x^{*})m^{*}
$$
Based on this, to approximately compute (\ref{fp-der-det}) one can iterate $T$ to obtain a pair 
$(x,m)$
near
$(x^{*},m^{*})$,
and then prepare the gradient estimate by computing 
$\frac{\partial e}{\partial x}(x)m$.
For more background on forward sensitivity analysis we refer the reader to \cite{griewank-book}, Chapter 15.

This work considers the method in the probabilistic setting.
Let $P_{\theta}$ take the form 
$$
(P_{\theta}e)(x) = \int_{\Xi}e(f(x,\xi,\theta))\,\mathrm{d}\nu(\xi)
$$
for a probability space $(\Xi,\Sigma,\nu)$
and a function 
$f:X\times \Xi\times \Theta \to X$.
We find that if certain contraction and
differentiability conditions are satisfied, then
\begin{equation}\label{der-comp-id}
  \tfrac{\partial}{\partial \theta}
  \int_{X}e(x)\,\mathrm{d}\pi_{\theta}(x) 
  = 
  \int_{X\times M}
  \tfrac{\partial e}{\partial x}(x)m 
  \,\mathrm{d}\gamma_{\theta}(x,m)
\end{equation}
where 
$\gamma_{\theta}$
is the stationary measure on 
$X\times M$
of the recursion
\begin{subequations}
\begin{flalign}
\label{optproc1}
x_{n+1} &= f(x_n,\xi_{n+1},\theta) \\
\label{optproc2} 
m_{n+1} &= \tfrac{\partial f}{\partial x}(x_n,\xi_{n+1},\theta)m_n + \tfrac{\partial f}{\partial \theta}(x_n,\xi_{n+1},\theta)
\end{flalign}
\end{subequations}
where the $\xi_{n}$ form an i.i.d. sequence of $\nu$-distributed random variables.
There are several challenges associated with this.
The first is to extend the contraction framework to include probabilistically interesting systems.
The contraction framework should enable us to show convergence of the forward sensitivity process (\ref{optproc1}, \ref{optproc2}) as well as the underlying process. The second challenge is to show correctness of the procedure. 

A simple case of our main result can be stated as follows. In the statement of this theorem and throughout the article, a function is said to be $C^{1}$ if it is continuously differentiable and the function is $C^{2}$ if it is twice continuously differentiable. For a function $h$ defined on a set $X$ and taking values in a normed space, $\|h\|_{\infty} = \sup_{x\in X}\|h(x)\|$.

\begin{thm}
Let the function 
$f$
and the probability space $(\Xi,\Sigma,\nu)$ be such that
\begin{enumerate}
   \renewcommand{\theenumi}{\roman{enumi}}
\item 
   $\int_{\Xi}\|f(x,\xi,\theta)\|^{2}\,\mathrm{d}\nu(\xi) <\infty$ for all $(x,\theta) \in X\times\Theta$,
\item 
  $(x,\theta) \mapsto f(x,\xi,\theta)$ is a $C^{2}$ function for each $\xi\in\Xi$,
\item For $0< i+j\leq 2$, the functions 
  $L_{X^{i},\Theta^{j}}(x,\theta) 
  = 
  \int_{\Xi}
  \|\tfrac{\partial^{i+j}f}{\partial x^{i}\partial \theta^{j}}(x,\xi,\theta)\|^{2}
  \,\mathrm{d}\nu(\xi)$
are continuous and bounded on $X\times \Theta$, and in particular, $\sup_{(x,\theta)}L_{X}(x,\theta)  <1$.
\end{enumerate}
Then the forward sensitivity process (\ref{optproc1}, \ref{optproc2}) converges weakly to a stationary measure $\gamma_{\theta}$, and equation (\ref{der-comp-id}) holds for those $e:X\to\mathbb{R}$ that are $C^{2}$ with $\|\frac{\partial e}{\partial x}\|_{\infty} + \|\frac{\partial^{2} e}{\partial x^{2}}\|_{\infty} < \infty$.
\end{thm}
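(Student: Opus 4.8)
The plan is to establish the two assertions—weak convergence of the joint process to $\gamma_{\theta}$ and the identity (\ref{der-comp-id})—in three stages: first a mean-square contraction of the base recursion, then a lift of this to the joint $(x,m)$ system yielding $\gamma_{\theta}$, and finally the transfer of a finite-horizon chain-rule identity to the stationary regime.

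I would begin with (\ref{optproc1}). Coupling two copies $x_n,x_n'$ driven by the same noise and writing $f(x_n,\xi,\theta)-f(x_n',\xi,\theta)=\int_0^1\frac{\partial f}{\partial x}(x_n'+t(x_n-x_n'),\xi,\theta)\,\mathrm{d}t\,(x_n-x_n')$, then squaring, applying Cauchy--Schwarz in $t$, and integrating over $\xi$, the estimate collapses to $\int_0^1 L_X\,\mathrm{d}t\le\lambda:=\sup L_X<1$, so that $E[\|x_{n+1}-x_{n+1}'\|^2\mid\mathcal{F}_n]\le\lambda\|x_n-x_n'\|^2$. This is a strict $W_2$-contraction; together with the linear-growth bound $\int_\Xi\|f(x,\xi,\theta)\|^2\,\mathrm{d}\nu\le C(1+\|x\|^2)$ obtained from (i) and the same coupling, the map $\mu\mapsto\mu P_\theta$ preserves and contracts the complete space $(\mathcal{P}_2(X),W_2)$, so Banach's theorem gives a unique stationary $\pi_\theta\in\mathcal{P}_2(X)$ with geometric $W_2$-convergence.

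Writing $A_n=\frac{\partial f}{\partial x}(x_n,\xi_{n+1},\theta)$ and $b_n=\frac{\partial f}{\partial\theta}(x_n,\xi_{n+1},\theta)$, the recursion (\ref{optproc2}) is affine, $m_{n+1}=A_nm_n+b_n$, with $E[\|A_n\|^2\mid x_n]=L_X(x_n,\theta)\le\lambda$ and $E[\|b_n\|^2\mid x_n]=L_\Theta(x_n,\theta)\le\|L_\Theta\|_\infty$. Choosing $\epsilon$ with $(1+\epsilon)\lambda<1$ and expanding $\|m_{n+1}\|^2$ yields $E\|m_{n+1}\|^2\le(1+\epsilon)\lambda\,E\|m_n\|^2+C_\epsilon$, so $\sup_nE\|m_n\|^2<\infty$ uniformly in $\theta$. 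For $\gamma_\theta$ and weak convergence I would avoid a single product metric—the natural coupling of two $(x,m)$-copies produces a cross term $(A(x_n)-A(x_n'))m_n'$ carrying the unbounded weight $\|m_n'\|$—and instead use backward iteration. Passing to a two-sided stationary $x$-process, the candidate stationary solution is the series $\sum_{j\ge0}A_{-1}\cdots A_{-j}b_{-j-1}$; peeling off one independent noise at a time and using the tower property with $E[\|A_{-i}\|^2\mid x_{-i}]\le\lambda$ gives $E\|A_{-1}\cdots A_{-j}b_{-j-1}\|^2\le\lambda^j\|L_\Theta\|_\infty$, so the series converges in $L^2$. Its law is $\gamma_\theta$, and since the forward process at time $n$ agrees in law with the $n$-th partial sum, $\mathcal{L}(x_n,m_n)\Rightarrow\gamma_\theta$.

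For the identity, the key pathwise fact is that if the joint recursion is started at deterministic $x_0=x$ with $m_0=0$, then an induction using the chain rule and (ii) shows $m_n=\frac{\partial x_n}{\partial\theta}$ for every $n$ and every noise path. Differentiating $E[e(x_n)]$ in $\theta$—justified by Vitali's theorem, the difference quotients converging a.s. and being uniformly integrable by the $L^2$-bounds on $m_n$ and $\|\frac{\partial e}{\partial x}\|_\infty<\infty$—gives the finite-horizon identity $\frac{\partial}{\partial\theta}(P_\theta^ne)(x)=E[\frac{\partial e}{\partial x}(x_n)m_n]$. Letting $n\to\infty$, the right side converges to $\int\frac{\partial e}{\partial x}(x)m\,\mathrm{d}\gamma_\theta$ by the weak convergence above plus uniform integrability of $\frac{\partial e}{\partial x}(x_n)m_n$, while $(P_\theta^ne)(x)\to\int e\,\mathrm{d}\pi_\theta$ because $e$ is Lipschitz (from $\|\frac{\partial e}{\partial x}\|_\infty<\infty$) and $W_2$ dominates $W_1$. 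Because every rate and constant above is uniform in $\theta$ by (iii), the derivatives $E[\frac{\partial e}{\partial x}(x_n)m_n]$ converge locally uniformly in $\theta$, which lets me pass $\frac{\partial}{\partial\theta}$ through the limit and conclude that $\theta\mapsto\int e\,\mathrm{d}\pi_\theta$ is $C^1$ and equals (\ref{der-comp-id}).

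The two steps I expect to demand the most care are the construction of $\gamma_\theta$ and the limit interchange. The former is delicate precisely because the $m$-coupling is not pointwise contracting—the cross term carries the unbounded factor $\|m\|$—so the backward $L^2$-series is what rescues the argument, though one must track the independence and Markov structure carefully while peeling noises. The latter requires upgrading pointwise to local uniform convergence of $\partial_\theta(P_\theta^ne)$, which is exactly what legitimizes exchanging $\frac{\partial}{\partial\theta}$ with $\lim_{n\to\infty}$; this is where the uniform-in-$\theta$ bounds of hypothesis (iii), rather than merely pointwise ones, become indispensable.
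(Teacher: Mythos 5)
Your route is genuinely different from the paper's, so it is worth saying what differs before naming the problem. You construct $\gamma_{\theta}$ by a backward (Loynes-type) random series over a two-sided stationary $x$-process, and you aim to get (\ref{der-comp-id}) from the pathwise identity $m_n=\tfrac{\partial x_n}{\partial \theta}$ together with an interchange of $\tfrac{\partial}{\partial\theta}$ and $n\to\infty$. The paper instead makes the joint kernel itself pointwise contracting in a weighted Finsler metric whose $x$-direction is inflated by a factor $1+\eta_4h(z)$ with $h$ growing linearly in $\|A(x)m\|$ (Propositions \ref{hierarchy-lyapunov} and \ref{joint-metric}), which yields ergodicity of $(x_n,m_n)$ from arbitrary integrable initial data, and it proves correctness by showing that $l(e)=\int\tfrac{\partial e}{\partial x}(x)m\,\mathrm{d}\gamma_{\theta}$ solves the fixed-point equation $l=lP_{\theta}+\pi_{\theta}\tfrac{\partial}{\partial\theta}P_{\theta}$ of Theorem \ref{diff-thm}. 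Several of your steps are sound: the $W_2$-contraction of the base chain, the tower-property peeling that gives $L^{2}$-convergence of the series $\sum_{j\geq 0}A_{-1}\cdots A_{-j}b_{-j-1}$ and stationarity of its law, and the finite-horizon identity $\tfrac{\partial}{\partial\theta}(P_{\theta}^{n}e)(x)=\mathbb{E}_{x}[\tfrac{\partial e}{\partial x}(x_n)m_n]$ for the chain started at deterministic $x_0=x$, $m_0=0$.

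The genuine gap is where the two halves must meet. Your backward construction gives weak convergence of $(x_n,m_n)$ to $\gamma_{\theta}$ only for the initialization $x_0\sim\pi_{\theta}$, $m_0=0$: only then is $\mathcal{L}(x_n,m_n)$ equal, by time-shift invariance, to the law of the $n$-th partial sum. But your differentiation argument consumes the statement $\mathbb{E}_{x}[\tfrac{\partial e}{\partial x}(x_n)m_n]\to\gamma_{\theta}(\tfrac{\partial e}{\partial x}m)$ for a \emph{deterministic} start, since the identity $\tfrac{\partial}{\partial\theta}(P_{\theta}^{n}e)(x)=\mathbb{E}_{x}[\cdots]$ requires a $\theta$-independent initial law (starting from $\pi_{\theta}$ would differentiate the initial condition as well, which is circular). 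So "by the weak convergence above" is a non sequitur: you still need ergodicity of the joint chain from deterministic starts, which is exactly the cross-term coupling problem you declared you would avoid, and it is also what the theorem's convergence claim for (\ref{optproc1}, \ref{optproc2}) really asserts. The gap is repairable with the hypotheses you have: coupling two joint chains on common noise and conditioning on $\mathcal{F}_n$, part (iii) gives $\mathbb{E}[\|A_n-A_n'\|\mid\mathcal{F}_n]\leq(\sup L_{X^{2}})^{1/2}\|x_n-x_n'\|$, so the cross term $(A_n-A_n')m_n'$ is $O(\lambda^{n/2})$ in $L^{1}$ and $\mathbb{E}\|m_n-m_n'\|\to 0$ geometrically; this same estimate supplies the locally-uniform-in-$\theta$ rate that your final interchange of $\tfrac{\partial}{\partial\theta}$ with $\lim_n$ presupposes (as written, uniformity of the constants gives uniform integrability but no rate at all from a deterministic start). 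A smaller repair: Vitali's theorem needs uniform integrability of the difference quotients $(x_n(\theta+\Delta\theta)-x_n(\theta))/\|\Delta\theta\|$, which follows from an $L^{2}$ bound on the difference-quotient recursion, uniform in $\Delta\theta$, not from the $L^{2}$ bound on $m_n(\theta)$ alone.
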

The full version, stated below in Theorem \ref{mainthm} relaxes the assumptions. In the general version the various bounds are assumed to hold with respect to a Finsler structure.
\subsection{Overview of main results}
First the contraction framework is introduced.
Second, criteria for differentiability of the stationary costs are presented. The third component is a set of conditions on the function 
$f$ 
that let us apply the abstract result on stationary differentiability, establish convergence of the sensitivity process 
$(x_n,m_n)$,
 and allow us to show that 
equation (\ref{der-comp-id})
 holds. Finally, we consider an application to neural networks.
\subsubsection{Contraction framework.}
Given a matrix valued function $A(x)$ and a norm $\|\cdot\|$ on $\mathbb{R}^{n_{X}}$, we consider the following ergodicity condition
\begin{equation}\label{avg-ctr-lyap}
 \sup_{x\in X}
 \left(
   \int_{\Xi}
   \|
   A(f(x,\xi))
   \tfrac{\partial f}{\partial x}(x,\xi)A(x)^{-1}
   \|^{p}
   \,\mathrm{d}\nu(\xi)
   \right)^{1/p}
   < 1
\end{equation}
The object inside the norm is the composition of the three linear maps $A(f(x,\xi))$, $\frac{\partial f}{\partial x}(x,\xi)$ and $A(x)^{-1}$, and the norm in this inequality is that induced by $\|\cdot\|$ on the space of linear maps $L(\mathbb{R}^{n_{X}},\mathbb{R}^{n_{X}})$.
 Formally, the map 
 $(x,u) \mapsto \|A(x)u\|$
 defines a Finsler structure on the space $X$, 
 which induces a metric $d_A$ on $X$. 
 This is extended to a metric on probability measures using the Wasserstein distance $d_{p,A}$.
 The condition (\ref{avg-ctr-lyap}) implies the Markov kernel $P$ is a contraction mapping for this distance.
This is developed in Section \ref{sect:contraction-framework}. In Section \ref{sect:interconnection-of-contractions} we consider interconnections of contracting systems,
obtaining sufficient conditions for both feedback and hierarchical combinations of contracting systems to again be contracting. This is useful to analyze the forward sensitivity process,
as it exhibits a hierarchical structure.

\subsubsection{Stationary differentiability.}
In Section 
\ref{sect:stationary-differentiability}
we give abstract conditions for stationary differentiability, using a variant of the proof technique in \cite{heidergott2003taylor}. 
The equation
\begin{equation}\label{grad-eqn}
  l = l P_{\theta} + \pi_{\theta}\tfrac{\partial}{\partial \theta}P_{\theta}
\end{equation}
is shown to have a unique solution in the variable $l$, and this $l$ is shown to evaluate the stationary derivatives, meaning
$
l(e) = \tfrac{\partial}{\partial \theta}\int_{X}e(x)\,\mathrm{d}\pi_{\theta}(x).
$
While similar formulas have been recovered by other authors
(see \cite{vazquez1992estimation, heidergott2003taylor, pflug-mvd}) we rederive this using assumptions that are relevant for the smooth systems we are interested in. 

\subsubsection{Gradient estimation.}
To study  the forward sensitivity process we define an appropriate metric on the space $X\times M$ and prove a pointwise contraction inequality for the joint system (\ref{optproc1}, \ref{optproc2}) in this distance. This is used together with a Lyapunov function for the joint system to establish ergodicity of the sensitivity process.
This is done in Section \ref{sect:gradient-estimation}. It is then established that the functional
$
e
\mapsto
\int_{X\times M}
\tfrac{\partial e}{\partial x}(x)m 
\,\mathrm{d}\gamma_{\theta}(x,m)$
verifies equation (\ref{grad-eqn}).
We conclude that equation (\ref{der-comp-id}) holds for the class of cost functions. 
\\\\
Before formally stating the assumptions and  main results, we introduce some notation and conventions. For a function $f:X\to \mathbb{R}^{n}$  where $X \subseteq \mathbb{R}^{m}$, we denote by $\frac{\partial f}{\partial x}(x_0)$ the derivative of $f$ with respect to $x$ at the point $x_0$, and for a vector $u \in \mathbb{R}^{m}$, we denote by $\frac{\partial f}{\partial x}(x_0)u$  the $\mathbb{R}^{n}$-valued result of applying this linear map to the vector $u$. The second derivative of $f$ with respect to $x$ is $\frac{\partial^{2} f}{\partial x^{2}}$, and $\frac{\partial^{2}f}{\partial x^{2}}(x_0)[u,v]$ refers to the $\mathbb{R}^{m}$-valued result of applying this bilinear map to the arguments $u,v$. Given norms $\|\cdot\|_{X}$ and $\|\cdot\|_{Y}$ on the space $\mathbb{R}^{m}$ and $\mathbb{R}^{n}$, recall that the norm of a linear map $E:\mathbb{R}^{n} \to \mathbb{R}^{m}$ is $\|E\| = \sup_{\|u\|_{X} =1}\|Eu\|_{Y}$. For a bilinear map $F$ defined on $\mathbb{R}^{n} \times \mathbb{R}^{m}$ and taking values in a third space with norm $\|\cdot\|_{Z}$, the norm is $\|F\| = \sup_{\|u\|_{X}=\|v\|_{Y} = 1}\|F[u,v]\|_{Z}$. Given two linear maps $E$ and $F$, their direct sum is the linear map
$(E \oplus F)(u,v) = (Eu,Fv)$. For reference the appendix contains a summary of notations and definitions of spaces used throughout the paper.

\begin{asu}\label{asu:lipVA}
The set $X$ is a closed, convex subset of $\mathbb{R}^{n_X}$, and $\mathbb{R}^{n_{X}}$ carries a norm $\|\cdot\|_X$.
The function $A:X\to L(\mathbb{R}^{n_{X}},\mathbb{R}^{n_{X}})$ is continuous, such that each $A(x)$ is invertible, and
   $\sup_{x\in X}\|A(x)^{-1}\|_{X} < \infty$.
\end{asu}
We will require differentiability and integrability of $f$:
\begin{asu}\label{asu:differentiable}
For an open set $\Theta \subseteq \mathbb{R}^{n_{\Theta}}$, the function
$f: X \times \Xi \times \Theta \to X$
satisfies
\begin{enumerate}
   \renewcommand{\theenumi}{\roman{enumi}}
\item  
  $\xi \mapsto d_A(x,f(x,\xi,\theta))^{2}$
  is $\nu$-integrable for all 
  $(x,\theta) \in X\times\Theta$,
\item 
  $(x,\theta) \mapsto f(x,\xi,\theta)$ is twice continuously differentiable ($C^{2}$) for each $\xi\in\Xi$.
\end{enumerate}
\end{asu}
We also require some bounds on $P$
as a function of $\theta$, formulated with the help of a function $B(x)$ 
taking values in the invertible 
$n_{\Theta} \times n_{\Theta}$
matrices. 
\begin{asu}\label{asu:lipVB}
$\mathbb{R}^{n_{\Theta}}$ has a norm $\|\cdot\|_{\Theta}$. The function $B:X\to L(\mathbb{R}^{n_{\Theta}},\mathbb{R}^{n_{\Theta}})$ takes values in the invertible linear maps, and $x \mapsto \|B(x)\|_{\Theta}$ is a $d_A$-Lipschitz function.
\end{asu}
For an example when 
Assumption \ref{asu:lipVB}
is satisfied, consider the following. 
Let 
$g:X \to \mathbb{R}_{\geq 0}$
be a function  that is Lipschitz continuous 
with respect to the underlying norm $\|\cdot\|_{X}$ on $X$. 
Then use
$A(x) = \exp(g(x))I_{n_{X}}$ and $B(x) = \exp(g(x))I_{n_{\Theta}}$, where $I_n$ is the $n\times n$ identity matrix. 
Of course, the assumption always holds when
$B(x) = I_{n_{\Theta}}$.

The next assumptions relate to the contraction property of $P$ and the differentiability properties of $P_{\theta}e$.
Before continuing we define several norms derived from $A$ and $B$.
At each $x \in X$ the matrix $A(x)$ defines a norm 
$\|\cdot\|_{A(x)}$ 
on
$\mathbb{R}^{n_{X}}$
by
$\|u\|_{A(x)} = \|A(x)u\|$.
and $B(x)$
defines a norm on
$\mathbb{R}^{n_{\Theta}}$
by
$\|v\|_{B(x)} = \|B(x)v\|$.
These extend to norms on the various linear spaces.
For example, if 
$l \in L(\mathbb{R}^{n_{X}},\mathbb{R})$
then
$
\|l\|_{A(x)}
=
\|l A(x)^{-1}\|
$. 
For a bilinear map 
$
Q
\in
L(\mathbb{R}^{n_{X}},\mathbb{R}^{n_{X}};\mathbb{R})
$
we can write
$
\|Q\|_{A(x),A(x)} = 
\|Q(A(x)^{-1}\oplus A(x)^{-1})\| 
$.
Further extend this to functions from 
$X$
into the linear spaces by taking supremums, e.g. if
$h:X\to L(\mathbb{R}^{n_{\Theta}},\mathbb{R})$
then
$\|h\|_{B} = \sup_{x}\|h(x)\|_{B(x)}$.
For the case of a real-valued $h:X\to\mathbb{R}$, let $\|h\|_{A} = \sup_{x}\frac{|h(x)|}{1+d_{A}(x,x_0)}$, where $x_0$ is an arbitrary basepoint in $X$.

We introduce the space of cost functions $\mathcal{E}^{2}$:
$$
\mathcal{E}^{2} =
\{
h: X\to \mathbb{R} \mid h \text{ is } C^{2}
\text{ and }
\|h\|_{A} + \|\tfrac{\partial h}{\partial x}\|_{A} 
+ 
\|\tfrac{\partial^{2}h}{\partial x^{2}}\|_{A,A} < \infty
\}$$
On
$\mathcal{E}^{2}$
we put the norm 
\begin{equation}\label{e2-norm}
\|h\|_{\mathcal{E}^{2}} 
= 
\|h\|_{A} + 
\|\tfrac{\partial h}{\partial x}\|_{A} 
+ 
\|\tfrac{\partial^{2} h}{\partial x^{2}}\|_{A,A}
\end{equation}
We consider bounds on the derivatives of $f$
formulated using the following functions:
    $$ L_X(x,\theta) = \left(\int_{\Xi}
      \|A(f(x,\xi,\theta))
      \tfrac{\partial f}{\partial x}(x,\xi,\theta)
      A(x)^{-1}
      \|^{2}
      \, \mathrm{d}\nu(\xi)\right)^{1/2}$$
    $$L_{\Theta}(x,\theta)= \left(\int_{\Xi}
      \|A(f(x,\xi,\theta))
      \tfrac{\partial f}{\partial \theta}(x,\xi,\theta)
      B(x)^{-1}
      \|^{2}
      \, \mathrm{d}\nu(\xi)\right)^{1/2}$$
    $$ L_{X^{2}}(x,\theta) = \int_{\Xi}
    \|A(f(x,\xi,\theta))
    \tfrac{\partial^{2} f}{\partial x^{2}}(x,\xi,\theta)
    \left(A(x)^{-1} \oplus A(x)^{-1}\right)
    \|
    \, \mathrm{d}\nu(\xi)$$
    $$L_{\Theta^{2}}(x,\theta)
    =
    \int_{\Xi}
    \|A(f(x,\xi,\theta))
    \tfrac{\partial^{2} f}{\partial \theta^{2}}(x,\xi,\theta)
    \left(B(x)^{-1} \oplus B(x)^{-1}\right)
    \|
    \, \mathrm{d}\nu(\xi)$$
    $$
    L_{X,\Theta}(x,\theta)
    =
    \int_{\Xi}
    \|A(f(x,\xi,\theta))
    \tfrac{\partial^{2} f}{\partial x\partial \theta}
    (x,\xi,\theta)
    \left(A(x)^{-1} \oplus B(x)^{-1}\right)
    \|
    \, \mathrm{d}\nu(\xi)
    $$
\begin{asu}\label{asu:f}The functions $L_{X^{i},\Theta^{j}}$ satisfy
\begin{enumerate}
   \renewcommand{\theenumi}{\roman{enumi}}
 \item  The various functions 
    $L_{X^{i},\Theta^{j}}$
    are continuous on $X\times \Theta$,
\item There is a $K_X \in [0,1)$ such that
  $\sup\limits_{(x,\theta)\in X\times\Theta}L_{X}(x,\theta) \leq K_X$, \label{part-ctr-of-l-asu}
\item For $0 < i+j\leq 2$, there are
    $K_{X^i,\Theta^j}$ such that
    $\sup\limits_{(x,\theta)\in X\times\Theta}L_{X^{i},\Theta^{j}}(x,\theta) \leq K_{X^{i},\Theta^{j}}$. \label{part-other-asu}
\end{enumerate}
\end{asu}
Using these assumptions and definitions, we can now state the main result.
\begin{thm}\label{mainthm}
  Let Assumptions \ref{asu:lipVA}, \ref{asu:differentiable},  \ref{asu:lipVB},
  and \ref{asu:f} be satisfied.
  Let $\theta$ be an arbitrary point of $\Theta$. Then the forward sensitivity process (\ref{optproc1}, \ref{optproc2}) possesses a unique stationary measure $\gamma_{\theta}$
  and
  for any
  $e\in \mathcal{E}^{2}$
  equation (\ref{der-comp-id}) is valid.
  Furthermore, if the variables $(x_1,m_1)$ satisfy the integrability condition
  $\mathbb{E}[d_A(x_0,x_1) + \|A(x_1)m_1\|] < \infty$
  for an arbitrary basepoint $x_0$,  then
  $\mathbb{E}[\tfrac{\partial e}{\partial x}(x_n)m_{n}]
  \rightarrow
  \tfrac{\partial}{\partial \theta}\int_{X}e(x)\, \mathrm{d}\pi_{\theta}(x)$
  as $n\rightarrow \infty$.
\end{thm}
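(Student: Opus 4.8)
The plan is to assemble the three machines advertised in the overview and to check that Assumptions \ref{asu:lipVA}--\ref{asu:f} supply exactly the hypotheses each one needs. First I would treat the underlying chain: Assumption \ref{asu:f}(\ref{part-ctr-of-l-asu}) says $\sup_{(x,\theta)}L_X(x,\theta)\le K_X<1$, which is precisely the ergodicity condition (\ref{avg-ctr-lyap}) with $p=2$ for the kernel $P_\theta$. Combined with Assumptions \ref{asu:lipVA} and \ref{asu:differentiable}(i), so that $d_A$ and the Wasserstein distance $d_{2,A}$ are well defined and finite on the relevant measures, the contraction framework of Section \ref{sect:contraction-framework} yields that $P_\theta$ is a $d_{2,A}$-contraction with a unique stationary measure $\pi_\theta$, and that $x_n$ converges to it.

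Next I would handle the joint process (\ref{optproc1},\ref{optproc2}), which is hierarchical: $x$ evolves autonomously, while $m$ is an affine function of $m_n$ driven by $x_n$ and $\xi_{n+1}$, with multiplicative factor $\tfrac{\partial f}{\partial x}$ and additive term $\tfrac{\partial f}{\partial \theta}$. Using the interconnection results of Section \ref{sect:interconnection-of-contractions} on a metric on $X\times M$ built from $A$ and $B$, I would show that $L_X<1$ forces contraction of the $m$-dynamics, while $L_\Theta$ (finite by Assumption \ref{asu:f}(\ref{part-other-asu}), with $\|B\|$ controlled by Assumption \ref{asu:lipVB}) keeps the additive drift bounded. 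Pairing this pointwise contraction with a Lyapunov function built from $d_A(x,x_0)$ and $\|A(x)m\|$ gives a geometric drift for the joint kernel $Q_\theta$; its ergodicity produces the unique stationary measure $\gamma_\theta$ and, crucially, uniform moment bounds making $\int \|A(x)m\|\,\mathrm{d}\gamma_\theta<\infty$. This proves the first assertion, and since $\|\tfrac{\partial e}{\partial x}\|_A<\infty$ for $e\in\mathcal{E}^2$, it guarantees that $l(e):=\int_{X\times M}\tfrac{\partial e}{\partial x}(x)m\,\mathrm{d}\gamma_\theta$ is finite.

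The correctness identity (\ref{der-comp-id}) I would obtain by verifying that this $l$ solves the stationary-gradient equation (\ref{grad-eqn}) and invoking the uniqueness from Section \ref{sect:stationary-differentiability}. Differentiating $(P_\theta e)(x)=\int_\Xi e(f(x,\xi,\theta))\,\mathrm{d}\nu(\xi)$ under the integral sign, which is legitimate because $f$ is $C^2$ and the $L$-functions dominate the relevant derivatives, gives
\[
\bigl(lP_\theta + \pi_\theta\tfrac{\partial}{\partial\theta}P_\theta\bigr)(e)
=\int_{X\times M}\int_\Xi \tfrac{\partial e}{\partial x}(f(x,\xi,\theta))\Bigl(\tfrac{\partial f}{\partial x}(x,\xi,\theta)m+\tfrac{\partial f}{\partial\theta}(x,\xi,\theta)\Bigr)\,\mathrm{d}\nu(\xi)\,\mathrm{d}\gamma_\theta,
\]
where I used that the $X$-marginal of $\gamma_\theta$ is $\pi_\theta$ (by uniqueness, since that marginal is $P_\theta$-stationary) to rewrite the $\pi_\theta\tfrac{\partial}{\partial\theta}P_\theta$ term as an integral over $X\times M$. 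The inner parenthesis is exactly the $m$-update, so the right-hand side equals $\int \tfrac{\partial e}{\partial x}(x')m'\,\mathrm{d}(\gamma_\theta Q_\theta)(x',m')$, which by stationarity $\gamma_\theta Q_\theta=\gamma_\theta$ is $l(e)$. Hence $l$ satisfies (\ref{grad-eqn}), so $l(e)=\tfrac{\partial}{\partial\theta}\int_X e\,\mathrm{d}\pi_\theta$, which is (\ref{der-comp-id}).

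Finally, for the convergence of the estimates I would propagate the Lyapunov moment bound from the assumed integrability $\mathbb{E}[d_A(x_0,x_1)+\|A(x_1)m_1\|]<\infty$ to a bound uniform in $n$, and combine the $d_{2,A}$-type convergence of the law of $(x_n,m_n)$ to $\gamma_\theta$ with the linear-growth estimate $|\tfrac{\partial e}{\partial x}(x)m|\le \|\tfrac{\partial e}{\partial x}\|_A\,\|A(x)m\|$ to pass to the limit in $\mathbb{E}[\tfrac{\partial e}{\partial x}(x_n)m_n]$. I expect the main obstacle to be exactly this unbounded passage to the limit, together with the justification of differentiation under the integral: the test function $(x,m)\mapsto\tfrac{\partial e}{\partial x}(x)m$ is neither bounded nor globally Lipschitz in $m$, so weak convergence alone does not suffice and one must upgrade it using uniform integrability obtained from the joint Lyapunov function. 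Establishing that drift estimate for the unbounded $m$-component is, I believe, the technical heart of the whole argument.
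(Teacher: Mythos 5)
Your proposal is correct and follows essentially the same route as the paper: contraction of the underlying chain, ergodicity of the joint process via the hierarchical interconnection together with a Lyapunov function built from $\|A(x)m\|$, $\|B(x)\|$ and $d_A(x_0,x)$, verification that $l(e)=\int_{X\times M}\tfrac{\partial e}{\partial x}(x)m\,\mathrm{d}\gamma_{\theta}$ solves equation (\ref{grad-eqn}) by stationarity of $\gamma_{\theta}$ and differentiation under the integral (using that the $X$-marginal of $\gamma_{\theta}$ is $\pi_{\theta}$), and then the uniqueness statement of Theorem \ref{diff-thm}. The obstacle you single out at the end---that weak convergence cannot handle the unbounded test function $(x,m)\mapsto\tfrac{\partial e}{\partial x}(x)m$---is resolved in the paper by exactly the device you anticipate: the weighted metric $H$, with the factor $1+\eta_4 h(z)$, makes this function genuinely $d_H$-Lipschitz and $V$-bounded (Proposition \ref{partial-e-mult-m-lipschitz}), so the convergence class of Corollary \ref{ergodicity-sensitivity} covers it directly.
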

\subsubsection{Neural network application.}
In Section \ref{sect:example} 
two examples are considered.
The first involves neural networks.
In neural networks, a central problem is to compute derivatives
of cost functionals with respect to network parameters (weights on the connections between nodes). 
We are concerned with long-term average cost problems, a type of problem that is relevant when a network has cycles.
The back-propagation algorithm for calculating derivatives \cite{rumelhart1986learning}, originally formulated for a continuous state-space model with a finite horizon objective, is also valid for calculating gradients in long-term average cost problems under contraction assumptions \cite{pineda88}.  Our contribution addresses the long-term average cost problem for continuous stochastic networks.

The example system consists of a network with weights on connections between units. 
At each step every node updates its value based on the values of its neighbors, but only a random subset of possible connections are activated, leading to a stochastic process. 
We find contraction conditions based on a sparsity coefficient,  and verify that stochastic forward sensitivity analysis can be used to calculate the derivative of stationary costs.  We present a second example to illustrate using a non-trivial metric on the underlying system. We finish with a discussion in Section \ref{sect:discussion}.

\section{Contraction framework}
\label{sect:contraction-framework}
We describe a class of metrics on Euclidean space that form the basis for the subsequent discussion of contraction. 
These metrics are defined by minimizing a length functional, and 
form a subclass of the Finsler metrics. Then we present ergodicity conditions which rely on pointwise contraction estimates involving such metrics.

Let $X$ be a closed convex subset of the Euclidean space $\mathbb{R}^{n}$ and let $[x\leadsto y]$ be the set of piecewise $C^{1}$ curves from $x$ to $y$.
Given a norm $\|\cdot\|$ on $\mathbb{R}^{n}$ and 
 a function $x \mapsto A(x)$ taking values in the invertible $n\times n$ matrices, one can define a metric on $X$ as follows. 
\begin{prop}\label{conditions-for-metric}
  Let $\|\cdot\|$ be a norm on $\mathbb{R}^{n}$ and let
  $x \mapsto A(x)$ be a continuous function that assigns to each 
  $x\in X$ an invertible linear map $A(x)$ on 
  $\mathbb{R}^{n}$, in such a way that
  $\sup_{x\in X} \|A(x)^{-1}\| < \infty$. 
  For a piecewise $C^{1}$ curve 
  $\gamma:[\gamma_{s},\gamma_{e}] \to X$,
  define
  $L(\gamma) = \int_{\gamma_{s}}^{\gamma_{e}}\|A(\gamma(t))\gamma'(t)\|\,\mathrm{d}t$.
  Then the function
  $d_{A}(x,y) = \inf_{\gamma \in [x\leadsto y]}L(\gamma)$
  defines a metric on $X$ compatible with the Euclidean topology, 
  and $(X,d_A)$ is complete.
\iffalse  Furthermore, 
  a differentiable function into a 
  normed vector space $Y$
  $e:X\to Y$
  is $d_A$-Lipschitz iff \\
  $\sup_{x}\|De(x)A^{-1}(x)\| \leq K$.\fi
\end{prop}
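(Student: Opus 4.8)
The plan is to establish the three required properties of $d_A$ in turn: that it is finite and well-defined, that it satisfies the metric axioms, that it induces the Euclidean topology, and finally that $(X,d_A)$ is complete. The key quantitative handle throughout will be the two-sided comparison of the Finsler length $\|A(x)u\|$ with the ambient norm $\|u\|$. By hypothesis $\sup_x\|A(x)^{-1}\| =: C < \infty$, which gives the lower bound $\|u\| = \|A(x)^{-1}A(x)u\| \le C\,\|A(x)u\|$, i.e. $\|A(x)u\| \ge C^{-1}\|u\|$. For the upper bound I would use continuity of $A$: on any compact set $\|A(x)\|$ is bounded, say by some $C'$, giving $\|A(x)u\| \le C'\|u\|$ locally. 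These two comparisons are the engine for everything.

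**First I would verify $d_A$ is a genuine metric.** Symmetry follows by reversing curves (reparametrizing $\gamma(t) \mapsto \gamma(\gamma_s+\gamma_e-t)$, noting $L$ is invariant since it integrates a norm). The triangle inequality follows from concatenation of curves: given near-optimal curves $x\leadsto y$ and $y\leadsto z$, their concatenation is a piecewise $C^1$ curve from $x$ to $z$ whose length is the sum, so $d_A(x,z) \le d_A(x,y)+d_A(y,z)$; an infimum argument closes it. Convexity of $X$ guarantees $[x\leadsto y]$ is nonempty (the straight segment lies in $X$), so $d_A(x,y) < \infty$. For positivity, the lower bound $\|A(x)u\| \ge C^{-1}\|u\|$ shows $L(\gamma) \ge C^{-1}\,\mathrm{length}_{\|\cdot\|}(\gamma) \ge C^{-1}\|x-y\|$, so $d_A(x,y) \ge C^{-1}\|x-y\| > 0$ when $x\ne y$.

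**Next I would establish topological compatibility.** The lower bound just derived gives $d_A(x,y) \ge C^{-1}\|x-y\|$, so $d_A$-convergence implies $\|\cdot\|$-convergence. For the reverse, fix $x$ and a small ball $\{y : \|y-x\|\le r\}$ on which $\|A(\cdot)\| \le C'$; for $y$ in this ball the straight segment has $d_A$-length at most $C'\|x-y\|$, giving the local upper bound $d_A(x,y) \le C'\|x-y\|$. The two-sided estimate $C^{-1}\|x-y\| \le d_A(x,y) \le C'\|x-y\|$ valid locally shows the topologies agree. Completeness then follows: a $d_A$-Cauchy sequence is $\|\cdot\|$-Cauchy by the global lower bound, hence converges in $\mathbb{R}^n$ to some limit, which lies in $X$ since $X$ is closed; the local upper bound upgrades this to $d_A$-convergence.

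**The main obstacle** I anticipate is the local upper bound $d_A(x,y) \le C'\|x-y\|$, because the global hypothesis only controls $\|A(x)^{-1}\|$, not $\|A(x)\|$; the bound on $\|A(\cdot)\|$ is only local via continuity. Consequently the upper comparison constant $C'$ depends on the chosen compact neighborhood and one cannot expect a global Lipschitz bound $d_A(x,y) \le C'\|x-y\|$ over all of $X$. This is harmless for topological compatibility and for completeness (both are local-to-global arguments: compatibility is a local statement, and completeness only needs the global lower bound to extract a candidate limit plus the local upper bound to conclude convergence), but it requires care to phrase the upper estimate over a neighborhood rather than globally. I would handle this by always localizing the upper bound to a $\|\cdot\|$-ball on which continuity of $A$ supplies a finite bound on $\|A(\cdot)\|$.
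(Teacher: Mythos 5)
Your proposal is correct and follows essentially the same route as the paper: the global lower bound $\|x-y\|\leq C\,d_A(x,y)$ extracted from $\sup_x\|A(x)^{-1}\|<\infty$, combined with the local upper bound obtained from continuity of $A$ on compact sets, yielding local equivalence with the Euclidean metric and hence topological compatibility and completeness. The only cosmetic difference is that the paper delegates the metric axioms to a citation (Burago et al., Chapter 2) and phrases completeness via containment of Cauchy sequences in compact sets, whereas you verify the axioms directly and extract the Euclidean limit explicitly; the quantitative skeleton is identical.
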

\begin{proof}
See the appendix.
\end{proof}
For instance taking 
$A=I_{n}$
one recovers the norm 
$d_A(x,y) = \| x-y\|$. 
Using 
$A(x) = V(x)I_{n}$ for real-valued function $V$
means a cost $V(x)$ is assigned for going through each point $x$.
Using a general matrix allows the cost for traveling through each point $x$ to also depend on the direction of the path at the point.
For a function $e:X\to\mathbb{R}$ we let $\|e\|_{Lip(A)}$ be the Lipschitz constant of a function $e:X\to\mathbb{R}$ with respect to the metric $d_A$. When the metric $d_A$ is clear we will just write $\|e\|_{Lip}$.

The collection of Borel probability measures on $X$ is denoted 
$\mathcal{P}(X)$. We denote by $\mu(e)$ the expectation of $e$ under $\mu$. That is, $\mu(e) = \int_{X}e(x)\,\mathrm{d}\mu(x)$. 
For a number $k$ we let $\mathbb{R}_{\geq k}$ be the set $\left\{ x \in \mathbb{R} \mid x \geq k \right\}$. For a probability measure $\mu$ and $p\geq 1$ we write 
$\|V\|_{L^{p}(\mu)} 
= 
\left(\int_{X}\|V(x)\|^{p}\, \mathrm{d}\mu(x)\right)^{1/p}$.
Given a function $V:X\to\mathbb{R}_{\geq 0}$
 the space
$\mathcal{P}_{p,V}(X)$ is defined to be
 all Borel measures $\mu$ on $X$ which can integrate $V^{p}$:
$$\mathcal{P}_{p,V}(X) = 
\left\{ 
  \mu \in \mathcal{P}(X) 
  \, \middle| \, 
  \int_{X}V(x)^{p}\,\mathrm{d}\mu(x) 
  < \infty \right\}$$
Given a Markov kernel $P$, we denote the image of measure $\mu$ under $P$ by $\mu P$. That is, $(\mu P)(A) = \int_{X}P(x,A)\,\mathrm{d}\mu(x)$. For $V : X \to\mathbb{R}_{\geq 1}$, let $\|e\|_{V}=\sup\limits_{x\in X}\tfrac{|e(x)|}{V(x)}$. We say that $V:X\to\mathbb{R}_{\geq 1}$
is a $p$-Lyapunov function for $P$ if $V$ has compact sublevel sets and there exists numbers $\beta\in[0,1),\, K\geq 0$ so that $\left(PV^{p}(x)\right)^{1/p} \leq \beta V(x) + K$ for all $x$. 
A measure $\mu\in\mathcal{P}(X\times X)$ is a coupling of $\mu_1$ and $\mu_2$ if
$\mu(A\times X) =\mu_1(A)$ and $\mu(X\times A) = \mu_2(A)$ for each measurable set $A$.
We define
$\Gamma(\mu_1,\mu_2)$
to be the set of all couplings of $\mu_1$ and $\mu_2$.

Let the Markov kernel $P$ have an explicit representation as 
\begin{equation}\label{explicit-rep}
(Pe)(x)= \int_{\Xi}e(f(x,\xi))\,\mathrm{d}\nu(\xi)
\end{equation}
for a measurable function 
$f:X\times \Xi \to X$
and a probability space
$(\Xi,\Sigma,\nu)$.
In this section we present two separate conditions for the ergodicity of a Markov kernel given in the form (\ref{explicit-rep}). The first, Proposition \ref{contraction-weak}, is weaker and is  used to show convergence of the forward sensitivity system (consisting of the variables $x_n,m_n$). Proposition \ref{contraction-strong} relies on a stronger set of assumptions and is used to establish differentiability of the stationary costs. Both results utilize the following pointwise estimate of Proposition \ref{contraction-ptwise}. 

In this proposition, and throughout the paper, we consider a differentiable function defined on a closed subset $X$ of Euclidean space. In case $X$ is a strict subset of the space, we assume $f$ is the restriction of a function $\overline{f}$ that is defined and differentiable on an open set $U$ containing $X$. In this way there is no ambiguity in defining the derivative of $f$ at each point of $X$.
\begin{prop}\label{contraction-ptwise}
  Let $P$ be of the form (\ref{explicit-rep})
  where 
  \begin{enumerate}
    \renewcommand{\theenumi}{\roman{enumi}}
  \item $x \mapsto f(x,\xi)$ is $C^{1}$ for each $\xi\in \Xi$, \label{differentiability-assumption}
\item \label{contract-sufficient}
$\sup\limits_{x\in X}
\sup\limits_{u \in \mathbb{R}^{n}:\|u\|=1}
\left(\displaystyle\int_{\Xi}
  \|A(f(x,\xi))\tfrac{\partial f}{\partial x}(x,\xi)A^{-1}(x)u\|^{p}
  \,\mathrm{d}\nu(\xi)\right)^{1/p}
\leq
\alpha,
$
\end{enumerate}
for some $\alpha \geq 0$.
Then for any $x_1,x_2 \in X$ we have
  \begin{equation}\label{common-random-numbers-coupling-contraction}
  \left(
    \int_{\Xi}
    d_A\Big( f(x_1,\xi),f(x_2,\xi)\Big)^{p}
    \,\mathrm{d}\nu(\xi)
  \right)^{1/p} \leq \alpha d_A(x_1,x_2).
  \end{equation}
 \end{prop}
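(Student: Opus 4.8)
The plan is to bound the left-hand side by pushing an arbitrary curve forward through the map $f(\cdot,\xi)$ and then using Minkowski's integral inequality to interchange the path integral with the $\nu$-integral. First I would fix $x_1,x_2\in X$ and an arbitrary piecewise $C^{1}$ curve $\gamma:[\gamma_s,\gamma_e]\to X$ joining $x_1$ to $x_2$. For each $\xi\in\Xi$ the image curve $\gamma_\xi(t)=f(\gamma(t),\xi)$ is again a piecewise $C^{1}$ curve in $X$ (using hypothesis (\ref{differentiability-assumption}), that $f(\cdot,\xi)$ is $C^{1}$ and maps into $X$), now joining $f(x_1,\xi)$ to $f(x_2,\xi)$, with derivative $\gamma_\xi'(t)=\frac{\partial f}{\partial x}(\gamma(t),\xi)\gamma'(t)$ on each smooth piece. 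By the definition of $d_A$ as an infimum of lengths, this immediately gives the pointwise bound $d_A(f(x_1,\xi),f(x_2,\xi))\le L(\gamma_\xi)=\int_{\gamma_s}^{\gamma_e}\|A(f(\gamma(t),\xi))\frac{\partial f}{\partial x}(\gamma(t),\xi)\gamma'(t)\|\,\mathrm{d}t$.

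Next I would take the $L^{p}(\nu)$ norm of both sides in $\xi$ and apply Minkowski's integral inequality to move the $t$-integral outside the norm, obtaining
$$\left(\int_\Xi d_A(f(x_1,\xi),f(x_2,\xi))^{p}\,\mathrm{d}\nu\right)^{1/p} \le \int_{\gamma_s}^{\gamma_e}\left(\int_\Xi\Big\|A(f(\gamma(t),\xi))\tfrac{\partial f}{\partial x}(\gamma(t),\xi)\gamma'(t)\Big\|^{p}\,\mathrm{d}\nu(\xi)\right)^{1/p}\mathrm{d}t.$$
I would then estimate the inner $L^{p}(\nu)$ norm for each fixed $t$. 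Writing $x=\gamma(t)$ and $w=\gamma'(t)$, and factoring $w=\|A(x)w\|\,A(x)^{-1}u$ with the unit vector $u=A(x)w/\|A(x)w\|$ (the case $w=0$ being trivial), the integrand becomes $\|A(x)w\|\cdot\|A(f(x,\xi))\frac{\partial f}{\partial x}(x,\xi)A^{-1}(x)u\|$. Pulling the scalar $\|A(x)w\|=\|A(\gamma(t))\gamma'(t)\|$ out of the integral and bounding the remaining factor by the contraction hypothesis (\ref{contract-sufficient}) bounds the inner norm by $\alpha\|A(\gamma(t))\gamma'(t)\|$. Substituting back, the right-hand side is at most $\alpha\int_{\gamma_s}^{\gamma_e}\|A(\gamma(t))\gamma'(t)\|\,\mathrm{d}t=\alpha L(\gamma)$. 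Since the left-hand side does not depend on $\gamma$, I would finish by taking the infimum over all curves $\gamma\in[x_1\leadsto x_2]$, which replaces $\alpha L(\gamma)$ by $\alpha\, d_A(x_1,x_2)$ and gives the claim.

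I expect the main technical point to be the correct interchange of the path integral and the probabilistic integral, which is exactly what Minkowski's integral inequality provides; once that step is in place, the reduction to hypothesis (\ref{contract-sufficient}) via the unit-vector normalization is routine, and the final infimum over curves is immediate because the left side is $\gamma$-independent. A secondary issue worth a short remark is the measurability of $\xi\mapsto d_A(f(x_1,\xi),f(x_2,\xi))$, needed for the left-hand integral to be well defined; this follows from measurability of $\xi\mapsto f(x_i,\xi)$ together with continuity of $d_A$ with respect to the Euclidean topology established in Proposition \ref{conditions-for-metric}. Joint measurability in $(t,\xi)$ of the length integrand, needed to legitimize the application of Minkowski's inequality, follows similarly from continuity of the maps in $x$ and measurability in $\xi$.
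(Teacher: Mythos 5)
Your proposal is correct, and its skeleton matches the paper's: push the curve forward through $f(\cdot,\xi)$, bound $d_A(f(x_1,\xi),f(x_2,\xi))$ by the length of the image curve, reduce to the contraction hypothesis via the normalization $u = A(x)w/\|A(x)w\|$, and pass to the infimum over curves. Where you differ is in the single technical step of interchanging the path integral with the $\nu$-integral: you invoke Minkowski's integral inequality, $\bigl\|\int g(t,\cdot)\,\mathrm{d}t\bigr\|_{L^p(\nu)} \leq \int \|g(t,\cdot)\|_{L^p(\nu)}\,\mathrm{d}t$, whereas the paper parameterizes a near-optimal curve by arc length, applies Jensen's inequality to pull the $p$-th power inside the $t$-integral (picking up a factor $L(\gamma)^{(p-1)/p}$), interchanges the integrals by Tonelli, and then recombines the powers of $L(\gamma)$ at the end. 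Your route buys two small simplifications: you never need the arc-length parameterization (so you can work with a completely arbitrary curve and take the infimum at the very end, avoiding the $\epsilon$-bookkeeping of choosing a near-minimizing curve), and you avoid tracking the exponents $L(\gamma)^{(p-1)/p}$ and $L(\gamma)^{1/p}$. The paper's route is more elementary in that it only uses Jensen and Tonelli rather than the (slightly less standard, though classical) integral form of Minkowski's inequality. Both arguments implicitly require $p \geq 1$, and your closing remarks on measurability --- the Carath\'eodory-type joint measurability of the integrand and the measurability of $\xi \mapsto d_A(f(x_1,\xi),f(x_2,\xi))$ --- address the same point the paper handles when justifying its interchange of integrals.
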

\begin{proof}
  Let $x_1\neq x_2$ be points of $X$, let 
  $\epsilon>0$
  and let 
  $\gamma : [0,T] \to X$
  be a piecewise $C^{1}$ path 
  from $x_1$ to $x_2$ such that 
  $L(\gamma) \leq d_{A}(x_1,x_2) + \epsilon$. 
  We further assume that $\gamma$ is parameterized by arc length.
  For our definition of length this means
  $\|A(\gamma(t))\gamma'(t)\| = 1$
  for all $t$ and that $T = L(\gamma)$.
  Since 
  $t \mapsto f(\gamma(t),\xi)$
  defines a curve from 
  $f(x_1,\xi)$ to $f(x_2,\xi)$ we have
  \begin{align*}
    &\left(\int_{\Xi}
      d_A(f(x_1,\xi),f(x_2,\xi))^{p} 
      \,\mathrm{d}\nu(\xi)
    \right)^{1/p}\\ 
    &\quad\quad\leq 
    \left(\int_{\Xi}
      \left(\int_{0}^{T}
        \|
        A(f(\gamma(t),\xi))
        \tfrac{\partial f}{\partial x}(x,\xi)
        \gamma'(t)\| 
        \,\mathrm{d}t\right)^{p}
      \,\mathrm{d}\nu(\xi)\right)^{1/p}  \\
    &\quad\quad\leq
    L(\gamma)^{(p-1)/p}
    \left(\int_{\Xi}
      \int_{0}^{T}
      \|
      A(f(\gamma(t),\xi))\tfrac{\partial f}{\partial x}(x,\xi)\gamma'(t)
      \|^{p} 
      \,\mathrm{d}t \,\mathrm{d}\nu(\xi)
    \right)^{1/p} 
\end{align*}
In the first step the definition of length was applied.
Then Jensen's inequality was used together with the fact that $L(\gamma) = T$.
Next, note the integrand in the final expectation is of the form 
$(t,\xi) \mapsto g(t,\xi)$
where $g$ is non-negative, continuous in $t$ for each $\xi$, and measurable in $\xi$ for each $t$. Then we may interchange the integrals, yielding
\begin{align*}
    &\quad\quad=
      L(\gamma)^{(p-1)/p}
     \left(
      \int_{0}^{T}
      \int_{\Xi}
      \|A(f(\gamma(t),\xi))
      \tfrac{\partial f}{\partial x}(x,\xi)
      \gamma'(t)\|^{p} 
      \,\mathrm{d}\nu(\xi) \,\mathrm{d}t
    \right)^{1/p}
\end{align*}
Using the identity 
$A(\gamma(t))^{-1}A(\gamma(t))\gamma'(t) = \gamma'(t)$ and the assumption on 
$\tfrac{\partial f}{\partial x}$ we get
\begin{align*}
  &\leq
    L(\gamma)^{(p-1)/p}
     \left(
      \int_{0}^{T}
    \alpha^{p}\|A(\gamma(t))\gamma'(t)\|^{p} 
     \,\mathrm{d}t
    \right)^{1/p}
\end{align*}
Then since $\gamma$ is parameterized by arc length,
\begin{align*}
    &=
    L(\gamma)^{(p-1)/p}\alpha L(\gamma)^{1/p}
    \leq
    \alpha d_A(x_1,x_2) + \alpha\epsilon
  \end{align*}
  As $\epsilon>0$ was arbitrary, the result follows.
\end{proof}
If a tuple 
$\{(\Xi,\Sigma,\nu),f,(\|\cdot\|,A)\}$
satisfies the conditions of Proposition \ref{contraction-ptwise} for some $\alpha < 1$,
we say that a \textit{pointwise $p$-contraction inequality} holds for the process.

Combining this with the assumption that the system carries a Lyapunov function yields the following ergodicity result. 
\begin{prop}\label{contraction-weak}
  Let the assumptions of Proposition \ref{contraction-ptwise} hold for $p\geq 1$ and $\alpha < 1$, and assume  there is a $p$-Lyapunov function $V$ for $P$.
Then $P$ has a unique invariant measure $\pi \in \mathcal{P}_{p,V}(X)$ and for any $\mu \in \mathcal{P}_{p,V}$,
$\sup\limits_{\|e\|_{Lip} + \|e\|_{V} \leq 1}|\mu P^{n}(e) - \pi(e)| \rightarrow 0$
as $n\rightarrow \infty$. In particular, $\mu P^{n}$ converges weakly to $\pi$.
 \end{prop}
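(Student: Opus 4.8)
The plan is to lift the pointwise contraction of Proposition \ref{contraction-ptwise} to a genuine contraction of $P$ on a space of probability measures equipped with the Wasserstein distance $d_{p,A}$, and then to combine this with the drift supplied by the Lyapunov function $V$ to extract the invariant measure and the convergence rate. First I would record the Wasserstein contraction. Given measures $\mu_1,\mu_2$ and any coupling $\gamma\in\Gamma(\mu_1,\mu_2)$, I push the product $\gamma\otimes\nu$ forward through the map $(x_1,x_2,\xi)\mapsto(f(x_1,\xi),f(x_2,\xi))$ to obtain a coupling of $\mu_1 P$ and $\mu_2 P$ (the common-random-numbers coupling). Applying Tonelli to carry out the $\xi$-integral first, and then the pointwise estimate (\ref{common-random-numbers-coupling-contraction}), gives $\int_{X\times X}\int_{\Xi}d_A(f(x_1,\xi),f(x_2,\xi))^{p}\,\mathrm{d}\nu(\xi)\,\mathrm{d}\gamma \le \alpha^{p}\int_{X\times X}d_A(x_1,x_2)^{p}\,\mathrm{d}\gamma$; taking the infimum over $\gamma$ yields $d_{p,A}(\mu_1 P,\mu_2 P)\le\alpha\,d_{p,A}(\mu_1,\mu_2)$.

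Next I would use the drift to control moments and produce the fixed point. Iterating $(PV^{p})^{1/p}\le\beta V+K$ gives $(P^{n}V^{p})^{1/p}(x_0)\le\beta^{n}V(x_0)+K/(1-\beta)$, so the orbit $\delta_{x_0}P^{n}$ has uniformly bounded $V^{p}$-moments. I would work in the space of measures with finite $p$-th $d_A$-moment, which is complete because $(X,d_A)$ is complete by Proposition \ref{conditions-for-metric}. On this space the contraction makes the orbit Cauchy, since $d_{p,A}(\delta_{x_0}P^{n+1},\delta_{x_0}P^{n})\le\alpha^{n}d_{p,A}(\delta_{x_0}P,\delta_{x_0})$, once one checks the one-step distance is finite, i.e. $\int_{\Xi}d_A(f(x_0,\xi),x_0)^{p}\,\mathrm{d}\nu(\xi)<\infty$. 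The Cauchy limit $\pi$ is invariant because the contraction makes $\mu\mapsto\mu P$ continuous for $d_{p,A}$; it lies in $\mathcal{P}_{p,V}(X)$ by lower semicontinuity of $\int V^{p}$ together with the uniform moment bound; and it is unique, since any invariant $\tilde\pi$ of finite $d_A$-moment satisfies $d_{p,A}(\tilde\pi,\pi)=d_{p,A}(\tilde\pi P^{n},\pi P^{n})\le\alpha^{n}d_{p,A}(\tilde\pi,\pi)$, forcing $\tilde\pi=\pi$.

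For the convergence statement, the contraction immediately gives $d_{p,A}(\mu P^{n},\pi)=d_{p,A}(\mu P^{n},\pi P^{n})\le\alpha^{n}d_{p,A}(\mu,\pi)\to 0$ for any $\mu$ in the moment class. It then remains to convert this into $\sup_{\|e\|_{Lip}+\|e\|_{V}\le 1}|\mu P^{n}(e)-\pi(e)|\to 0$. The Lipschitz contribution follows from Kantorovich--Rubinstein duality and $d_{1,A}\le d_{p,A}$ (Jensen, using $p\ge 1$). The $\|e\|_{V}$ contribution is the delicate part, because such $e$ need not be bounded: here I would split $e$ into a truncated bounded Lipschitz piece, controlled by $d_{p,A}$, and a tail on $\{V>R\}$, controlled using the uniform $V^{p}$-moment bound along the orbit to bound $\sup_n\int_{\{V>R\}}V\,\mathrm{d}(\mu P^{n})$ and let $R\to\infty$. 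Weak convergence is then a corollary, since any bounded Lipschitz function, after scaling, lies in the test class and these are convergence-determining.

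I expect the \emph{main obstacle} to be this last upgrade: passing from $d_{p,A}$-convergence to convergence against the unbounded, $V$-weighted Lipschitz test functions does not follow formally from Wasserstein convergence and genuinely requires the uniform-integrability estimate above, which is why the Lyapunov moment bound is needed beyond merely producing tightness. A secondary, more bookkeeping-level subtlety is ensuring that every $d_{p,A}$ distance appearing in the argument is finite; this rests on the one-step moment bound $\int_{\Xi}d_A(f(x_0,\xi),x_0)^{p}\,\mathrm{d}\nu(\xi)<\infty$ and on the measures of $\mathcal{P}_{p,V}(X)$ having finite $p$-th $d_A$-moment, so that the contraction actually applies to them — the point at which the compatibility of the weight $V$ with the metric $d_A$ must be used.
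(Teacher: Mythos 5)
There is a genuine gap, and it sits exactly where you flagged ``compatibility of the weight $V$ with the metric $d_A$.'' Your construction lives in the Wasserstein space $(\mathcal{P}_{p,A},d_{p,A})$ and needs two facts that are not among the hypotheses of Proposition \ref{contraction-weak}: (a) the one-step moment bound $\int_{\Xi}d_A(f(x_0,\xi),x_0)^{p}\,\mathrm{d}\nu(\xi)<\infty$, so that the orbit of $\delta_{x_0}$ stays in $\mathcal{P}_{p,A}$ and the fixed-point iteration can start; and (b) the inclusion $\mathcal{P}_{p,V}\subseteq\mathcal{P}_{p,A}$, i.e.\ $d_A(x_0,\cdot)\lesssim V$, so that the contraction applies to the measures $\mu$ named in the statement and uniqueness is obtained in $\mathcal{P}_{p,V}$ rather than merely among measures of finite $d_A$-moment. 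Neither is available: Proposition \ref{contraction-ptwise} assumes only smoothness plus the averaged contraction of $\tfrac{\partial f}{\partial x}$, which controls $d_A(f(x,\xi),f(y,\xi))$ but says nothing about $d_A(f(x_0,\xi),x_0)$; and the $p$-Lyapunov function $V$ here is an arbitrary function with compact sublevel sets and a drift inequality --- nothing ties its growth to $d_A$. This is not a corner case: the proposition is applied in Corollary \ref{ergodicity-sensitivity} with the metric $d_H$ of Proposition \ref{joint-metric}, whose weight $1+\eta_4h(z)$ makes $d_H(z_0,\cdot)$ grow quadratically in $d_A(x_0,x)$, while the Lyapunov function $V(z)=\eta_1\|A(x)m\|+\eta_2\|B(x)\|+\eta_3d_A(x_0,x)+1$ grows only linearly; hence $\mathcal{P}_{1,V}\not\subseteq\mathcal{P}_{1,H}$ and your contraction step cannot even be invoked for the measures in question. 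What you have written is essentially the paper's proof of Proposition \ref{contraction-strong}, which is precisely the case where $V=1+d_A(\cdot,x_0)$ is assumed to be a Lyapunov function.

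The paper's proof of the weak version avoids Wasserstein distances altogether. Existence of $\pi$ and the membership $\pi\in\mathcal{P}_{p,V}$ are quoted from \cite{hairernotes} (a Krylov--Bogolyubov-type argument driven by the Lyapunov function). For convergence, one takes an \emph{arbitrary} coupling $\gamma$ of $\mu$ and $\pi$ --- no optimality and no finiteness of transport cost is needed --- and combines the iterated contraction bound with the iterated drift bound into
\[
|\mu P^{n}(e)-\pi(e)|\;\leq\;(\|e\|_{Lip}+\|e\|_{V})\int_{X\times X}\min\bigl\{\alpha^{n}d_A(x,y),\;\beta^{n}[V(x)+V(y)]+K'\bigr\}\,\mathrm{d}\gamma(x,y).
\]
The first entry of the minimum tends to $0$ pointwise for every fixed pair $(x,y)$ because $d_A(x,y)$ is a finite number, even when $d_A$ is not $\gamma$-integrable; the second entry supplies the $\gamma$-integrable dominating function $V(x)+V(y)+K'$, since $\mu,\pi\in\mathcal{P}_{p,V}$ and $V\geq 1$, $p\geq 1$. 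Dominated convergence then yields the uniform convergence claim, and in particular uniqueness within $\mathcal{P}_{p,V}$, with no integrability relation between $V$ and $d_A$ ever used. To salvage your route you would have to add hypotheses (a) and (b), but that recovers Proposition \ref{contraction-strong}, not Proposition \ref{contraction-weak}; under the stated hypotheses you should replace the fixed-point construction with this coupling-plus-domination argument.
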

\begin{proof}
The existence of a unique invariant measure $\pi$ is an immediate result of Corollary 4.23 and Theorem 4.25 of \cite{hairernotes}. To show that $\pi \in \mathcal{P}_{p,V}$, reason as follows. If $V$ is a $p$-Lyapunov function, then $V^{p}$ is a $1$-Lyapunov function (for possibly different values of the constants $\beta$ and $K$). Then apply Proposition 4.24 of \cite{hairernotes}.

  We turn to convergence of the expectations $\mu P^{n}(e)$ as $n\rightarrow \infty$. Let $e$ have $\|e\|_{Lip} + \|e\|_{V}< \infty$.  Using 
  (\ref{common-random-numbers-coupling-contraction}) we see 
  $\|Pe\|_{Lip} \leq \alpha\|e\|_{Lip}$ and by iterating the inequality we see 
  \begin{equation}\label{iterate-contraction}
    |P^{n}e(x) - P^{n}e(y)| \leq \alpha^{n}\|e\|_{Lip}d_A(x,y)
  \end{equation}
  By iterating the Lyapunov inequality, we see
  \begin{equation}\label{iterate-lyap}
    |P^{n}e(x) - P^{n}e(y)| \leq \|e\|_{V}\beta^{n}[V(x) + V(y)] + \|e\|_{V}K'
  \end{equation}
  where $K'=2K/(1-\beta)$.
  Combining (\ref{iterate-contraction}) and (\ref{iterate-lyap}), for any coupling $\gamma$ of $\mu$ and $\pi$,
  $$
  |\mu P^{n}(e) - \pi(e)| 
  \leq
  (\|e\|_{Lip}+\|e\|_{V})
  \int_{X\times X}
  \min
  \{\alpha^{n}d_A(x,y),\beta^{n}[V(x)+V(y)] + K'\}
  \,\mathrm{d}\gamma(x,y)
  $$
  It remains to show that right hand side of this inequality tends to $0$ as 
  $n\rightarrow \infty$.
  Letting 
  $f_{n}(x,y)=\min\{\alpha^{n}d_A(x,y),\beta^{n}[V(x)+V(y)] + K'\}$,
  it is clear the pointwise convergence of $f_n$ to $0$ holds. Since also 
  $|f_n| \leq V(x) + V(y) + K'$, the latter function being $\gamma$-integrable,
  the result follows by the dominated convergence theorem.
\end{proof}
Let $x_0$ be an arbitrary basepoint in $X$. 
The next result strengthens the conclusion in case  
$V(x) = 1 + d_{A}(x_0,x)$, 
and concerns contraction in the Wasserstein space 
$\mathcal{P}_{p,A}$. This is the set of all measures that can integrate 
$x \mapsto d_{A}(x_0,x)^p$, together with metric
$$
d_{p,A}(\mu,\nu)
 =
\inf_{\gamma\in\Gamma(\mu,\nu)}
  \left(\int_{X\times X}d_{A}(x,y)^{p}\,\mathrm{d}\gamma(x,y)\right)^{1/p}.
$$
The space $\mathcal{P}_{p,A}$ is complete if $(X,d_A)$ is. 
Furthermore, 
the Kantorovich duality formula holds for $p=1$:
\begin{equation}\label{eqn-df}
\sup_{\|e\|_{Lip}\leq 1}|\mu_1(e) - \mu_2(e)| = d_{1,A}(\mu_1,\mu_2)
\end{equation}
See \cite{villani2008} for more background.
\begin{prop}\label{contraction-strong}
Let the assumptions of Proposition \ref{contraction-ptwise} hold for some $p\geq 1$ and $\alpha < 1$. Let $V(x) = 1 + d_{A}(x,x_0)$ be a $p$-Lyapunov function for the kernel $P$.
Then $P$ determines a contraction mapping on the Wasserstein space $\mathcal{P}_{p,A}(X)$ and possesses a unique invariant measure $\pi \in \mathcal{P}_{p,A}$. Furthermore, if $\mu \in \mathcal{P}_{p,V}$,
\begin{equation}\label{eqn-cr}
\sup\limits_{\|e\|_{Lip} \leq 1}|\mu P^{n}(e) - \pi(e)| \leq \alpha^{n}\sup_{\|e\|_{Lip} \leq 1}|\mu(e) - \pi(e)|.
\end{equation}
 \end{prop}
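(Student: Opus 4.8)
The plan is to exhibit $P$ as an $\alpha$-contraction on the complete metric space $(\mathcal{P}_{p,A},d_{p,A})$, invoke the Banach fixed-point theorem for existence and uniqueness of $\pi$, and then recover the rate (\ref{eqn-cr}) through Kantorovich duality. The workhorse throughout is the common-random-numbers coupling: given $\mu_1,\mu_2\in\mathcal{P}_{p,A}$ and any $\gamma\in\Gamma(\mu_1,\mu_2)$, I would form the law of $\big(f(x_1,\xi),f(x_2,\xi)\big)$ where $(x_1,x_2)\sim\gamma$ and $\xi\sim\nu$ are drawn independently. By the explicit representation (\ref{explicit-rep}) its two marginals are exactly $\mu_1 P$ and $\mu_2 P$, so this is a valid coupling of $\mu_1 P$ and $\mu_2 P$, and the pointwise estimate (\ref{common-random-numbers-coupling-contraction}) of Proposition \ref{contraction-ptwise} controls its cost.

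First I would check that $P$ preserves $\mathcal{P}_{p,A}$. Since $\mathcal{P}_{p,A}=\mathcal{P}_{p,V}$ for $V(x)=1+d_A(x,x_0)$, the Lyapunov bound gives
$$\int_X V^p\,\mathrm{d}(\mu P)=\int_X PV^p\,\mathrm{d}\mu\leq\int_X(\beta V+K)^p\,\mathrm{d}\mu<\infty,$$
so $\mu P\in\mathcal{P}_{p,A}$ whenever $\mu\in\mathcal{P}_{p,A}$. Next, bounding $d_{p,A}(\mu_1 P,\mu_2 P)^p$ by the cost of the coupling above and applying (\ref{common-random-numbers-coupling-contraction}),
$$d_{p,A}(\mu_1 P,\mu_2 P)^p\leq\int_{X\times X}\int_{\Xi}d_A\big(f(x_1,\xi),f(x_2,\xi)\big)^p\,\mathrm{d}\nu(\xi)\,\mathrm{d}\gamma\leq\alpha^p\int_{X\times X}d_A(x_1,x_2)^p\,\mathrm{d}\gamma.$$
Minimizing over $\gamma$ (or passing through $\epsilon$-optimal couplings and letting $\epsilon\to0$) yields $d_{p,A}(\mu_1 P,\mu_2 P)\leq\alpha\,d_{p,A}(\mu_1,\mu_2)$. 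As $(X,d_A)$ is complete by Proposition \ref{conditions-for-metric}, the space $\mathcal{P}_{p,A}$ is complete, and the Banach fixed-point theorem delivers the unique fixed point $\pi\in\mathcal{P}_{p,A}$, which is the asserted invariant measure.

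For the rate (\ref{eqn-cr}) the key observation is that Jensen's inequality applied to (\ref{common-random-numbers-coupling-contraction}), with $\nu$ a probability measure and $p\geq1$, upgrades the $p$-estimate to a pointwise $1$-contraction $\int_{\Xi}d_A\big(f(x_1,\xi),f(x_2,\xi)\big)\,\mathrm{d}\nu\leq\alpha\,d_A(x_1,x_2)$. Repeating the coupling computation with exponent $1$ then shows $P$ is equally an $\alpha$-contraction for $d_{1,A}$. Since $\mu\in\mathcal{P}_{p,V}\subseteq\mathcal{P}_{1,A}$ forces $d_{1,A}(\mu,\pi)<\infty$, invariance $\pi=\pi P^n$ gives $d_{1,A}(\mu P^n,\pi)=d_{1,A}(\mu P^n,\pi P^n)\leq\alpha^n d_{1,A}(\mu,\pi)$, and translating both ends through the Kantorovich duality (\ref{eqn-df}) produces exactly (\ref{eqn-cr}).

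The main obstacles I anticipate are twofold. The routine-but-necessary one is justifying that the pushed-forward coupling is a bona fide Borel measure with the correct marginals, which requires joint measurability of $(x_1,x_2,\xi)\mapsto\big(f(x_1,\xi),f(x_2,\xi)\big)$ together with Tonelli, and finiteness of the relevant moment integrals so that $d_{p,A}$ stays finite on $\mathcal{P}_{p,A}$. The more conceptual point is that the $d_{1,A}$-rate does not follow from the $d_{p,A}$-contraction alone: the naive comparison $d_{1,A}\leq d_{p,A}$ would leave $\alpha^n d_{p,A}(\mu,\pi)$ on the right-hand side rather than the sharper $\alpha^n d_{1,A}(\mu,\pi)$ claimed. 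Obtaining the stated form genuinely requires the separate $d_{1,A}$-contraction extracted via Jensen, combined with Kantorovich duality to rewrite $\sup_{\|e\|_{Lip}\leq1}|\,\cdot\,|$ as $d_{1,A}$.
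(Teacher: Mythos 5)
Your proposal is correct and follows essentially the same route as the paper's proof: the common-random-numbers coupling pushed through an arbitrary coupling $\gamma$, the pointwise estimate (\ref{common-random-numbers-coupling-contraction}) yielding $d_{p,A}(\mu_1 P,\mu_2 P)\leq\alpha\, d_{p,A}(\mu_1,\mu_2)$, completeness of $\mathcal{P}_{p,A}$ plus the Banach fixed-point theorem for existence and uniqueness of $\pi$, and Kantorovich duality for (\ref{eqn-cr}). One remark worth making: the paper dispatches (\ref{eqn-cr}) with the single sentence that it ``results by combining the contraction property with the duality formula,'' whereas you correctly identify that for $p>1$ this requires an extra step --- using Jensen to downgrade the pointwise $p$-estimate to a pointwise $1$-contraction and rerunning the coupling argument in $d_{1,A}$ --- since duality (\ref{eqn-df}) is stated only for $d_{1,A}$, and the naive comparison $d_{1,A}\leq d_{p,A}$ would leave $\alpha^{n}d_{p,A}(\mu,\pi)$ rather than the claimed $\alpha^{n}d_{1,A}(\mu,\pi)$ on the right-hand side. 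Your write-up (together with the routine verification, also omitted in the paper, that $P$ maps $\mathcal{P}_{p,A}$ into itself via the Lyapunov bound) makes explicit precisely what the paper's final sentence leaves implicit.
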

\begin{proof}
  Let $\gamma$ be any coupling in $\Gamma(\mu_1,\mu_2)$.
  For any points $x,y$ of $X$ we can form a coupling of $\delta_{x}P$ and $\delta_{y} P$ using common random numbers. Formally, this is the measure $C(x,y)$ which arises as the pushforward of $\nu$ under the map $\xi  \mapsto (f(x,\xi),f(y,\xi))$. Then $C$ is a well-defined Markov kernel on $X\times X$, and according to Proposition \ref{contraction-ptwise},
  $$
  \left(\int_{X\times X}d_{A}(x',y')^{p}\,\mathrm{d}(\delta_{(x,y)}C)(x',y')\right)^{1/p} 
  \leq
  \alpha d_{A}(x,y)$$
  Then
  \begin{align*}
    d_{p,A}(\mu_1 P,\mu_2 P)
    &\leq
    \left(\int_{X\times X}d_A(x,y)^{p}\,\mathrm{d}(\gamma C)(x,y)\right)^{1/p} 
    \\&
    \leq
    \alpha \left(\int_{X\times X}d_A(x,y)^{p}\,\mathrm{d}\gamma(x,y)\right)^{1/p}
  \end{align*}
  Since $\gamma$ was arbitrary, it follows that $P$ is a contraction. Since $\mathcal{P}_{p,A}$ is complete, $P$ has a unique stationary measure $\pi$ in $\mathcal{P}_{p,A}$. Inequality (\ref{eqn-cr}) results by combining the contraction property with the duality formula (\ref{eqn-df}).
\end{proof}

Conditions similar to those used in Proposition \ref{contraction-ptwise}
have been mentioned in other works.
The work of \cite{steinsaltz} considered the case of a scalar potential 
$A(x) = V(x)I$. 
The metric viewpoint for the scalar potential 
can be found in 
\cite{hairer2008,stenflo}. The results of \cite{borovkov} may be helpful to find scalar weight functions. The contraction conditions were also motivated by work on contraction analysis for deterministic systems \cite{lohmiller, forni}.

Aside from generality,
there is a reason
related to gradient estimation
for considering matrix-valued functions $A$. 
Even if the underlying system has the unweighted average contraction property,
meaning inequality (\ref{contract-sufficient}) of Proposition \ref{contraction-ptwise} holds 
with the function $A(x)= I$, 
this does not extend to the joint system
(Eqns. \ref{optproc1}, \ref{optproc2}).
This is due to the factor
$m$ in the auxiliary system (\ref{optproc2}),
which makes the Jacobian 
$\frac{\partial T}{\partial z}$
 large at points $(x,m)$ where $\|m\|$ is large.
\iffalse
In the deterministic case there are two ways to deal with this.
With the strong contraction condition 
$\sup_{x\in X}\|\tfrac{\partial f}{\partial x}\| < 1$, the variable $m$ can be guaranteed to stay in a bounded set around the origin and the contraction analysis goes through if $m$ is initialized to a small enough value. 
An illustration of this approach can be found in \cite{flynn_mtns}.
Alternatively, one can introduce a scalar potential 
$A(x)=V(x)I$
to analyze the joint system.
An issue with the first approach is that any 
interesting probabilistic criteria must allow for the possibility of $\frac{\partial f}{\partial x}$ being large in some places, so $m$ can not be confined to a bounded set.
For the second possibility, it is not clear what function $V$ should be used. One choice that can be used (in the deterministic setting) is $V(z) = \exp(\eta\|m\|)$ for some $\eta>0$, but this is restrictive in the stochastic case as it would implicate certain exponential moments associated to $f$.

Another approach\fi
One approach is to look beyond the scalar potentials
to metrics that weigh the $x$ and $m$ directions differently.
We will see in Section \ref{sect:gradient-estimation} that, for the case of unweighted contraction,
a suitable metric involves a matrix 
$H(x,m)(u_x,u_m) = \Big( (1+h(x,m))u_x,u_m\Big)$
for a scalar function 
$h(x,m)$.

\subsection{Interconnections of contractions}
\label{sect:interconnection-of-contractions}
This section gives conditions for the interconnection of two contracting systems to again be contracting. 
It is relevant to gradient estimation since the system
(\ref{optproc1}, \ref{optproc2})
has a hierarchical form,
the underlying system $x$ feeding into the system $m$. 
Interconnection theorems for contracting systems
hold in other dynamical settings as well;
results for deterministic continuous time systems can be found in \cite{russo, Simpson}. 

Let $X \subseteq \mathbb{R}^{n}$, $Y \subseteq \mathbb{R}^{m}$ be closed, convex sets, and let $Z = X\times Y$. For instance, when these results are applied later to the forward sensitivity process, the space $Y$ will be $L(\mathbb{R}^{n_{X}},\mathbb{R}^{n_{\Theta}})$. Let $(\Xi,\Sigma,\nu)$ be a probability space and let $R$ be the Markov kernel that corresponds to following stochastic recursion on $Z$:
\begin{align*}
x_{n+1} &= f(x_{n},y_{n},\xi_{n+1})\\
y_{n+1} &= g(x_{n},y_{n},\xi_{n+1})
\end{align*}
where the $\xi_{n}$  are independent $\nu$-distributed random variables. For measurable $\phi:Z\to\mathbb{R}$, one has
$(R\phi)(x,y) = \int_{\Xi}\phi( T(x,y,\xi))\,\mathrm{d}\nu(\xi)$ where $T(x,y,\xi) = (f(x,y,\xi),g(x,y,\xi))$. We find conditions on $f$ and $g$ that guarantee the joint system is contracting. 

\begin{asu}\label{asu:two-contractions}
  Regarding the functions $f,g$ and the probability space $(\Xi,\Sigma,\nu)$,
\begin{enumerate}
  \renewcommand{\theenumi}{\roman{enumi}}
  \item 
    The maps $(x,y) \mapsto f(x,y,\xi)$ and $(x,y) \mapsto g(x,y,\xi)$ are $C^{1}$ for each $\xi \in \Xi$,
  \item
    There are pairs $(\|\cdot\|_{X},F)$, $(\|\cdot\|_Y,G)$, such that $\|\cdot\|_{X}, \|\cdot\|_{Y}$ are norms on $\mathbb{R}^{n},\mathbb{R}^{m}$ respectively, $F:X\times Y \to \mathbb{R}^{n\times n}$ and $G:X\times Y \to \mathbb{R}^{m\times m}$ are continuous with values in the invertible matrices, and $\sup_{(x,y) \in X\times Y}\|F(x,y)^{-1}\|_{X} + \|G(x,y)^{-1}\|_{Y} < \infty$,
  \item
    There are $\alpha_1$ and $\alpha_2$, both in $[0,1)$, such that
    \begin{align*}
    \sup_{z \in Z}
    \sup_{u \in \mathbb{R}^{n}:\|u\|_{X}=1}&
    \left(\int_{\Xi}
      \|F(T(z,\xi))\tfrac{\partial f}{\partial x}(z,\xi)F^{-1}(z)u\|_{X}^{p}
      \,\mathrm{d}\nu(\xi)\right)^{1/p}
    \leq
    \alpha_1,
 \\
    \sup_{z \in Z}
    \sup_{u \in \mathbb{R}^{m}:\|u\|_{Y}=1}&
    \left(\int_{\Xi}
      \|G(T(z,\xi))\tfrac{\partial g}{\partial y}(z,\xi)G^{-1}(z)u\|_{Y}^{p}
      \,\mathrm{d}\nu(\xi)\right)^{1/p}
    \leq
    \alpha_2.
\end{align*}
\label{comp-cont-cond}
  \end{enumerate}
\end{asu}
We are concerned with pointwise contraction as in Proposition \ref{contraction-ptwise}. With further integrability assumptions, convergence to a unique stationary measure can be obtained with results of the previous section.
\begin{prop}\label{finsler-hierarchy}
  Let Assumption \ref{asu:two-contractions} hold.
  Let
  $K_1,K_2,$ and $p\geq 1$ be such that
\begin{enumerate}
   \renewcommand{\theenumi}{\roman{enumi}}
\item  $
  \sup\limits_{z\in Z}\sup\limits_{\|u_y\|_{Y} = 1}
  \displaystyle\left(\int_{\Xi}
  \|F(T(z,\xi))\tfrac{\partial f}{\partial y}(z,\xi)G(z)^{-1}u_y\|_{Y}^{p}
  \,\mathrm{d}\nu(\xi)\right)^{1/p}
  \leq K_1
  $,
\item $
  \sup\limits_{z\in Z}\sup\limits_{\|u_x\|_{X} = 1}
  \displaystyle\left(\int_{\Xi}
  \|G(T(z,\xi))\tfrac{\partial g}{\partial x}(z,\xi)F(z)^{-1}u_x\|_{X}^{p}
  \,\mathrm{d}\nu(\xi)\right)^{1/p}
  \leq K_2
  $,
\item $K_1K_2 < (1-\alpha_1)(1-\alpha_2)$.
\end{enumerate}
  Choose $\eta_1,\eta_2$ so that
  $\eta_2K_2 < \eta_1(1-\alpha_1)$
  and 
  $\eta_1K_1 < \eta_2(1-\alpha_2)$.
  Then a pointwise $p$-contraction inequality holds for the system
  $\{ (\Xi,\Sigma,\nu), T, (\|\cdot\|_{Z}, H)\}$ on
  $Z$ where
\begin{subequations}
\begin{flalign}
\label{hierp1}
&H(z)(u_x,u_y) = (F(z)u_x,\, G(z)u_y) \\
\label{hierp2}
&\|(u_x,u_y)\|_{Z} = \eta_1\|u_x\|_{X} + \eta_2\|u_y\|_{Y}
\end{flalign}
\end{subequations}
\end{prop}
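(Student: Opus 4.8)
The plan is to verify the hypotheses of Proposition \ref{contraction-ptwise} for the tuple $\{(\Xi,\Sigma,\nu),T,(\|\cdot\|_Z,H)\}$ with a contraction constant $\alpha<1$. Hypothesis (\ref{differentiability-assumption}), that $(x,y)\mapsto T(x,y,\xi)$ is $C^{1}$, is immediate from Assumption \ref{asu:two-contractions}(i), and the continuity, invertibility and uniform bound on $H^{-1}$ that make $d_H$ a genuine metric follow from the corresponding properties of $F,G$ in Assumption \ref{asu:two-contractions}(ii); so the real content is the averaged operator bound (\ref{contract-sufficient}). Since $T=(f,g)$, its Jacobian in $z=(x,y)$ is the block matrix with rows $(\tfrac{\partial f}{\partial x},\tfrac{\partial f}{\partial y})$ and $(\tfrac{\partial g}{\partial x},\tfrac{\partial g}{\partial y})$, and because $H(z)$ acts block-diagonally as $(u_x,u_y)\mapsto(F(z)u_x,G(z)u_y)$, the conjugated map $H(T)\tfrac{\partial T}{\partial z}H(z)^{-1}$ has the four blocks $F(T)\tfrac{\partial f}{\partial x}F(z)^{-1}$, $F(T)\tfrac{\partial f}{\partial y}G(z)^{-1}$, $G(T)\tfrac{\partial g}{\partial x}F(z)^{-1}$ and $G(T)\tfrac{\partial g}{\partial y}G(z)^{-1}$. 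These are exactly the operators whose $L^{p}(\nu)$ bounds are $\alpha_1,K_1,K_2,\alpha_2$ by Assumption \ref{asu:two-contractions}(iii) and hypotheses (i),(ii) of the present statement.

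Next I would fix $z$ and an arbitrary $u=(u_x,u_y)$, apply the conjugated map, and expand the target norm as $\eta_1\|v_x\|_{X}+\eta_2\|v_y\|_{Y}$, where the components $v_x,v_y$ are each the sum of a diagonal and an off-diagonal block applied to $u_x$ and $u_y$. Taking the $L^{p}(\nu)$-norm and applying Minkowski's inequality twice — first to separate the $\eta_1$ and $\eta_2$ parts, then within each part to separate the diagonal term from the off-diagonal term — reduces the estimate to the four single-block integrals, each of which is controlled by its supremum constant times $\|u_x\|_X$ or $\|u_y\|_Y$ by homogeneity. Collecting terms yields
\[
\left(\int_{\Xi}\|H(T)\tfrac{\partial T}{\partial z}H(z)^{-1}u\|_Z^{p}\,\mathrm{d}\nu\right)^{1/p}\leq(\eta_1\alpha_1+\eta_2 K_2)\|u_x\|_X+(\eta_1 K_1+\eta_2\alpha_2)\|u_y\|_Y.
\]

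To finish I would take $\alpha=\max\{\alpha_1+\tfrac{\eta_2}{\eta_1}K_2,\ \alpha_2+\tfrac{\eta_1}{\eta_2}K_1\}$, so that the right-hand side above is bounded by $\alpha(\eta_1\|u_x\|_X+\eta_2\|u_y\|_Y)=\alpha\|u\|_Z$; taking the supremum over $z$ and over unit $u$ then gives hypothesis (\ref{contract-sufficient}). The two defining inequalities $\eta_2 K_2<\eta_1(1-\alpha_1)$ and $\eta_1 K_1<\eta_2(1-\alpha_2)$ are precisely what forces each of the two arguments of the maximum to be strictly less than $1$, hence $\alpha<1$. That a valid pair $\eta_1,\eta_2$ exists is the final point: rewriting the two inequalities as $\tfrac{K_2}{1-\alpha_1}<\tfrac{\eta_1}{\eta_2}<\tfrac{1-\alpha_2}{K_1}$, this interval for the ratio is nonempty exactly when $K_1K_2<(1-\alpha_1)(1-\alpha_2)$, which is hypothesis (iii).

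The computation is largely bookkeeping, and the single place that needs care is the repeated use of Minkowski's inequality to move the $L^{p}(\nu)$-norm through the additive structure of $\|\cdot\|_Z$ and through the block decomposition of each component. The conceptual mechanism — what actually makes the proposition true rather than a mere rearrangement — is the $\eta$-weighting, which converts the four entrywise bounds $\alpha_1,\alpha_2,K_1,K_2$ into a single strict contraction constant, with the small-gain condition (iii) serving exactly as the solvability criterion for the weights.
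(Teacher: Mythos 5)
Your proposal is correct and follows essentially the same route as the paper's proof: verify the hypotheses of Proposition \ref{contraction-ptwise} for the block-diagonal $H$ and weighted norm $\|\cdot\|_{Z}$, bound the conjugated Jacobian via Minkowski's inequality block by block to get $(\eta_1\alpha_1+\eta_2K_2)\|u_x\|_X+(\eta_1K_1+\eta_2\alpha_2)\|u_y\|_Y$, and absorb this into $\alpha=\max\{\alpha_1+\tfrac{\eta_2}{\eta_1}K_2,\ \alpha_2+\tfrac{\eta_1}{\eta_2}K_1\}<1$, with the small-gain condition (iii) guaranteeing admissible weights exist. If anything, your write-up is slightly more explicit than the paper's (which leaves the Minkowski steps and the final $\alpha<1$ verification implicit, and in fact contains a typo, writing $K_2$ where $K_1$ belongs in the last displayed maximum).
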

\begin{proof}
  We will apply 
  Proposition \ref{contraction-ptwise}.
  We must find an $\alpha <1$ so that
  $$
  \sup_{z\in Z}
  \sup_{u \in \mathbb{R}^{n}\times\mathbb{R}^{m}:\|u\|_Z=1}
  \left(\int_{\Xi}
  \|
  H(T(z,\xi))
  \tfrac{\partial T}{\partial z}(z,\xi)
  H(z)^{-1}
  u
  \|_{Z}^{p}
  \,\mathrm{d}\nu(\xi)\right)^{1/p}
  \leq
  \alpha.
  $$
  Let $z\in Z$ and let $u = (u_x,u_y)$ be any vector with
  $\eta_1\|u_x\|_{X} + \eta_2\|u_y\|_{Y} = 1$. Then
\begin{align*}
  &\left(\int_{\Xi}
  \|
  H(T(z,\xi))\tfrac{\partial T}{\partial z}(z,\xi)H(z)^{-1}
  u
  \|_{Z}^{p}
  \,\mathrm{d}\nu(\xi)\right)^{1/p} \\ 
  &\quad=
  \bigg(\int_{\Xi}\Big[
  \eta_1\|F(T(z,\xi))\tfrac{\partial f}{\partial x}(z,\xi)F(z)^{-1}u_x +
  F(T(z,\xi))\tfrac{\partial f}{\partial y}(z,\xi)G(z)^{-1}u_y\|_{X} \\
  &\quad\quad+
  \eta_2\|G(T(z,\xi))\tfrac{\partial g}{\partial x}(x,\xi)F(z)^{-1}u_x +
  G(T(z,\xi))
  \tfrac{\partial g}{\partial y}(x,\xi)G(z)^{-1}u_y\|_{Y}\Big]^{p}
  \,\mathrm{d}\nu(\xi)\bigg)^{1/p}
  \\&\quad\leq
  \eta_1\alpha_1\|u_x\|_{X} + \eta_1K_1\|u_y\|_{Y} +
  \eta_2 K_2\|u_x\|_{X}  + \eta_2\alpha_2\|u_y\|_{Y} \\
  &\quad
  \leq
  \max
  \left\{
    \alpha_1 + \tfrac{\eta_2}{\eta_1}K_2,
    \alpha_2 + \tfrac{\eta_1}{\eta_2}K_1
  \right\}
\end{align*}
Finally, note that satisfiability of the condition $\max\{\alpha_1 + \frac{\eta_2}{\eta_1}K_2 ,\alpha_2 + \frac{\eta_1}{\eta_2}K_2\} < 1$ is equivalent to the condition $K_{1}K_{2} < (1-\alpha_1)(1-\alpha_2)$.
\end{proof}
The above can be specialized 
to hierarchical interconnections:
\begin{cor}\label{finsler-hierarchy-interconn}
  Let Assumption \ref{asu:two-contractions} hold.
  Say that  $f$ does not depend on $Y$ 
  ($\frac{\partial f}{\partial y} = 0$).
  Let $K$
  be such that
  \begin{equation}\label{bounded-interconnection-derivative}
  \sup_{z\in Z}\sup_{\|u_x\|_{X} = 1}\left(\int_{\Xi}
  \|
  G(T(z,\xi))
  \tfrac{\partial g}{\partial x}(z,\xi)
  F(z)^{-1}
  u_x
  \|_{Y}^{p}
  \,\mathrm{d}\nu(\xi)\right)^{1/p} 
  \leq K.
  \end{equation}
  Choose $\eta_1,\eta_2$ so that
  $\eta_2K < \eta_1(1-\alpha_1)$.
  Then a pointwise $p$-contraction property holds for the system
  $\{ (\Xi,\Sigma,\nu), T, (\|\cdot\|_{Z}, H)\}$
  on
  $Z$ using the $H$ and $\|\cdot\|_{Z}$ of (\ref{hierp1}, \ref{hierp2}).
\end{cor}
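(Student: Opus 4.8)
The plan is to deduce this directly from Proposition \ref{finsler-hierarchy}, reading it as the degenerate case $K_1 = 0$. First I would observe that when $f$ is independent of $y$ the partial derivative $\tfrac{\partial f}{\partial y}$ vanishes identically, so the integrand appearing in hypothesis (i) of Proposition \ref{finsler-hierarchy} is zero for every $z$ and every $\xi$. Consequently the supremum there equals $0$, and I may take the constant $K_1 = 0$. The quantity $K$ in the present statement is exactly the bound defining $K_2$ in hypothesis (ii) of the proposition: display (\ref{bounded-interconnection-derivative}) is that very condition on $\tfrac{\partial g}{\partial x}$. I therefore set $K_2 = K$.

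Next I would verify that the remaining hypotheses of Proposition \ref{finsler-hierarchy} hold with these choices. Assumption \ref{asu:two-contractions}, which the proposition requires, is assumed here as well. Condition (iii) asks that $K_1 K_2 < (1-\alpha_1)(1-\alpha_2)$; with $K_1 = 0$ the left-hand side is $0$, while the right-hand side is strictly positive because $\alpha_1,\alpha_2 \in [0,1)$ forces both factors to be positive. Hence (iii) holds automatically, and the coupling condition that ties the two subsystems together imposes no real restriction in the hierarchical case.

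Finally I would check the admissibility of the weights $\eta_1,\eta_2$. Proposition \ref{finsler-hierarchy} requires $\eta_1,\eta_2$ (both positive, so that (\ref{hierp2}) defines a genuine norm) satisfying $\eta_2 K_2 < \eta_1(1-\alpha_1)$ and $\eta_1 K_1 < \eta_2(1-\alpha_2)$. The first is precisely the hypothesis $\eta_2 K < \eta_1(1-\alpha_1)$ supplied in the corollary. The second reads $0 < \eta_2(1-\alpha_2)$ after substituting $K_1 = 0$, which holds for any positive $\eta_2$ since $\alpha_2 < 1$. Thus every hypothesis of Proposition \ref{finsler-hierarchy} is met, and its conclusion — the pointwise $p$-contraction inequality for $\{(\Xi,\Sigma,\nu), T, (\|\cdot\|_{Z}, H)\}$ with the $H$ and $\|\cdot\|_{Z}$ of (\ref{hierp1}, \ref{hierp2}) — is exactly what the corollary asserts. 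I do not expect any genuine obstacle here: the only content is recognizing that the triangular (hierarchical) structure forces $K_1 = 0$, after which both condition (iii) and the second weight inequality become vacuous, leaving the single stated inequality $\eta_2 K < \eta_1(1-\alpha_1)$ as the sole requirement.
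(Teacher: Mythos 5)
Your proposal is correct and matches the paper's intent exactly: the paper offers no separate proof of Corollary \ref{finsler-hierarchy-interconn}, presenting it as an immediate specialization of Proposition \ref{finsler-hierarchy}, which is precisely your argument with $K_1 = 0$, $K_2 = K$, condition (iii) holding trivially, and the second weight inequality becoming vacuous.
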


The condition (\ref{bounded-interconnection-derivative}) in Corollary \ref{finsler-hierarchy-interconn} can be relaxed using a kind of Lyapunov function for the interconnection of the two systems, while requiring a stronger form of contraction on the input system.
\begin{prop}\label{hierarchy-lyapunov}
  Let Assumption \ref{asu:two-contractions} hold, 
  with $p \geq 2q$ for some $q\geq 1$.
  Let $K$
  and the continuous function 
  $h:Z\to \mathbb{R}_{\geq 0}$ 
  be such that, for all $z \in Z$,
\begin{enumerate}
    \renewcommand{\theenumi}{\roman{enumi}}
    \item $
      \sup\limits_{\|u_x\|_{X}=1}\displaystyle\left(\int_{\Xi}
    \|
    G(T(z,\xi))\tfrac{\partial g}{\partial x}(z,\xi)F^{-1}(z)u_x
    \|_{X}^{q}
    \,\mathrm{d}\nu(\xi)\right)^{1/q} \leq h(z)
  $, \label{lyap-interconn-parti}
\item  $
  \displaystyle\left(
    \int_{\Xi}h(T(z,\xi))^{p}\,\mathrm{d}\nu(\xi)
  \right)^{1/p}  
  \leq h(z) + 
  K
  $. \label{lyap-interconn-partii}
\end{enumerate}
  Then there are some $\eta_1,\eta_2$ so that a pointwise $q$-contraction inequality holds for the system
  $\{(\Xi,\Sigma,\nu), T, (\|\cdot\|_{Z}, H)\}$ on $Z$
  where
\begin{subequations}
\begin{flalign*}
&  H(z)(u_x,u_y) = ((1+\eta_1 h(z))F(z)u_x,\, G(z)u_y)) \\
&  \|(u_x,u_y)\|_{Z} = \|u_x\|_{X} + \eta_2\|u_y\|_{Y}
\end{flalign*}
\end{subequations}
\end{prop}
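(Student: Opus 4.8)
The plan is to verify the pointwise $q$-contraction inequality of Proposition \ref{contraction-ptwise} for the system $\{(\Xi,\Sigma,\nu), T, (\|\cdot\|_Z, H)\}$; that is, I will produce $\eta_1,\eta_2 > 0$ and an $\alpha < 1$ with
$$\sup_{z\in Z}\sup_{\|u\|_Z = 1}\left(\int_{\Xi}\|H(T(z,\xi))\tfrac{\partial T}{\partial z}(z,\xi)H(z)^{-1}u\|_Z^{q}\,\mathrm{d}\nu(\xi)\right)^{1/q}\leq\alpha.$$
The map $H$ is continuous, invertible-valued, and has $\sup_z\|H(z)^{-1}\| < \infty$ (since $1+\eta_1 h \geq 1$ and using Assumption \ref{asu:two-contractions}), so it is an admissible Finsler structure. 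As in Corollary \ref{finsler-hierarchy-interconn}, this is the hierarchical case $\frac{\partial f}{\partial y} = 0$, so $\frac{\partial T}{\partial z}$ is block lower triangular. Writing $z' = T(z,\xi)$, $u = (u_x,u_y)$, and abbreviating $a = F(z')\tfrac{\partial f}{\partial x}(z,\xi)F(z)^{-1}$, $b = G(z')\tfrac{\partial g}{\partial x}(z,\xi)F(z)^{-1}$, $c = G(z')\tfrac{\partial g}{\partial y}(z,\xi)G(z)^{-1}$, a direct computation shows the $x$-component of $H(z')\tfrac{\partial T}{\partial z}H(z)^{-1}u$ is $\tfrac{1+\eta_1 h(z')}{1+\eta_1 h(z)}\,a\,u_x$ and its $y$-component is $\tfrac{1}{1+\eta_1 h(z)}\,b\,u_x + c\,u_y$. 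The operators $a$ and $c$ are controlled in $L^p$ by the $\alpha_1$- and $\alpha_2$-contraction bounds of Assumption \ref{asu:two-contractions}, while $b$ is controlled in $L^q$ by hypothesis (\ref{lyap-interconn-parti}).

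Next I would expand $\|\cdot\|_Z = \|\cdot\|_X + \eta_2\|\cdot\|_Y$ and apply Minkowski's inequality in $L^q(\nu)$ twice, splitting the left-hand side into an $a$-term, a $b$-term, and a $c$-term. The $c$-term is at most $\eta_2\alpha_2\|u_y\|_Y$: the $\alpha_2$-bound is stated in $L^p$, and since $q \leq p$ (as $p \geq 2q$) and $\nu$ is a probability measure, Jensen's inequality lowers it to the needed $L^q$ bound. The $b$-term is at most $\tfrac{\eta_2}{1+\eta_1 h(z)}h(z)\|u_x\|_X \leq \tfrac{\eta_2}{\eta_1}\|u_x\|_X$, using hypothesis (\ref{lyap-interconn-parti}) together with the elementary bound $\tfrac{h(z)}{1+\eta_1 h(z)}\leq\tfrac{1}{\eta_1}$.

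The main obstacle is the $a$-term, $\big(\int_{\Xi}(\tfrac{1+\eta_1 h(z')}{1+\eta_1 h(z)})^{q}\|a\,u_x\|_X^{q}\,\mathrm{d}\nu\big)^{1/q}$, where the Lyapunov factor $1+\eta_1 h(z')$ and the contracting factor $\|a\,u_x\|_X$ both depend on $\xi$ and must be decoupled. I would apply Hölder's inequality with conjugate exponents $r' = p/q$ and $r = p/(p-q)$. The choice $r' = p/q$ turns the contraction factor into $\big(\int\|a\,u_x\|_X^{p}\,\mathrm{d}\nu\big)^{q/p}\leq(\alpha_1\|u_x\|_X)^{q}$, while the other factor is $\big(\int(1+\eta_1 h(z'))^{qr}\,\mathrm{d}\nu\big)^{1/r}$. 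Here the hypothesis $p \geq 2q$ is used in an essential way: it guarantees $qr = \tfrac{pq}{p-q}\leq p$, so that by Jensen's inequality (as $qr \leq p$) and then Minkowski applied to $1+\eta_1 h(z')$, together with the Lyapunov inequality (\ref{lyap-interconn-partii}), this factor is at most $(1+\eta_1(h(z)+K))^{q}$. Dividing by $(1+\eta_1 h(z))^{q}$ and using $\tfrac{1+\eta_1(h(z)+K)}{1+\eta_1 h(z)}\leq 1+\eta_1 K$, the $a$-term is bounded by $(1+\eta_1 K)\alpha_1\|u_x\|_X$.

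Collecting the three estimates, the left-hand side is at most $[(1+\eta_1 K)\alpha_1 + \tfrac{\eta_2}{\eta_1}]\|u_x\|_X + \alpha_2(\eta_2\|u_y\|_Y)$, so it suffices to choose $\eta_1,\eta_2$ making $\alpha = \max\{(1+\eta_1 K)\alpha_1 + \tfrac{\eta_2}{\eta_1},\,\alpha_2\} < 1$. Since $\alpha_1 < 1$, I would first fix $\eta_1 > 0$ small enough that $(1+\eta_1 K)\alpha_1 < 1$, and then take $\eta_2 > 0$ small enough that $\tfrac{\eta_2}{\eta_1} < 1 - (1+\eta_1 K)\alpha_1$; as $\alpha_2 < 1$ is given, this yields $\alpha < 1$ and finishes the verification. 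The delicate point is entirely in the $a$-term, where the exponent bookkeeping forced by $p \geq 2q$ is what lets the Lyapunov growth of $h$ be absorbed without destroying the contraction.
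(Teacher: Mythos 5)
Your proof is correct, and in essence it is the paper's argument: both hinge on the weighted structure $(1+\eta h(z))F(z)$ on the $x$-factor, a H\"older decoupling of the Lyapunov ratio $\tfrac{1+\eta h(T(z,\xi))}{1+\eta h(z)}$ from the contraction factor, and the observation that the prefactor $\tfrac{1}{1+\eta h(z)}$ turns the unbounded cross-derivative bound $h(z)$ into the uniform bound $1/\eta$. The differences are in execution. The paper is modular: it verifies that the rescaled pair $(\|\cdot\|_X, F_1)$ with $F_1=(1+\eta_3 h)F$ keeps $f$ a $q$-contraction (with coefficient $\alpha_1(1+\eta_3 K)$; the printed $\alpha_1(1+\eta_3)K$ is a typo) and that the cross bound (\ref{bounded-interconnection-derivative}) holds with constant $1/\eta_3$, then cites Corollary \ref{finsler-hierarchy-interconn}; you instead unfold that corollary, computing the conjugated Jacobian $H(T(z,\xi))\tfrac{\partial T}{\partial z}(z,\xi)H(z)^{-1}$ blockwise and estimating your $a$-, $b$-, $c$-terms directly, which amounts to the same decomposition. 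The one substantive divergence is the H\"older split in the $a$-term: the paper uses Cauchy--Schwarz, placing both the contraction factor and the Lyapunov factor in $L^{2q}$ and invoking $p\ge 2q$ twice via Jensen, whereas you use the conjugate pair $(p/q,\,p/(p-q))$, spending the contraction hypothesis exactly at $L^p$ and concentrating the constraint in the single requirement $qp/(p-q)\le p \iff p\ge 2q$. Both choices need the same hypothesis and give the same bound $\alpha_1(1+\eta_1 K)$, so nothing changes in generality, but your bookkeeping makes the origin of the condition $p\ge 2q$ more transparent; your inlined version also makes explicit the Jensen reduction from the $L^p$ bound on $\partial g/\partial y$ to the $L^q$ bound actually used, a step the paper leaves implicit when it applies Corollary \ref{finsler-hierarchy-interconn} (stated for a single exponent) with mixed-exponent data. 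What the paper's route buys is brevity and reuse of the interconnection corollary.
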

\begin{proof}
  Let 
  $\alpha_1,\alpha_2$ be contraction coefficients for $f,g$ respectively.  
  Let
  $F_1(z) = [1 + \eta_3h(z)]F(z)$,
  using an $\eta_3 \geq 0$ such that 
  $\alpha_1(1 + \eta_3 K) < 1$.
  We aim to apply
  Corollary \ref{finsler-hierarchy-interconn}
  to the pair of systems $f$ and $g$, using a metric defined by the pairs
  $(\|\cdot\|_{X}, F_1)$
  and $(\|\cdot\|_Y,G)$, in order to find $q$-contraction of the joint system.
  Letting $\|u_x\|_{X}=1$, then,
  \begin{align*}
    &
    \left(\int_{\Xi}
    \|
    F_1(T(z,\xi))
    \tfrac{\partial f}
          {\partial x}(x,\xi)
    F_1(z)^{-1}u_x
    \|_{X}^{q}
    \, \mathrm{d}\nu(\xi)\right)^{1/q} \\
    &\quad\quad=
    \left(\int_{\Xi}
    \left\|
      \frac{1+\eta_3h(T(z,\xi))}
           {1 + \eta_3 h(z)}
           F(T(z,\xi))
           \tfrac{\partial f}
                 {\partial x}(x,\xi)
           F(z)^{-1}u_x
    \right\|_{X}^{q}
    \, \mathrm{d}\nu(\xi)\right)^{1/q}
\end{align*}
Applying H\"olders inequality and the assumption on 
$\tfrac{\partial f}{\partial x}$ yields
\begin{align*}
  &\quad\quad\leq
    \frac{
    1
    }{
      1+\eta_3 h(z)
    }
    \left(1 + \eta_3\left(\int_{\Xi}h(T(z,\xi))^{2q}\, \mathrm{d}\nu(\xi)\right)^{1/(2q)}\right)
    \alpha_1 \\
    &\quad\quad\leq 
    \frac{
      1+\eta_3(h(z) + K)
    }{
      1 +\eta_3 h(z)
    }\alpha_1
        \leq
    \alpha_1(1+\eta_3 )K
\end{align*}
It remains to show that inequality (\ref{bounded-interconnection-derivative}) holds. 
  Let $\|u_x\|_{X}=1$. Then
  \begin{align*}
    &
    \left(
    \int_{\Xi}
    \|G(T(z,\xi))\tfrac{\partial g}{\partial x}(z,\xi)F_1(z)^{-1}u_x\|_{Y}^{q}
    \, \mathrm{d}\nu(\xi)\right)^{1/q}  \\
    &\quad=\frac{1}{1+\eta_3h(z)}
    \left(
      \int_{\Xi}
    \|G(T(z,\xi))\tfrac{\partial g}{\partial x}(z,\xi_2)F(x)^{-1}u_x\|_{Y}^{q}
    \, \mathrm{d}\nu(\xi)\right)^{1/q} 
\leq 
    \frac{h(z)}{1+\eta_3 h(z)} 
    \leq 
    \frac{1}{\eta_3}
    \end{align*}

Let $\eta_1,\eta_2$ be chosen so that
    $
    \eta_2 \frac{1}{\eta_3}
    <
    \eta_1(1-\alpha_1(1 + \eta_3)K)
    $.
    Then by Corollary \ref{finsler-hierarchy-interconn}  the tuple
    $\{ (\Xi,\Sigma,\nu),T, (\|\cdot\|_{Z},H)\}$ 
    determines a $q$-contracting system, where
    $\|(u,v)\|_{Z} = \eta_1\|u\| + \eta_2\|v\|$
    and 
    $H(z)(u_x,u_y) =
    ((1+\eta_3h(z))F(z)u_x , G(z)u_y)$.
    One can take $\eta_1=1$ in these requirements,
    by choosing $\eta_2$ small enough  that
    $
    \eta_2 \frac{1}{\eta_3}
    <
    (1-\alpha_1(1 + \eta_3)K)
    $.
\end{proof}
\section{Stationary differentiability}
\label{sect:stationary-differentiability}
Differentiability of stationary costs is established using properties of the Markov kernel 
$P$. In the next section, the Assumptions are verified based on properties of the derivatives of the system.

Formally differentiating the equation 
$\pi_{\theta} =\pi_{\theta} P_{\theta}$ in $\theta$
suggests the stationary derivative $\pi'$ solves the equation
$l = l P_{\theta} + \pi_{\theta} P_{\theta}'$
in the variable $l$.
By defining $P'$ properly, 
as the linear map 
$e \mapsto \frac{\partial}{\partial \theta}P_{\theta}e$ 
on the space of cost functions, and considering this equation as being between functionals defined on the cost functions, one can show that it has a unique solution $
l^*$, 
which is such that
$l^{*}(e)
=
\frac{\partial}{\partial \theta}\int_{X}e(x)\, \mathrm{d}\pi_{\theta}(x)$. The line of argument used in this section is a variant of Theorem 2 in \cite{heidergott2003taylor}, adapted to the specific ergodicity and state space conditions that we work with. In that work, a class of functions with a norm $\|e\| = \sup_{x}\frac{|e(x)|}{V(x)}$ is considered, while the norm we will use also involves the derivatives of $e$. In the work of \cite{heidergott2003taylor}, an important role is played by the \textit{deviation operator} $D_{\theta}$ (see Section 3 of that work) and that in their setting $D_{\theta}$ maps $\mathcal{E}^2$ back into itself. Dealing directly with the deviation operator in our case requires care since the space of functions will have more subtle topological properties due to the terms involving derivatives. We leave a possible unification of these two approaches to future work.

We introduce the assumptions on $P$ and the cost functions $\mathcal{E}$:
\begin{asu}\label{state-space}
   $X$ is a Polish space,
   $\mathcal{E}$ a vector space of real-valued functions on $X$ with norm 
   $\|\cdot\|_{\mathcal{E}}$,
   and $\mathcal{P}$ a space of probability measures on $X$.
   For any $\mu \in \mathcal{P}$,  it is required that 
   $\sup_{\|e\|_{\mathcal{E}} \leq 1}|\mu(e)| < \infty$.
\end{asu} 
Denote by 
$\Pi_{\theta}$ 
the Markov kernel 
$\Pi_{\theta}(x,A) = \pi_{\theta}(A)$. The parameter space is an open set
$\Theta \subseteq \mathbb{R}^{n_{\Theta}}$ 
and we fix a 
$\theta_0 \in \Theta$.
The space
$\mathbb{R}^{n_{\Theta}}$
has a norm $\|\cdot\|_{\Theta}$.
We show that the map sending a cost function
$e$
to its stationary derivative at the fixed parameter
$\theta_0$
is an element of 
the set $\mathcal{L}$
 of linear maps
from $\mathcal{E}$ to
$L(\mathbb{R}^{n_{\Theta}},\mathbb{R})$ that vanish on the constant functions and are bounded with respect to 
the norm
$\|l\|_{\mathcal{L}} = \sup_{\|e\|_{\mathcal{E}}\leq 1}\|l(e)\|_{\Theta}$:
\begin{equation*}
\mathcal{L} = 
  \{ 
     l \in L(\mathcal{E}, L(\mathbb{R}^{n_{\Theta}},\mathbb{R})) \mid 
     \|l\|_{\mathcal{L}} < \infty,\, l(\mathbf{1}) = 0 
  \}
\end{equation*}
where $\mathbf{1}$ refers to the constant function $x\mapsto 1$. 
Note that $\mathcal{L}$ is a complete space.

To discuss stationary differentiability we introduce the operator 
$\frac{\partial }{\partial \theta}P_{\theta_0}$.
If 
$e \in \mathcal{E}$ 
then
$\frac{\partial}{\partial \theta}P_{\theta_0}e$
is the function from $X$ into 
$L(\mathbb{R}^{n_{\theta}},\mathbb{R})$ 
defined by
$(\frac{\partial}{\partial \theta}P_{\theta_0}e)(x) = \frac{\partial}{\partial\theta}(P_{\theta_0}e(x))$. 
\iffalse We similarly define for each $\Delta \theta \in \mathbb{R}^{n_{\Theta}}$ the operator $\frac{\partial}{\partial \theta} P_{\theta_0}(\Delta \theta)$. If $e \in \mathcal{E}$ then $\frac{\partial}{\partial \theta}P_{\theta_0}(\Delta \theta)e$ is the function from $X$ to $\mathbb{R}$ defined by $(\frac{\partial}{\partial \theta}P_{\theta_0}(\Delta \theta))e(x)  = \frac{\partial}{\partial \theta}(P_{\theta_0}e(x))(\Delta \theta)$.\fi

\begin{asu}\label{asu:firstorder}\label{asu:p}
For any  $\theta \in \Theta$ the following hold.
\begin{enumerate}
    \renewcommand{\theenumi}{\roman{enumi}}
\item 
  If $\mu \in \mathcal{P}$ then $\mu P_{\theta} \in \mathcal{P}$ and $P_{\theta}$ has a stationary measure $\pi_{\theta}$ in $\mathcal{P}$, \label{p-is-inv}
\item
  If $e \in \mathcal{E}$ then $P_{\theta}e \in \mathcal{E}$, $\|P_{\theta}\|_{\mathcal{E}} < \infty$,  and 
  $\sum\limits_{i=0}^{\infty}
  \|P_{\theta_0}^{i} - \Pi_{\theta_0}\|_{\mathcal{E}} \leq K_{\theta_0}$ for some $K_{\theta_0} \geq 0$,\label{deviation-bound}
\item 
  For $e\in\mathcal{E}$ and $x\in X$
  the function
  $\theta \mapsto P_{\theta}e(x)$
  is differentiable at $\theta_0$ and
  \(
    \|
    \pi_{\theta_0}\tfrac{\partial}{\partial \theta}P_{\theta_0}
    \|_{\mathcal{L}}
    <
    \infty
  \),
  \label{pi-d-theta-is-l-bounded-operator}
\item 
  \(
  \tfrac{1}{\|\Delta\theta\|_{\Theta}}
  \|
  \pi_{\theta_0}[P_{\theta_0+\Delta\theta} 
      - P_{\theta_0}
      - \tfrac{\partial}{\partial \theta}P_{\theta_0}(\Delta\theta)]
  \|_{\mathcal{E}} 
  \rightarrow 0
  \) 
  as 
  $\|\Delta\theta\|_{\Theta} \rightarrow 0$,
  \label{p-theta-differentiable-at-pi}
\item 
  $\frac{1}{\|\Delta\theta\|_{\Theta}}\|(\pi_{\theta_0+\Delta\theta} - \pi_{\theta_0})[P_{\theta_0+\Delta\Theta} - P_{\theta_0}]\|_{\mathcal{E}} 
  \rightarrow 
  0$ 
  as 
  $\|\Delta\theta\|_{\Theta} \rightarrow 0$.
  \label{dp-theta-continuous-at-pi}
\end{enumerate} \end{asu}

In part \ref{p-theta-differentiable-at-pi}, the functional $\pi_{\theta_0}[ P_{\theta_0 + \Delta\theta} - P_{\theta_0} -\frac{\partial}{\partial \theta}P_{\theta_0}(\Delta \theta)]$ maps a function $e \in \mathcal{E}$ to the number
$
\pi_{\theta_0} P_{\theta + \Delta \theta}(e) -
\pi_{\theta_0} P_{\theta_0}(e) - 
\pi_{\theta_0}(\frac{\partial}{\partial \theta}P_{\theta_0}e(\Delta \theta))$.

The main theorem on stationary differentiability is as follows:
\begin{thm}\label{diff-thm}
  Under Assumptions \ref{state-space} and \ref{asu:p}
  if $e \in \mathcal{E}^{2}$ then 
  $\pi_{\theta}(e)$ is differentiable at $\theta_0$
  and 
  $\tfrac{\partial}{\partial \theta}
   \int_{X}e(x)\,\, \mathrm{d}\pi_{\theta_0}(x)
   = 
   l^{*}(e)$
   where  
  $l^* \in \mathcal{L}$ 
  satisfies 
  $l^* = l^*P_{\theta_0} + \pi_{\theta_0}\frac{\partial}{\partial \theta}P_{\theta_0}$.
\end{thm}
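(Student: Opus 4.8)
The plan is to follow the two–stage strategy that the statement itself suggests: first solve the fixed–point equation $l = lP_{\theta_0} + \pi_{\theta_0}\frac{\partial}{\partial\theta}P_{\theta_0}$ for a unique $l^{*}\in\mathcal{L}$, and then verify that this $l^{*}$ computes the stationary derivative. Throughout I abbreviate $P = P_{\theta_0}$, $\Pi = \Pi_{\theta_0}$, $\pi = \pi_{\theta_0}$, and I set $g = \pi\frac{\partial}{\partial\theta}P$, which lies in $\mathcal{L}$ by Assumption \ref{asu:p}(\ref{pi-d-theta-is-l-bounded-operator}) together with the observation that $g(\mathbf{1}) = \pi\frac{\partial}{\partial\theta}(P\mathbf{1}) = 0$, since $P\mathbf{1} = \mathbf{1}$. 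The structural fact that makes the Neumann approach work is that every $l\in\mathcal{L}$ annihilates constants, hence annihilates $\Pi$: because $\Pi e = \pi(e)\mathbf{1}$, we get $l(\Pi e) = \pi(e)l(\mathbf{1}) = 0$, so $lP^{i} = l(P^{i}-\Pi)$ for every $i$. This converts the formally divergent series $\sum_i gP^i$ into a convergent one.

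Concretely, I would define $l^{*} = \sum_{i=0}^{\infty} g(P^{i}-\Pi)$. By Assumption \ref{asu:p}(\ref{deviation-bound}),
$$
\|l^{*}\|_{\mathcal{L}} \leq \|g\|_{\mathcal{L}}\sum_{i=0}^{\infty}\|P^{i}-\Pi\|_{\mathcal{E}} \leq \|g\|_{\mathcal{L}}K_{\theta_0} < \infty,
$$
so the series converges in the complete space $\mathcal{L}$, and an index shift gives $l^{*}P + g = \sum_{i\geq 1}gP^{i} + g = l^{*}$, confirming $l^{*}$ solves the equation. Uniqueness is immediate: a difference $l$ of two solutions satisfies $l = lP$, hence $l = lP^{i} = l(P^{i}-\Pi)\to 0$ as $i\to\infty$.

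The second and harder step is to show $\frac{\partial}{\partial\theta}\pi_\theta(e)\big|_{\theta_0} = l^{*}(e)$. Writing $\pi_\Delta = \pi_{\theta_0+\Delta\theta}$ and $P_\Delta = P_{\theta_0+\Delta\theta}$, the starting point is the exact identity obtained from the stationarity relations $\pi_\Delta = \pi_\Delta P_\Delta$ and $\pi = \pi P$, namely $\pi_\Delta - \pi = (\pi_\Delta - \pi)P + \pi_\Delta(P_\Delta - P)$. Since both $\pi_\Delta - \pi$ and $\pi_\Delta(P_\Delta - P)$ vanish on constants, iterating this identity exactly as above — using Assumption \ref{asu:p}(\ref{deviation-bound}) to justify convergence and the vanishing of the remainder $(\pi_\Delta - \pi)P^{n}$ — yields $(\pi_\Delta - \pi)(e) = \sum_{i=0}^{\infty}\pi_\Delta(P_\Delta - P)\big((P^{i}-\Pi)e\big)$. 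Setting $e_i = (P^{i}-\Pi)e$ and subtracting the companion series $l^{*}(e)(\Delta\theta) = \sum_i \pi\frac{\partial}{\partial\theta}P(\Delta\theta)(e_i)$, I would rewrite each summand as
$$
\pi_\Delta(P_\Delta - P)(e_i) - \pi\tfrac{\partial}{\partial\theta}P(\Delta\theta)(e_i) = (\pi_\Delta - \pi)(P_\Delta - P)(e_i) + \pi\big[(P_\Delta - P) - \tfrac{\partial}{\partial\theta}P(\Delta\theta)\big](e_i).
$$
The first piece is governed by Assumption \ref{asu:p}(\ref{dp-theta-continuous-at-pi}) and the second by Assumption \ref{asu:p}(\ref{p-theta-differentiable-at-pi}). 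Bounding each by the corresponding functional norm times $\|e_i\|_{\mathcal{E}}$ and summing with $\sum_i\|e_i\|_{\mathcal{E}}\leq K_{\theta_0}\|e\|_{\mathcal{E}}$ gives
\begin{align*}
\frac{|(\pi_\Delta - \pi)(e) - l^{*}(e)(\Delta\theta)|}{\|\Delta\theta\|_\Theta}
&\leq \frac{K_{\theta_0}\|e\|_{\mathcal{E}}}{\|\Delta\theta\|_\Theta}\Big(\|(\pi_\Delta - \pi)(P_\Delta - P)\|_{\mathcal{E}} \\
&\qquad + \big\|\pi[(P_\Delta - P) - \tfrac{\partial}{\partial\theta}P(\Delta\theta)]\big\|_{\mathcal{E}}\Big),
\end{align*}
which tends to $0$ as $\|\Delta\theta\|_\Theta\to 0$, establishing differentiability with derivative $l^{*}(e)$.

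The main obstacle is the uniform control of the infinite sum across the limit in $\Delta\theta$: each hypothesis only delivers an $o(\|\Delta\theta\|_\Theta)$ estimate for a single functional, whereas the perturbation expansion is an infinite series whose terms must be dominated uniformly in $\Delta\theta$. This is precisely where the deviation bound $\sum_i\|P^{i}-\Pi\|_{\mathcal{E}}\leq K_{\theta_0}$ is indispensable: it decouples the $\Delta\theta$–dependence, carried entirely by the two functional norms (each $o(\|\Delta\theta\|_\Theta)$), from the summation over $i$, carried by the summable weights $\|e_i\|_{\mathcal{E}}$, so that the product survives division by $\|\Delta\theta\|_\Theta$. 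A secondary point needing care is that $\pi_\Delta - \pi$ and $\pi_\Delta(P_\Delta - P)$ are genuinely bounded functionals on $\mathcal{E}$ — which follows from Assumption \ref{state-space} and the bound $\|P_\theta\|_{\mathcal{E}}<\infty$ of Assumption \ref{asu:p}(\ref{deviation-bound}), together with the fact that $\Pi$ maps $\mathcal{E}$ into itself — so that the manipulations producing the perturbation series are legitimate.
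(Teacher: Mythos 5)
Your proposal is correct and takes essentially the same route as the paper's own proof: the identical Neumann-series construction of $l^{*}$ (using the same vanishing-on-constants trick $l\Pi_{\theta_0}=0$ for both convergence and uniqueness), and the identical reduction of differentiability to the two error functionals of Assumption \ref{asu:p}, parts \ref{p-theta-differentiable-at-pi} and \ref{dp-theta-continuous-at-pi}, each weighted by the deviation bound $K_{\theta_0}$. The only difference is organizational rather than mathematical: the paper derives a self-referential identity for $c(\Delta\theta)=\pi_{\theta_0+\Delta\theta}-\pi_{\theta_0}-l^{*}(\cdot)(\Delta\theta)$ and iterates it, whereas you expand $\pi_{\theta_0+\Delta\theta}-\pi_{\theta_0}$ and $l^{*}(\cdot)(\Delta\theta)$ as series and subtract term by term, arriving at the same final bound.
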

\begin{proof}[Proof of Theorem \ref{diff-thm}]
  First, define 
  $T:\mathcal{L} \to \mathcal{L}$ as
  $
    T(l) := 
    lP_{\theta_0} + 
    \pi_{\theta_0}
      \tfrac{\partial}{\partial \theta}P_{\theta_0}.
$
  That
  $\pi_{\theta_0}
  \frac{\partial}{\partial \theta}
  P_{\theta_0}$
  is in
  $\mathcal{L}$ 
  was one of our assumptions along with $\|P_{\theta}\|_{\mathcal{E}} < \infty$, which implies $T$ is well-defined.
  Let $l^{*}$ be the functional
  $l^{*} 
  =
  \sum\limits_{i=0}^{\infty}(\pi_{\theta_0}\tfrac{\partial}{\partial \theta}P_{\theta_0})P_{\theta_0}^{i}$.
  This is in $\mathcal{L}$ since that space is Banach and  by Part \ref{deviation-bound} of Assumption \ref{asu:firstorder},
  \begin{align*}
    \sum\limits_{i=0}^{\infty}
    \|(\pi_{\theta_0}\tfrac{\partial}{\partial \theta}P_{\theta_0})P_{\theta_0}^{i}\|_{\mathcal{L}}
    &=
      \sum\limits_{i=0}^{\infty}
      \|
      (\pi_{\theta_0}\tfrac{\partial}{\partial \theta}P_{\theta_0})(P_{\theta_0}^{i}- \Pi_{\theta_0})
      \|_{\mathcal{L}} 
      \leq
      \|\pi_{\theta_0}\tfrac{\partial}{\partial \theta}P_{\theta_0}\|_{\mathcal{L}}K.
\end{align*}
To see that $l^{*}$ is  a fixed-point of $T$, note that
$
T(l^{*}) = 
  \sum\limits_{i=1}^{\infty}(\pi_{\theta_0}\tfrac{\partial}{\partial \theta}P_{\theta_0})P_{\theta_0}^{i} + 
  \pi_{\theta_0}\tfrac{\partial}{\partial \theta}P_{\theta_0} = l^{*}.$

To show $l^{*}$ is the unique fixed-point,
let $l$ be any other fixed-point of $T$. Then
$$\|l - l^{*}\|_{\mathcal{L}} = \|T^{n}(l) - T^{n}(l^{*})\|_{\mathcal{L}} 
=
\|(l - l^{*})(P_{\theta_0}^{n}-\Pi_{\theta_0})\|_{\mathcal{L}} \leq 
\|l - l^{*}\|_{\mathcal{L}}\|P_{\theta_0}^{n}-\Pi_{\theta_0}\|_{\mathcal{E}}.$$
Using Part \ref{deviation-bound} of Assumption \ref{asu:firstorder} again, the right hand side of this inequality goes to zero as 
$n\rightarrow \infty$, 
hence $T$ possesses a unique fixed-point 
$l^{*}$ in $\mathcal{L}$.

Define $c(\Delta\theta)$ as the functional
$c(\Delta\theta)(e) = 
\pi_{\theta_0 + \Delta \theta}(e)
- 
\pi_{\theta_0}(e)
-
 l^{*}(e)(\Delta \theta).$
Assumption \ref{state-space} and the definition of $\mathcal{L}$ guarantees $c(\Delta\theta) \in L(\mathcal{E},\mathbb{R})$.
It suffices that 
$
\tfrac{1}{\|\Delta\theta\|_{\Theta}}\|c(\Delta\theta)\|_{\mathcal{E}}
\rightarrow 0
$ 
as
$\Delta\theta \rightarrow 0$.
Using the fact that $T(l^{*}) = l^{*}$, we have
\begin{align*}
c(\Delta\theta)
 &=
   \pi_{\theta_0}
   [P_{\theta_0+\Delta\theta} - P_{\theta_0} - \tfrac{\partial}{\partial\theta}P_{\theta_0}(\Delta \theta)] %\\&
   +
   (\pi_{\theta_0+\Delta\theta} - \pi_{\theta_0})[P_{\theta_0 + \Delta\theta} - P_{\theta_0}]
 %\\&
+c(\Delta\theta)P_{\theta_0}
\end{align*}
Iterating this, and noting that each summand is a functional vanishing on the constant functions,  we obtain that for any $k>0$,
\begin{align*}
  c(\Delta\theta)
 &=
   \pi_{\theta_0}
   (P_{\theta_0+\Delta\theta} - P_{\theta_0} - \tfrac{\partial}{\partial\theta}P_{\theta_0} (\Delta\theta) )\sum\limits_{i=0}^{k-1}(P_{\theta_0}^{i}-\Pi_{\theta_0})\\
 &+
   (\pi_{\theta_0+\Delta\theta} - \pi_{\theta_0})[P_{\theta_0 + \Delta\theta} - P_{\theta_0}]\sum\limits_{i=0}^{k-1}(P_{\theta_0}^{i}-\Pi_{\theta_0}) \\
&+c(\Delta\theta)(P^{k}_{\theta_0}-\Pi_{\theta_0})
\end{align*}
Taking norms and letting $k\rightarrow \infty$, we see that 
\begin{equation*}%\label{c-ineq-two}
\begin{split}
  &\|c(\Delta\theta)\|_{\mathcal{E}} \\
  &\quad\quad\leq
  \|
  \pi_{\theta}
  (P_{\theta_0+\Delta\theta} 
  - 
  P_{\theta_0} 
  -
  \tfrac{\partial}{\partial \theta}P_{\theta_0}(\Delta\theta))
  \|_{\mathcal{E}}K_{\theta_0}
  +
   \|(\pi_{\theta_0+\Delta\theta} - \pi_{\theta_0})[P_{\theta_0 + \Delta\theta} - P_{\theta_0}]\|_{\mathcal{E}}K_{\theta_0}.
\end{split}
\end{equation*}
Finally, use Parts \ref{p-theta-differentiable-at-pi} and \ref{dp-theta-continuous-at-pi} of Assumption \ref{asu:p} 
\end{proof}
\section{State space conditions}
\label{sect:state-space-conditions}
Let 
$P_{\theta}$
be the transition kernel of the Markov chain
\begin{equation}\label{parameterized-state-rep}
x_{n+1} = f(x_{n},\xi_{n+1},\theta)
\end{equation}
with $\nu$-distributed random input $\xi_n$.
In this section we show how Assumptions \ref{asu:lipVA}, \ref{asu:differentiable},  \ref{asu:lipVB},
  and \ref{asu:f} imply
Assumptions \ref{state-space} and \ref{asu:p},
thereby establishing differentiability of the stationary costs for those cost functions $e\in\mathcal{E}^{2}$.

\iffalse 
Based on the formulation of the forward sensitivity procedure and on the nature of the Wasserstein metric, a first choice for the class of cost functions would be the differentiable functions from $X$ to $\mathbb{R}$ that are also Lipschitz for the metric $d_A$. 
It is also clear that some first order conditions on $f$ are necessary, such as the contraction condition of Proposition \ref{contraction-ptwise}. However, first order conditions alone seem insufficient to obtain the uniformity needed in our assumptions, particularly Parts \ref{p-theta-differentiable-at-pi} and \ref{dp-theta-continuous-at-pi} of Assumption \ref{asu:p}.
Also, the second order properties of the underlying system $f$ are implicated when analyzing contraction of the joint system. 
\fi
\begin{thm}\label{p-bds}
  Let Assumptions \ref{asu:lipVA} -  
   \ref{asu:f} be satisfied.
  Then Assumptions  \ref{state-space} and
  \ref{asu:p} are verified
  for the space $\mathcal{P}_{2,A}(X)$ of probability measures and the space of cost functions $\mathcal{E}^{2}$, at any $\theta_0 \in \Theta$.
  Hence $\pi_{\theta_0}(e)$ is differentiable
  for any $\theta_0\in \Theta$ and $e \in \mathcal{E}^{2}$.
\end{thm}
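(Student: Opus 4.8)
The plan is to verify Assumptions \ref{state-space} and \ref{asu:p} with $\mathcal{P} = \mathcal{P}_{2,A}(X)$ and $\mathcal{E} = \mathcal{E}^{2}$; the differentiability conclusion is then immediate from Theorem \ref{diff-thm}. Assumption \ref{state-space} is quick: $X$ is closed in $\mathbb{R}^{n_X}$, hence Polish, and $\|e\|_{\mathcal{E}^{2}} \leq 1$ forces $|e(x)| \leq 1 + d_A(x,x_0)$, so that $|\mu(e)| \leq 1 + \int_X d_A(x,x_0)\,\mathrm{d}\mu$ is finite for $\mu \in \mathcal{P}_{2,A}$ by Cauchy--Schwarz. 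For part \ref{p-is-inv} of Assumption \ref{asu:p} I would first check that $V(x) = 1 + d_A(x,x_0)$ is a $2$-Lyapunov function for every $P_\theta$: combining Minkowski's inequality with the pointwise contraction of Proposition \ref{contraction-ptwise} (whose hypothesis holds for $p = 2$ and $\alpha = K_X < 1$, because $L_X$ bounds from above the supremum appearing there and $\sup L_X \leq K_X$) gives $(P_\theta V^{2})^{1/2} \leq K_X V + (1 - K_X + C(\theta))$, where $C(\theta) = (\int_\Xi d_A(x_0,f(x_0,\xi,\theta))^{2}\,\mathrm{d}\nu)^{1/2}$ is finite by Assumption \ref{asu:differentiable}; the sublevel sets of $V$ are compact because $\sup_x\|A(x)^{-1}\|_X < \infty$ makes $d_A$ dominate a multiple of the Euclidean metric. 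Proposition \ref{contraction-strong} then furnishes a unique $\pi_\theta \in \mathcal{P}_{2,A}$ and contraction of $P_\theta$ in $d_{2,A}$, and the Lyapunov bound shows $\mu P_\theta \in \mathcal{P}_{2,A}$.

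The technical core is a family of recursive seminorm estimates. Differentiating $P_\theta e(x) = \int_\Xi e(f(x,\xi,\theta))\,\mathrm{d}\nu$ under the integral sign (justified by dominated convergence from the boundedness of the $L_{X^{i},\Theta^{j}}$ in Assumption \ref{asu:f} and the finiteness of the $\mathcal{E}^{2}$-seminorms of $e$), then inserting factors $A(f)^{-1}A(f)$ and applying Cauchy--Schwarz in $\xi$, I expect the bounds $\|\tfrac{\partial}{\partial x}P_\theta e\|_A \leq K_X\|\tfrac{\partial e}{\partial x}\|_A$ and $\|\tfrac{\partial^{2}}{\partial x^{2}}P_\theta e\|_{A,A} \leq K_X^{2}\|\tfrac{\partial^{2} e}{\partial x^{2}}\|_{A,A} + K_{X^{2}}\|\tfrac{\partial e}{\partial x}\|_A$, together with $\|P_\theta e\|_A \leq (1 + C(\theta))\|e\|_A$. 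These give $P_\theta : \mathcal{E}^{2} \to \mathcal{E}^{2}$ bounded. For the deviation bound in part \ref{deviation-bound}, iterating these inequalities makes the two derivative seminorms of $P_{\theta_0}^{i}e$ decay geometrically; for the order-zero part I would use that every $e \in \mathcal{E}^{2}$ is $d_A$-Lipschitz with constant $\|\tfrac{\partial e}{\partial x}\|_A$ and apply the contraction estimate (\ref{eqn-cr}) with $\mu = \delta_x$ together with the duality (\ref{eqn-df}) to get $\|P_{\theta_0}^{i}e - \pi_{\theta_0}(e)\|_A \leq K_X^{i}\max\{1,D\}\|\tfrac{\partial e}{\partial x}\|_A$ with $D = \int_X d_A(x_0,y)\,\mathrm{d}\pi_{\theta_0}(y)$. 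Summing the three geometric series gives $\sum_i\|P_{\theta_0}^{i} - \Pi_{\theta_0}\|_{\mathcal{E}^{2}} < \infty$.

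Parts \ref{pi-d-theta-is-l-bounded-operator} and \ref{p-theta-differentiable-at-pi} are Taylor estimates in $\theta$. Differentiating under the integral gives $\tfrac{\partial}{\partial\theta}P_{\theta_0}e = \int_\Xi \tfrac{\partial e}{\partial x}(f)\tfrac{\partial f}{\partial\theta}\,\mathrm{d}\nu$; inserting $B(x)^{-1}B(x)$ and using $L_\Theta \leq K_\Theta$ bounds its $x$-integrand by $K_\Theta\|\tfrac{\partial e}{\partial x}\|_A\|B(x)\|_\Theta$, and since $\|B\|_\Theta$ is $d_A$-Lipschitz and hence of linear $d_A$-growth it is $\pi_{\theta_0}$-integrable, giving $\|\pi_{\theta_0}\tfrac{\partial}{\partial\theta}P_{\theta_0}\|_{\mathcal{L}} < \infty$; vanishing on constants holds since $\tfrac{\partial}{\partial x}\mathbf{1} = 0$. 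For part \ref{p-theta-differentiable-at-pi}, Taylor's theorem for the $C^{2}$ map $\theta \mapsto e(f(x,\xi,\theta))$ writes the first-order remainder through $\tfrac{\partial^{2}}{\partial\theta^{2}}$, which after the $A(f)^{-1}$ and $B^{-1}$ insertions is dominated by $(K_\Theta^{2}\|\tfrac{\partial^{2} e}{\partial x^{2}}\|_{A,A} + K_{\Theta^{2}}\|\tfrac{\partial e}{\partial x}\|_A)\|B(x)\|_\Theta^{2}\|\Delta\theta\|_\Theta^{2}$; integrating against $\pi_{\theta_0}$ and using $\int_X\|B\|_\Theta^{2}\,\mathrm{d}\pi_{\theta_0} < \infty$ yields a bound of order $\|\Delta\theta\|_\Theta^{2}$, which is the required $o(\|\Delta\theta\|_\Theta)$.

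I expect part \ref{dp-theta-continuous-at-pi} to be the main obstacle, because it couples the variation of the kernel with the variation of the stationary measure, and the natural local Lipschitz constant of $(P_{\theta_0+\Delta\theta} - P_{\theta_0})e$ grows like $\|B\|_\Theta$, which need not be bounded. The plan is to keep this weight inside an integral rather than take a supremum. First I would show $\theta \mapsto \pi_\theta$ is locally Lipschitz in $d_{2,A}$: from $\pi_\theta = \pi_\theta P_\theta$, a synchronous-coupling estimate of $d_{2,A}(\pi_{\theta_1}P_{\theta_1},\pi_{\theta_1}P_{\theta_2})$ together with the $d_{2,A}$-contraction of $P_{\theta_2}$ gives $d_{2,A}(\pi_{\theta_0+\Delta\theta},\pi_{\theta_0}) \leq \tfrac{K_\Theta}{1 - K_X}(\int_X\|B\|_\Theta^{2}\,\mathrm{d}\pi_{\theta_0+\Delta\theta})^{1/2}\|\Delta\theta\|_\Theta$, where the second-moment factor is bounded uniformly for small $\|\Delta\theta\|_\Theta$ because the Lyapunov constant $C(\theta)$ is continuous at $\theta_0$. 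Second, a mean-value estimate in $\theta$ analogous to the previous paragraph yields the pointwise bound $\|\tfrac{\partial}{\partial x}(P_{\theta_0+\Delta\theta} - P_{\theta_0})e(z)A(z)^{-1}\| \leq C\|e\|_{\mathcal{E}^{2}}(1 + \|B(z)\|_\Theta)\|\Delta\theta\|_\Theta$, using $L_X L_\Theta \leq K_X K_\Theta$ and $L_{X,\Theta} \leq K_{X,\Theta}$. Finally, evaluating $(\pi_{\theta_0+\Delta\theta} - \pi_{\theta_0})((P_{\theta_0+\Delta\theta} - P_{\theta_0})e)$ on a $d_{2,A}$-optimal coupling, bounding the increment of $(P_{\theta_0+\Delta\theta}-P_{\theta_0})e$ along geodesics by the path integral of its local Lipschitz constant (where $\|B\|_\Theta$ grows at most linearly in arc length), and applying Cauchy--Schwarz, I expect a bound of order $\|\Delta\theta\|_\Theta\, d_{2,A}(\pi_{\theta_0+\Delta\theta},\pi_{\theta_0}) = O(\|\Delta\theta\|_\Theta^{2})$, establishing part \ref{dp-theta-continuous-at-pi}.
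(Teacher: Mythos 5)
Your proposal tracks the paper's own proof very closely: the same verification of Assumption \ref{state-space}; part \ref{p-is-inv} of Assumption \ref{asu:p} via Proposition \ref{contraction-strong} (your explicit check that $V(x)=1+d_A(x,x_0)$ is a $2$-Lyapunov function is a welcome detail the paper leaves implicit); the deviation bound in part \ref{deviation-bound} by geometric decay of the three seminorms, with exactly the paper's recursions (inequalities (\ref{e1-term})--(\ref{e3-term}) and Proposition \ref{contract-e2-bds}); part \ref{p-theta-differentiable-at-pi} by a second-order Taylor bound in $\theta$ integrated against $\pi_{\theta_0}$; and part \ref{dp-theta-continuous-at-pi} from the same three ingredients the paper uses --- the $x$-derivative bound on $(P_{\theta_0+\Delta\theta}-P_{\theta_0})e$ with constant $\max\{K_{X,\Theta},K_XK_\Theta\}$, Lipschitz dependence of $\theta\mapsto\pi_\theta$ in $d_{2,A}$, and a quadratic coupling estimate, the last being precisely the paper's Proposition \ref{quadratic-estimate} re-derived inline. (Your triangle inequality for $d_{2,A}(\pi_{\theta_0+\Delta\theta},\pi_{\theta_0})$ puts the $\|B\|$-moment under $\pi_{\theta_0+\Delta\theta}$ rather than $\pi_{\theta_0}$, which costs the extra uniform-boundedness step you correctly flag and sketch; the paper's ordering of the triangle inequality avoids this.)

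There is, however, one genuine gap: your justification of differentiation under the integral sign, ``dominated convergence from the boundedness of the $L_{X^{i},\Theta^{j}}$.'' Boundedness of $\int_\Xi\|\cdot\|\,\mathrm{d}\nu$ at each $(x,\theta)$ does not produce a pointwise-in-$\xi$ dominating function valid on a neighborhood, which is what dominated convergence requires. For the \emph{first-order} interchanges your claim can be repaired without new hypotheses: $L_X$ and $L_\Theta$ are $L^2(\nu)$-type bounds, so the relevant difference-quotient families are bounded in $L^2(\nu)$, hence uniformly integrable, and Vitali's convergence theorem applies. But the second-order quantities $L_{X^2}$, $L_{X,\Theta}$, $L_{\Theta^2}$ are only $L^1(\nu)$-type, and an $L^1$-bounded family of difference quotients need not be uniformly integrable, so pointwise convergence of the integrands does not yield convergence of their integrals. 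These second-order interchanges are exactly what you need for the recursion giving the decay of $\|\tfrac{\partial^2}{\partial x^2}P_{\theta_0}^{n}e\|_{A,A}$ in part \ref{deviation-bound}, and for differentiating in $x$ the fundamental-theorem-of-calculus representation of $(P_{\theta_0+\Delta\theta}-P_{\theta_0})e$ in part \ref{dp-theta-continuous-at-pi}, so the gap propagates into the core estimates. The paper closes it with Proposition \ref{prop:interchange-e2}, which rests on the $L^1(\nu)$-continuity criterion of Theorem \ref{thm:pflug-interchange}; this is precisely where the continuity of the $L_{X^{i},\Theta^{j}}$ (part i of Assumption \ref{asu:f}) enters. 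The tell-tale symptom in your write-up is that this continuity hypothesis is never used anywhere: you should replace the dominated-convergence appeal by the $L^1(\nu)$-continuity (or an equivalent Vitali/uniform-integrability) argument that exploits it.
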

To show this, several preliminary results will be used. 
The first is concerned with how 
$P_{\theta}$ varies with $\theta$.  Recall that $x_0$ denotes an arbitrary basepoint.
\begin{prop}
  \label{prop:sufficient-lipschitz-parameter}
  Let 
  $P_{\theta}$
  be the transition kernel of the recursion 
  (\ref{parameterized-state-rep}), where
  \begin{enumerate}
      \renewcommand{\theenumi}{\roman{enumi}}
  \item The map
    $\xi \mapsto d_A(x_0,f(x,\xi,\theta))^{p}$
    is $\nu$-integrable for each 
    $(x,\theta) \in X\times\Theta$,
  \item The function $(x,\theta) \mapsto f(x,\xi,\theta)$ is $C^{1}$ for each $\xi \in \Xi$,
  \item $
  \sup\limits_{(x,\theta)\in X\times\Theta}\sup\limits_{\|u_{\theta}\|=1}
  \left(
    \displaystyle\int_{\Xi}
    \|A(f(x,\xi,\theta))
    \tfrac{\partial f}{\partial \theta}(x,\xi,\theta)
    B(x)^{-1}
    u_{\theta}\|^{p}
    \, \mathrm{d}\nu(\xi)
  \right)^{1/p} \leq K$.
  \end{enumerate}
  Fix a $\theta_0 \in \Theta$.
  Then for all $\Delta\theta$ sufficiently small and all
  $\mu \in \mathcal{P}_{p,A}(X)$
  the inequality
  $
  d_{p,A}(\mu P_{\theta_0},\mu P_{\theta_0+\Delta\theta})
  \leq
  K \|B \Delta\theta \|_{L^{p}(\mu)}
  $ holds.
\end{prop}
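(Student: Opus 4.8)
The plan is to bound the Wasserstein distance by exhibiting a single explicit coupling of $\mu P_{\theta_0}$ and $\mu P_{\theta_0+\Delta\theta}$ via common random numbers, in the same spirit as the proof of Proposition \ref{contraction-ptwise}. Concretely, I would draw $x\sim\mu$ and $\xi\sim\nu$ independently and push the product measure $\mu\otimes\nu$ forward under the map $(x,\xi)\mapsto\big(f(x,\xi,\theta_0),\,f(x,\xi,\theta_0+\Delta\theta)\big)$. Testing against functions on each coordinate shows the two marginals are $\mu P_{\theta_0}$ and $\mu P_{\theta_0+\Delta\theta}$, so this measure lies in $\Gamma(\mu P_{\theta_0},\mu P_{\theta_0+\Delta\theta})$ and yields
$$
d_{p,A}(\mu P_{\theta_0},\mu P_{\theta_0+\Delta\theta})^{p}
\leq
\int_{X}\int_{\Xi}
d_A\big(f(x,\xi,\theta_0),f(x,\xi,\theta_0+\Delta\theta)\big)^{p}
\,\mathrm{d}\nu(\xi)\,\mathrm{d}\mu(x).
$$

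Next I would estimate the integrand pointwise. Since $\Theta$ is open and contains $\theta_0$, for all $\Delta\theta$ sufficiently small the segment $\{\theta_s := \theta_0+s\Delta\theta : s\in[0,1]\}$ lies in $\Theta$; this is precisely where the ``sufficiently small'' qualifier is used. For such $\Delta\theta$, hypothesis (ii) makes $s\mapsto f(x,\xi,\theta_s)$ a $C^{1}$ curve from $f(x,\xi,\theta_0)$ to $f(x,\xi,\theta_0+\Delta\theta)$, so bounding $d_A$ by the length of this particular curve (and differentiating through $\theta$, inserting $B(x)^{-1}B(x)$) gives
$$
d_A\big(f(x,\xi,\theta_0),f(x,\xi,\theta_0+\Delta\theta)\big)
\leq
\int_{0}^{1}
\Big\|
A\big(f(x,\xi,\theta_s)\big)\tfrac{\partial f}{\partial\theta}(x,\xi,\theta_s)B(x)^{-1}\,B(x)\Delta\theta
\Big\|\,\mathrm{d}s.
$$
Applying Jensen's inequality to the $s$-average moves the $p$-th power inside, exactly as in Proposition \ref{contraction-ptwise}. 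Substituting into the coupling bound and using Tonelli (all integrands nonnegative) to pull the $s$-integral outside leaves, for each fixed $x$ and $s$, the inner $\xi$-integral. Writing $B(x)\Delta\theta=\|B(x)\Delta\theta\|\,u_\theta$ with $u_\theta$ a unit vector and using linearity of $u\mapsto A(f)\tfrac{\partial f}{\partial\theta}B(x)^{-1}u$ together with hypothesis (iii) at the parameter $\theta_s$ gives
$$
\int_{\Xi}
\Big\|
A\big(f(x,\xi,\theta_s)\big)\tfrac{\partial f}{\partial\theta}(x,\xi,\theta_s)B(x)^{-1}\,B(x)\Delta\theta
\Big\|^{p}\,\mathrm{d}\nu(\xi)
\leq
K^{p}\|B(x)\Delta\theta\|^{p}.
$$
Since $\int_0^1\mathrm{d}s=1$ and $\int_X\|B(x)\Delta\theta\|^p\,\mathrm{d}\mu(x)=\|B\Delta\theta\|_{L^{p}(\mu)}^{p}$, taking $p$-th roots delivers the claimed inequality.

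The computation is really a parametrized version of Proposition \ref{contraction-ptwise}, with the connecting curve varying $\theta$ rather than $x$, so no step is genuinely deep. The points that require care are: (a) checking that the common-random-numbers pushforward has the correct two marginals; (b) the ``sufficiently small $\Delta\theta$'' reduction, which guarantees $\theta_s\in\Theta$ so that the uniform bound (iii) applies along the entire segment; and (c) the Tonelli interchange, legitimate because the integrand is nonnegative and jointly measurable in $(s,x,\xi)$ (continuous in $s$ by (ii), measurable in $\xi$ by measurability of $f$). Hypothesis (i) ensures the pushforward measures are well-defined with the relevant finite moments so that the Wasserstein distance is meaningful, and the inequality holds trivially when the right-hand side is infinite. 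The main obstacle, such as it is, is simply keeping the norms and the insertion of $B(x)^{-1}B(x)$ bookkept correctly so that hypothesis (iii) can be invoked verbatim.
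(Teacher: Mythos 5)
Your proposal is correct and follows essentially the same route as the paper's proof: the common-random-numbers coupling obtained by pushing $\mu\otimes\nu$ forward under $(x,\xi)\mapsto\big(f(x,\xi,\theta_0),f(x,\xi,\theta_0+\Delta\theta)\big)$, the straight-line segment $\theta_0+s\Delta\theta$ in parameter space giving a connecting curve whose length bounds $d_A$, then Jensen, interchange of integrals, and hypothesis (iii) after inserting $B(x)^{-1}B(x)$. Your added care about the marginals, the Tonelli justification, and where ``sufficiently small'' is used only makes explicit what the paper leaves implicit in the phrase ``reasoning as in Proposition \ref{contraction-ptwise}.''
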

\begin{proof}
  Let 
  $\Delta\theta$
  be so small that 
  $\theta_0+t\Delta\theta \in \Theta$ for $t\in[0,1]$.
  If $(x,\xi)$ is distributed according to $\mu \times \nu$ then
  the law of 
  $(f(x,\xi, \theta_0), f(x,\xi,\theta_0 + \Delta\theta))$
  is a coupling of $\mu P_{\theta_0}$ and $\mu P_{\theta_0 + \Delta\theta}$.
  Let
  $\gamma :[0,1] \to \mathbb{R}^{n_\Theta}$
be 
$\gamma(t) = \theta_0 + t\Delta\theta$.
Then
$t \mapsto f(x,\xi,\gamma(t))$,
determines a curve from
$f(x,\xi,\theta_0)$
to
$f(x,\xi,\theta_0+\Delta\theta)$, and reasoning as in Proposition \ref{contraction-ptwise},
\begin{align*}
  &\left(\int_{X}
  \int_{\Xi}
  d_A(f(x,\xi,\theta_0),f(x,\xi,\theta_0+\Delta\theta))^{p}
  \, \mathrm{d}\nu(\xi)\, \mathrm{d} \mu(x)
\right)^{1/p}
\\&\quad\quad\leq
  \left(\int_{X}
  \int_{\Xi}
  \left(
    \int_{0}^{1}
    \|
    A(f(x,\xi,\gamma(t))
    \tfrac{\partial f}{\partial \theta}
    (x,\xi,\gamma(t))
    \Delta\theta
    \|
    \, \mathrm{d}t\right)^{p}\, \mathrm{d}\nu(\xi)\, \mathrm{d}\mu(x)
\right)^{1/p}
\\&\quad\quad\leq
  \left(
    \int_{0}^{1}
    \int_{X}
    \int_{\Xi}
    \|
    A(f(x,\xi,\gamma(t)))
    \tfrac{\partial f}{\partial \theta}
    (x,\xi,\gamma(t))
    \Delta\theta
    \|^{p}\, \mathrm{d}\nu(\xi)\,\mathrm{d}\mu(x)
    \, \mathrm{d}t
\right)^{1/p} \\
&\quad\quad\leq 
\left(\int_{0}^{1}\int_{X}
       K^{p} \|B(x) \Delta\theta \|^{p} \, \mathrm{d} t
\right)^{1/p}
  =
  K\|B \Delta\theta\|_{L^{p}(\mu)}
\end{align*}
\end{proof}

The continuity assumptions on the $L_{X^{i},\Theta^{j}}$ 
ensure 
that integration and differentiation can be exchanged. 
For discussing the differentiability it will be useful to introduce the following concept. 
A function 
$f:X\times\Xi\to \mathbb{R}^{n}$
 is said to be \textit{L$^{1}(\nu)$-continuous} when 
\begin{enumerate}
   \renewcommand{\theenumi}{\roman{enumi}}
\item $x \mapsto f(x,\xi)$ is continuous for each $\xi\in\Xi$,
\item $\xi \mapsto f(x,\xi)$ is measurable for each $x \in X$,
\item $x \mapsto \int_{\Xi}\|f(x,\xi)\|\, \mathrm{d}\nu(\xi)$ is continuous. 
\end{enumerate}
The following two properties are not difficult to show.
(i) If $f,g$ are $L^1(\nu)$-continuous functions 
then so are $\alpha f + \beta g$ for any numbers $\alpha,\beta$.
(ii) A monotonicity property holds:
If 
$f$
is a function satisfying the first two requirements of 
$L^{1}(\nu)$-continuity and
if $\|f(x,\xi)\| \leq \|g(x,\xi)\|$ for an $L^{1}(\nu)$-continuous function $g$, then $f$ is $L^1(\nu)$-continuous.

Using this notion we state a condition for interchanging derivatives and integrals which is a generalized form of a result from \cite{pflug-book}, that considers a scalar parameter.
\begin{thm}[\cite{pflug-book}, Theorem 3.13]\label{thm:pflug-interchange}
  Let 
  $(\Xi,\Sigma,\nu)$
  be a probability space and
  $W \subseteq \mathbb{R}^{n}$
  be an open set. 
  Let 
  $h:W\times\Xi \to \mathbb{R}^{m}$
  be a function such that
\begin{enumerate}\renewcommand{\theenumi}{\roman{enumi}}
 \item 
   $\xi \mapsto h(w,\xi)$
   is integrable for each $w \in W$,
\item 
  $w \mapsto h(w,\xi)$ 
  is continuously differentiable for each $\xi \in \Xi$,
\item
  $\frac{\partial h}{\partial w}$
  is $L^{1}(\nu)$-continuous.
\end{enumerate}
  Then
  $
  \frac{\partial}{\partial w}\int_{\Xi}h(w,\xi)\, \mathrm{d}\nu(\xi) = 
  \int_{\Xi}\frac{\partial h}{\partial w}(w,\xi)\, \mathrm{d}\nu(\xi)
  $ for all $w\in W$.
\end{thm}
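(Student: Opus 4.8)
The plan is to show that $\Phi(w) := \int_\Xi h(w,\xi)\,\mathrm{d}\nu(\xi)$ is continuously differentiable with derivative $D(w) := \int_\Xi \tfrac{\partial h}{\partial w}(w,\xi)\,\mathrm{d}\nu(\xi)$; since a map whose partial derivatives exist and are continuous is $C^1$, it suffices to compute the directional derivatives of $\Phi$ and to verify that $w \mapsto D(w)$ is continuous. Fix $w_0 \in W$, a unit vector $u \in \mathbb{R}^n$, and a null sequence $t_k \to 0$ small enough that the segment from $w_0$ to $w_0 + t_k u$ stays in $W$. Because $w \mapsto h(w,\xi)$ is $C^1$, the fundamental theorem of calculus gives, for each fixed $\xi$,
\[
  \frac{h(w_0 + t_k u,\xi) - h(w_0,\xi)}{t_k}
  =
  F_k(\xi),
  \qquad
  F_k(\xi) := \int_0^1 \tfrac{\partial h}{\partial w}(w_0 + r t_k u,\xi)\,u\,\mathrm{d}r,
\]
so the difference quotient of $\Phi$ equals $\int_\Xi F_k(\xi)\,\mathrm{d}\nu(\xi)$. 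By continuity of $\tfrac{\partial h}{\partial w}$ in $w$, one has $F_k(\xi) \to \tfrac{\partial h}{\partial w}(w_0,\xi)\,u$ pointwise in $\xi$.

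The crux is passing the limit inside the integral. There need be no single $\nu$-integrable function dominating $\|\tfrac{\partial h}{\partial w}(w,\xi)\|$ uniformly for $w$ near $w_0$, so I would invoke the generalized (Pratt) dominated convergence theorem, in which the dominating functions are allowed to vary. Set $G_k(\xi) := \int_0^1 \|\tfrac{\partial h}{\partial w}(w_0 + r t_k u,\xi)\|\,\mathrm{d}r$, which satisfy $\|F_k(\xi)\| \le G_k(\xi)$ and $G_k(\xi) \to \|\tfrac{\partial h}{\partial w}(w_0,\xi)\| =: G_\infty(\xi)$ pointwise. The hypothesis required by Pratt's theorem, namely $\int_\Xi G_k\,\mathrm{d}\nu \to \int_\Xi G_\infty\,\mathrm{d}\nu$, follows from Tonelli's theorem together with the $L^1(\nu)$-continuity of $\tfrac{\partial h}{\partial w}$: writing $\psi(w) := \int_\Xi \|\tfrac{\partial h}{\partial w}(w,\xi)\|\,\mathrm{d}\nu(\xi)$, which is continuous by assumption (iii), one gets $\int_\Xi G_k\,\mathrm{d}\nu = \int_0^1 \psi(w_0 + r t_k u)\,\mathrm{d}r \to \psi(w_0) = \int_\Xi G_\infty\,\mathrm{d}\nu$. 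Pratt's theorem then yields $\int_\Xi F_k\,\mathrm{d}\nu \to \int_\Xi \tfrac{\partial h}{\partial w}(w_0,\xi)\,u\,\mathrm{d}\nu(\xi)$, so the directional derivative of $\Phi$ at $w_0$ in the direction $u$ exists and equals $D(w_0)\,u$.

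It remains to check that $D$ is continuous. Along any sequence $w_k \to w_0$, continuity of $\tfrac{\partial h}{\partial w}$ in $w$ gives pointwise convergence of the integrand, while the bound $\|\tfrac{\partial h}{\partial w}(w_k,\cdot)\|$ has integral $\psi(w_k) \to \psi(w_0)$ by $L^1(\nu)$-continuity; a second application of the generalized dominated convergence theorem gives $D(w_k) \to D(w_0)$. Letting $u$ range over the coordinate directions, the partial derivatives of $\Phi$ exist and coincide with the continuous map $D$, whence $\Phi$ is $C^1$ with $\tfrac{\partial \Phi}{\partial w} = D$, which is the assertion.

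I expect the only genuine obstacle to be the absence of a uniform integrable dominating function: ordinary dominated convergence does not apply, and the whole point of the $L^1(\nu)$-continuity hypothesis is to supply, via Tonelli and the continuity of the scalar integral $\psi$, exactly the convergence $\int_\Xi G_k\,\mathrm{d}\nu \to \int_\Xi G_\infty\,\mathrm{d}\nu$ that the generalized (Pratt) theorem requires. The vector-valued nature of $h$ causes no extra difficulty, as one may apply the scalar generalized theorem to $\|F_k - \tfrac{\partial h}{\partial w}(w_0,\cdot)u\|$ with the dominators $G_k + G_\infty$.
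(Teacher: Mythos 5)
Your proposal cannot match the paper's own proof because the paper has none: Theorem \ref{thm:pflug-interchange} is imported as a citation (Pflug's book, Theorem 3.13), and the paper even notes that what it states is a \emph{generalized} form (vector parameter $w\in\mathbb{R}^{n}$, $\mathbb{R}^{m}$-valued $h$) of the book's scalar-parameter result, without supplying the generalization's proof. So your argument should be judged on its own, and it is correct. The structure---difference quotients via the fundamental theorem of calculus, Pratt's generalized dominated convergence with the varying dominators $G_{k}$, Tonelli to rewrite $\int_{\Xi}G_{k}\,\mathrm{d}\nu$ as $\int_{0}^{1}\psi(w_{0}+rt_{k}u)\,\mathrm{d}r$, and continuity of $\psi$ supplied by hypothesis (iii)---is exactly the mechanism by which the $L^{1}(\nu)$-continuity assumption substitutes for a single integrable dominating function, and your reduction of the vector-valued case to the scalar Pratt theorem applied to $\|F_{k}-\tfrac{\partial h}{\partial w}(w_{0},\cdot)u\|$ with dominators $G_{k}+G_{\infty}$ is the right way to close that loop; the final passage from continuous directional/partial derivatives to $C^{1}$ differentiability of $\Phi$ is standard. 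Two small points you leave implicit, both harmless: (a) Tonelli needs joint measurability of $(r,\xi)\mapsto\|\tfrac{\partial h}{\partial w}(w_{0}+rt_{k}u,\xi)\|$, which holds because it is a Carath\'eodory function (continuous in $r$, measurable in $\xi$)---the paper glosses the identical point in the proof of Proposition \ref{contraction-ptwise}; and (b) the convergence $\int_{0}^{1}\psi(w_{0}+rt_{k}u)\,\mathrm{d}r\to\psi(w_{0})$ uses that the whole segment eventually lies in any prescribed neighborhood of $w_{0}$, so continuity of $\psi$ at $w_{0}$ alone suffices---worth one sentence, but not a gap.
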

This criteria has the useful property that once it is established for 
$f$
it is easily extended to the function 
$e \circ f$.
This is shown in the next proposition.
\begin{prop}\label{prop:interchange-e2}
  Let Assumptions 
  \ref{asu:lipVA}, 
  \ref{asu:differentiable},
  \ref{asu:lipVB} and
  \ref{asu:f} hold.
  If
  $e \in \mathcal{E}^{2}$
  and
  $i+j \leq 2$ 
  then, for any $(x,\theta)\in X\times\Theta$,
  $
  \tfrac{\partial^{i+j}}{\partial x^{i}\partial \theta^{j}}
  \int_\Xi e(f(x,\xi,\theta))\, \mathrm{d}\nu(\xi) = 
  \int_\Xi
  \tfrac{\partial^{i+j}}{\partial x^{i}\partial \theta^{j}}e(f(x,\xi,\theta))
  \, \mathrm{d}\nu(\xi)$.
\end{prop}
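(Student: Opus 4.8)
The plan is to reduce the statement to two applications of the interchange theorem (Theorem \ref{thm:pflug-interchange}), building the second-order interchange out of the first. Throughout I treat $w=(x,\theta)$ as a single variable ranging over the open set $U\times\Theta$, where $U\supseteq X$ is the open set on which the extension $\overline{f}$ is differentiable, so that Theorem \ref{thm:pflug-interchange} applies; the values at points of $X$ are recovered by restriction. Write $g(w,\xi)=e(f(w,\xi))$, which is $C^{2}$ in $w$ for each $\xi$ by the chain rule, since $e$ is $C^{2}$ and $f$ is $C^{2}$ in $w$ by Assumption \ref{asu:differentiable}. Interchanging a matrix-valued derivative with the integral reduces to the scalar case of Theorem \ref{thm:pflug-interchange} after identifying the relevant space of linear maps with a Euclidean space, so I only need to verify the three hypotheses of that theorem for $g$ and then for its first derivative. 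The base case $i+j=0$ and the integrability hypothesis are handled at once: since $e\in\mathcal{E}^{2}$ gives $|e(y)|\leq\|e\|_{A}(1+d_A(x_0,y))$ and Assumption \ref{asu:differentiable}(i) makes $\xi\mapsto d_A(x,f(x,\xi,\theta))^{2}$ integrable (hence $d_A(x,f(\cdot))$ integrable on the probability space, and then $d_A(x_0,f(\cdot))$ integrable by the triangle inequality), the map $\xi\mapsto e(f(w,\xi))$ is $\nu$-integrable for every $w$.

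The heart of the proof is verifying that $\frac{\partial g}{\partial w}$ and $\frac{\partial^{2}g}{\partial w^{2}}$ are $L^{1}(\nu)$-continuous. Each satisfies the first two requirements of $L^{1}(\nu)$-continuity (continuity in $w$ for each $\xi$, measurability in $\xi$ for each $w$) directly, so by the monotonicity property it suffices to dominate their norms by $L^{1}(\nu)$-continuous functions. The key algebraic device is to insert the identities $A(f)^{-1}A(f)$ and $A(x)^{-1}A(x)$ (and the analogous $B$'s) into the chain-rule expressions so that every factor is expressed through the quantities inside $L_X,L_\Theta,L_{X^{2}},L_{X,\Theta},L_{\Theta^{2}}$. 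For the first derivative, writing $\Phi_x=A(f)\frac{\partial f}{\partial x}A(x)^{-1}$ gives
\[
\left\|\tfrac{\partial e}{\partial x}(f)\tfrac{\partial f}{\partial x}\right\|
\leq
\|\tfrac{\partial e}{\partial x}\|_{A}\,\|A(x)\|\,\|\Phi_x\|,
\]
and likewise for the $\theta$-component with $\Phi_\theta=A(f)\frac{\partial f}{\partial\theta}B(x)^{-1}$ and the weight $\|B(x)\|$. For the second derivative, the chain rule produces a term quadratic in first derivatives, bounded by $\|\frac{\partial^{2}e}{\partial x^{2}}\|_{A,A}\|A(x)\|^{2}\|\Phi_x\|^{2}$ (with mixed and $\theta$ analogues involving $\|\Phi_x\|\|\Phi_\theta\|$ and $\|\Phi_\theta\|^{2}$), plus a genuine second-derivative term bounded by $\|\frac{\partial e}{\partial x}\|_{A}\|A(x)\|^{2}\|\Psi_{x^{2}}\|$, where $\Psi_{x^{2}}=A(f)\frac{\partial^{2}f}{\partial x^{2}}(A(x)^{-1}\oplus A(x)^{-1})$, and analogously $\Psi_{x\theta},\Psi_{\theta^{2}}$ for the mixed and pure-$\theta$ cases.

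The remaining point, and the main obstacle, is that $L_X$ and $L_\Theta$ are defined with a square (an $L^{2}(\nu)$ norm), so I only have continuity of the $L^{2}(\nu)$-norms of $\Phi_x,\Phi_\theta$, not of their $L^{1}(\nu)$-norms; the quadratic second-order terms face the same difficulty. To convert, I would dominate $\|\Phi_x\|\leq\frac{1}{2}(1+\|\Phi_x\|^{2})$ and the products $\|\Phi_x\|\|\Phi_\theta\|\leq\frac{1}{2}(\|\Phi_x\|^{2}+\|\Phi_\theta\|^{2})$; each $\|\Phi\|^{2}$ is $L^{1}(\nu)$-continuous because its integral is $L_X^{2}$ (resp.\ $L_\Theta^{2}$), which is continuous by Assumption \ref{asu:f}(i), and constants are trivially $L^{1}(\nu)$-continuous, so the linearity property closes these bounds. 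By contrast, the pure second-derivative quantities $\|\Psi_{x^{2}}\|,\|\Psi_{x\theta}\|,\|\Psi_{\theta^{2}}\|$ are directly $L^{1}(\nu)$-continuous, since their integrals are exactly $L_{X^{2}},L_{X,\Theta},L_{\Theta^{2}}$.

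Finally, multiplying by the continuous scalars $\|A(x)\|$, $\|B(x)\|$ and their products and squares (continuity of $A$ from Assumption \ref{asu:lipVA}, continuity of $\|B(\cdot)\|$ from the $d_A$-Lipschitz property in Assumption \ref{asu:lipVB}) preserves $L^{1}(\nu)$-continuity, and the linearity and monotonicity properties then give $L^{1}(\nu)$-continuity of $\frac{\partial g}{\partial w}$ and $\frac{\partial^{2}g}{\partial w^{2}}$. Applying Theorem \ref{thm:pflug-interchange} first to $g$ yields the interchange for all $i+j\leq 1$ (its output $\frac{\partial g}{\partial w}$ being integrable for each $w$ precisely because it is $L^{1}(\nu)$-continuous), and applying it again to $\frac{\partial g}{\partial w}$ yields the interchange for all $i+j\leq 2$, completing the proof after noting that the partial derivatives $\frac{\partial}{\partial x},\frac{\partial}{\partial\theta}$ are components of the total derivative in $w$.
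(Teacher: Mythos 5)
Your proposal follows essentially the same route as the paper's proof: verify the hypotheses of Theorem \ref{thm:pflug-interchange} for $e\circ f$ (and then its derivative) by inserting the identities $A(f)^{-1}A(f)$, $A(x)^{-1}A(x)$, $B(x)^{-1}B(x)$ into the chain-rule expressions, dominating the result by the quantities appearing in the $L_{X^{i},\Theta^{j}}$, and invoking the linearity and monotonicity properties of $L^{1}(\nu)$-continuity together with part (i) of Assumption \ref{asu:f}. Your dominations $\|\Phi_x\|\leq\tfrac{1}{2}(1+\|\Phi_x\|^{2})$ and $\|\Phi_x\|\|\Phi_\theta\|\leq\tfrac{1}{2}(\|\Phi_x\|^{2}+\|\Phi_\theta\|^{2})$ in fact make explicit a point the paper passes over silently, namely that $L_X$ and $L_\Theta$ control only the $L^{2}(\nu)$ norms of $\Phi_x,\Phi_\theta$ while $L^{1}(\nu)$-continuity of the dominating function is what the monotonicity property requires, so your write-up is, if anything, more complete at that step.
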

\begin{proof}
  Consider the derivative 
  $\frac{\partial}{\partial x}$. To apply Theorem \ref{thm:pflug-interchange},
  it is shown that the map 
  $x \mapsto 
  \int_{\Xi}
  \|
  \tfrac{\partial e}{\partial x}(f(x,\xi,\theta))
  \tfrac{\partial f}{\partial x}(x,\xi,\theta)
  \|
  \, \mathrm{d}\nu(\xi)
  $ is continuous.
  Noting that 
  $$
  \|
  \tfrac{\partial e}{\partial x}(f(x,\xi,\theta))
  \tfrac{\partial f}{\partial x}(x,\xi,\theta)
  \|
  \leq
  \|\tfrac{\partial e}{\partial x}\|_{A}
  \|A(f(x,\xi,\theta))\tfrac{\partial f}{\partial x}(x,\xi,\theta)A(x)^{-1}\|
  \|A(x)\|
  $$
  the result follows by assumption on 
  $\frac{\partial f}{\partial x}$ 
  and the monotonicity property of $L^{1}(\nu)$-continuity.
    Next, consider
  $\frac{\partial^{2}}{\partial \theta^{2}}$.
  We have that
  \begin{align*}
    \|\tfrac{\partial^{2}}{\partial \theta^{2}}e(f(x,\xi,\theta))\|
    &\leq
      \|\tfrac{\partial^{2} e}{\partial x^{2}}\|_{A,A}
      \|
      A(f(x,\xi,\theta))
      \tfrac{\partial f}{\partial \theta}(x,\xi,\theta)
      B(x)^{-1}\|^{2}
      \|B(x)\|^{2}\\
    &\quad+
      \|\tfrac{\partial e}{\partial  x}\|_{A}
      \|
      A(f(x,\xi,\theta))
      \tfrac{\partial^{2} f}{\partial \theta^{2}}(x,\xi,\theta)
      (B(x)^{-1}\oplus B(x)^{-1})
      \|\|B(x)\|^{2}
  \end{align*}
  The $L^{1}(\nu)$-continuity of the left side follows
  by the $L^{1}(\nu)$-continuity of the right side together with the monotonicity property.  Similar reasoning yields the other cases.
\end{proof}
Using this result,
we can obtain the contraction property of $P$
with respect to the class
$\mathcal{E}^{2}$,
and find some bounds on the second order derivatives of $P_{\theta}e$:
\begin{prop}\label{contract-e2-bds}
  Let Assumptions
  \ref{asu:differentiable} - \ref{asu:f}
  be in effect.
  For 
  $e \in\mathcal{E}^{2}$ and $\theta\in\Theta$,
\begin{enumerate}
  \renewcommand{\theenumi}{\roman{enumi}}
\item\label{dx2p-bd} 
  $\|\tfrac{\partial^{2}}{\partial x^{2}}P_{\theta}e\|_{A,A}
  \leq
  K_{X^{2}}\|\tfrac{\partial e}{\partial x}\|_{A} 
  +
  K_{X}^{2}\|\tfrac{\partial^{2}e}{\partial x^{2}}\|_{A,A}$,
\item\label{dt2p-bd} 
  $\|\frac{\partial^{2}}{\partial \theta^{2}}P_{\theta}e\|_{B,B}
  \leq
  K_{\Theta^{2}}\|\tfrac{\partial e}{\partial x}\|_{A} 
  +
  K_{\Theta}^{2}\|\tfrac{\partial^{2}e}{\partial x^{2}}\|_{A,A}$,
\item\label{dxdtp-bd} 
  $\|\frac{\partial^{2}}{\partial x\partial \theta}P_{\theta}e\|_{A,B} 
  \leq
  K_{X,\Theta}\|\tfrac{\partial e}{\partial x}\|_{A} + K_XK_{\Theta}\|\tfrac{\partial^{2}e}{\partial x^{2}}\|_{A,A}$.
  \end{enumerate}      
  Furthermore, for each $\theta$ there is an $L_{\theta} \geq 0$ such that 
  $\|P_{\theta}e\|_{\mathcal{E}^{2}} \leq L_{\theta} \|e\|_{\mathcal{E}^{2}}$
  for all $e \in \mathcal{E}^{2}$.
\end{prop}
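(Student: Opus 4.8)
The plan is to differentiate under the integral sign using Proposition \ref{prop:interchange-e2}, expand the derivatives of the composition $e\circ f$ by the second-order chain rule, and then insert the weight matrices $A$ and $B$ so that each resulting factor matches either a seminorm in $\|e\|_{\mathcal{E}^2}$ (evaluated at the image point $f$) or one of the integrands defining the functions $L_{X^i,\Theta^j}$.

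For part (i), Proposition \ref{prop:interchange-e2} gives $\frac{\partial^2}{\partial x^2}P_\theta e(x) = \int_\Xi \frac{\partial^2}{\partial x^2}[e(f(x,\xi,\theta))]\,\mathrm{d}\nu(\xi)$, and the chain rule expresses the integrand, applied to directions $u,v$, as
$$
\tfrac{\partial^2 e}{\partial x^2}(f)[\tfrac{\partial f}{\partial x}u,\tfrac{\partial f}{\partial x}v] + \tfrac{\partial e}{\partial x}(f)\tfrac{\partial^2 f}{\partial x^2}[u,v].
$$
To estimate $\|\cdot\|_{A(x),A(x)}$ I would substitute $u=A(x)^{-1}u'$, $v=A(x)^{-1}v'$ with $\|u'\|=\|v'\|=1$ and insert $A(f)^{-1}A(f)$ in each argument slot. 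The first term then factors through $A(f)\tfrac{\partial f}{\partial x}A(x)^{-1}u'$ and its $v'$-analogue, so it is bounded by $\|\tfrac{\partial^2 e}{\partial x^2}\|_{A,A}$ times the product of those two vector norms; integrating and applying Cauchy--Schwarz in $\xi$ turns this product into $L_X(x,\theta)^2\le K_X^2$. The second term factors as $\tfrac{\partial e}{\partial x}(f)A(f)^{-1}$ times $A(f)\tfrac{\partial^2 f}{\partial x^2}(A(x)^{-1}\oplus A(x)^{-1})$, bounded by $\|\tfrac{\partial e}{\partial x}\|_A$ times the integrand defining $L_{X^2}$, so integration gives the coefficient $L_{X^2}(x,\theta)\le K_{X^2}$. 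Parts (ii) and (iii) are structurally identical: I only replace the relevant $A(x)^{-1}$ weights by $B(x)^{-1}$ and read off the coefficients from the definitions of $L_\Theta,L_{\Theta^2},L_{X,\Theta}$ together with the bounds in Assumption \ref{asu:f}.

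For the final inequality $\|P_\theta e\|_{\mathcal{E}^2}\le L_\theta\|e\|_{\mathcal{E}^2}$ I would sum the three seminorms. The first-order term $\|\tfrac{\partial}{\partial x}P_\theta e\|_A$ is handled by the same weighting trick applied once, yielding the bound $K_X\|\tfrac{\partial e}{\partial x}\|_A$, and the second-order term is exactly part (i). The only piece not covered by the chain-rule bookkeeping is $\|P_\theta e\|_A$. Here I would use $|e(y)|\le \|e\|_A(1+d_A(y,x_0))$ to obtain $|P_\theta e(x)| \le \|e\|_A\big(1 + \int_\Xi d_A(f(x,\xi,\theta),x_0)\,\mathrm{d}\nu\big)$, and then bound the remaining integral via the triangle inequality through the point $f(x_0,\xi,\theta)$: Proposition \ref{contraction-ptwise} controls $\int_\Xi d_A(f(x,\xi,\theta),f(x_0,\xi,\theta))\,\mathrm{d}\nu \le K_X\,d_A(x,x_0)$, while the integrability hypothesis in Assumption \ref{asu:differentiable} makes $\int_\Xi d_A(f(x_0,\xi,\theta),x_0)\,\mathrm{d}\nu$ a finite $\theta$-dependent constant. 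This gives $\|P_\theta e\|_A\le C_\theta\|e\|_A$, and collecting the three bounds produces the stated $L_\theta$.

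I expect the main obstacle to be the bookkeeping of the weight insertions rather than any deep difficulty: one must place each $A(x)^{-1}$, $A(f)$, or $B(x)^{-1}$ so that one factor becomes precisely a $\mathcal{E}^2$-seminorm of $e$ at the image point and the complementary factor becomes precisely the integrand defining the relevant $L_{X^i,\Theta^j}$. The Cauchy--Schwarz step that converts the product of two first-derivative factors into $L_X^2$ (and the analogous conversions for the mixed and $\theta$-derivatives) is where the squared exponents in the definitions of $L_X$ and $L_\Theta$ are essential, and the legitimacy of exchanging differentiation with integration throughout is exactly what Proposition \ref{prop:interchange-e2} supplies.
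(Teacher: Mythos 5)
Your proposal is correct and follows essentially the same route as the paper: differentiate under the integral via Proposition \ref{prop:interchange-e2}, expand by the second-order chain rule, insert $A(f)^{-1}A(f)$ together with the $A(x)^{-1}$/$B(x)^{-1}$ weights so that one factor becomes an $\mathcal{E}^2$-seminorm of $e$ and the other becomes the integrand of the relevant $L_{X^i,\Theta^j}$, then integrate (with Cauchy--Schwarz for the products of first-derivative factors); the paper writes this out for part (ii) as the decomposition $T_1+T_2$ and declares the other parts analogous. The only immaterial difference is in bounding $\|P_\theta e\|_A$: you use the growth bound $|e(y)|\le \|e\|_A(1+d_A(y,x_0))$ and a triangle inequality through $f(x_0,\xi,\theta)$, giving $\|P_\theta e\|_A \le C_\theta\|e\|_A$, while the paper uses the Lipschitz bound $|e(y)|\le |e(x_0)|+\|\tfrac{\partial e}{\partial x}\|_A d_A(x_0,y)$ and the contraction of $P_\theta$ on Lipschitz functions, giving $\|P_\theta e\|_A \le \|e\|_A + \max\{C_\theta,K_X\}\|\tfrac{\partial e}{\partial x}\|_A$ --- either version yields an admissible $L_\theta$.
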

\begin{proof}
  We show Part \ref{dt2p-bd};
  Parts \ref{dx2p-bd} and \ref{dxdtp-bd} are established similarly.
  We have
  \begin{align*}
    \tfrac{\partial^{2}}{\partial \theta^{2}}P_{\theta}e
(x)\left(B^{-1}(x) \oplus B^{-1}(x)\right)
  &= T_1 + T_2
\end{align*}
where  $T_1$ and $T_2$ are defined as
$$T_1 = 
\int_{\Xi}
\tfrac{\partial e}{\partial x}(f(x,\xi,\theta))
\tfrac{\partial^{2} f}{\partial \theta^{2}}(x,\xi,\theta)
\left(B(x)^{-1} \oplus B(x)^{-1}\right)
\, \mathrm{d}\nu(\xi),$$
$$T_2 = 
\int_{\Xi}
\tfrac{\partial^{2} e}{\partial x^{2}}(f(x,\xi,\theta))
\left(
  \tfrac{\partial f}{\partial \theta}(x,\xi,\theta)
  B^{-1}(x)
  \oplus 
  \tfrac{\partial f}{\partial \theta}(x,\xi,\theta)
  B^{-1}(x)\right)
\, \mathrm{d}\nu(\xi).$$
Using the identity 
$
A(f(x,\xi,\theta))^{-1}A(f(x,\xi,\theta))
\tfrac{\partial^{2}f}{\partial \theta^{2}}(x,\xi,\theta)
=
\tfrac{\partial^{2}f}{\partial \theta^{2}}(x,\xi,\theta)$,
we get
\begin{align}\label{t1-ineq}
  \|T_{1}\|
&\leq \|\tfrac{\partial e}{\partial x}\|_{A}K_{\Theta^{2}} 
\end{align}
while for $T_2$, use that 
$
 A(f(x,\xi,\theta))^{-1}
 A(f(x,\xi,\theta))
 \tfrac{\partial f}{\partial \theta}(x,\xi,\theta) 
 =
 \tfrac{\partial f}{\partial \theta}(x,\xi,\theta)
$ to get
\begin{align*}
  \|T_2\| 
  &\leq
    \|\tfrac{\partial^{2}e}{\partial x^{2}}\|_{A,A}
    \left(\int_{\Xi}
    \|
    A(f(x,\xi))
    \tfrac{\partial f}{\partial \theta}(x,\xi)
    B^{-1}(x)
    \|^{2}
    \, \mathrm{d}\nu(\xi)\right)
  \leq
    \|
    \tfrac{\partial^{2}e}{\partial x^{2}}\|_{A,A}
    K_{\Theta}^{2}.
\end{align*}
Combining this last inequality with inequality (\ref{t1-ineq}), then,
$$
\|\tfrac{\partial^{2}}{\partial \theta^{2}}P_{\theta}e(x)\|_{B(x),B(x)}
\leq
     \|\tfrac{\partial e}{\partial x}\|_{A}K_{\Theta^{2}} + \|\tfrac{\partial^{2}e}{\partial x^{2}}\|_{A,A}K_{\Theta}^{2}$$
To show the boundedness with respect to $\|\cdot\|_{\mathcal{E}^{2}}$, note that for any 
$e\in\mathcal{E}^{2}$,
\begin{align*}
|(P_{\theta}e)(x)| &\leq |e(x_0)| + \|\tfrac{\partial e}{\partial x}\|_{A}\int_{X}d_{A}(x_0,y)\, \mathrm{d}(\delta_{x}P_{\theta})(y) \\
                 &\leq |e(x_0)| + \|\tfrac{\partial e}{\partial x}\|_{A}[C_{\theta} + K_X d_{A}(x,x_0)] 
\end{align*}
where $C_{\theta}$ is the number $C_{\theta} = \int_{X}d_{A}(x_0,y)\,\mathrm{d}(\delta_{x_0}P_{\theta})(y)$.
This follows, since for the Lipschitz function $h(x) = d(x_0,x)$,
$|(Ph)(x)| \leq |Ph(x_0)| + |(Ph)(x_0) - (Ph)(x)| \leq C_{\theta} + K_Xd_A(x_0,x)$.
Also, for any $x\in X$,
$\frac{|e(x_0)|}{ 1 + d_{A}(x_0,x) } \leq  \frac{|e(x_0)|}{1 + d_{A}(x_0,x_0)}\leq  \|e\|_{A}$.
Therefore
$\|P_{\theta}e\|_{A} \leq \|e\|_{A} + \max\{C_{\theta},K_X\}\|\tfrac{\partial e}{\partial x}\|_{A}.$
\end{proof}
The following quadratic bound
involving the metric $d_A$ will be used as well.
\begin{prop}\label{quadratic-estimate}
Let 
$h:X\to\mathbb{R}^{n}$
be differentiable, such that
$\|\frac{\partial h}{\partial x}(x)A(x)^{-1}\| \leq B(x)$ 
where
$B : X\to\mathbb{R}$ is Lipschitz for the metric $d_A$.
Then the following inequalities hold:
\begin{enumerate}
   \renewcommand{\theenumi}{\roman{enumi}}
\item
  $\|h(x) - h(y)\|
  \leq
  B(x)d_A(x,y) + \tfrac{1}{2}\|B\|_{Lip}d_A(x,y)^{2}$\label{quadratic-pointwise}
\item For any $\mu_1,\mu_2 \in \mathcal{P}_{2,A}(X)$,
$$
\hspace{-1em}\left\|
  \int_{X}h(x)\, \mathrm{d}\mu_1(x) - \int_{X}h(y)\, \mathrm{d}\mu_2(y)
\right\|
\leq
\|B\|_{L^{2}(\mu)}d_{2,A}(\mu_1,\mu_2) + \tfrac{1}{2}\|B\|_{Lip}d_{2,A}(\mu_1,\mu_2)^{2}$$\label{quadratic-measure}
\end{enumerate}
\end{prop}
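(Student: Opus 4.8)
The plan is to establish the pointwise bound (i) by integrating along an almost-geodesic, exactly as in Proposition \ref{contraction-ptwise}, and then to transfer it to measures in (ii) by coupling together with H\"older's inequality. For (i), I would fix $x \neq y$ and $\epsilon > 0$ and choose a piecewise $C^{1}$ curve $\gamma:[0,T]\to X$ from $x$ to $y$ parameterized by arc length, so that $\|A(\gamma(t))\gamma'(t)\| = 1$ for all $t$, $T = L(\gamma)$, and $T \leq d_A(x,y) + \epsilon$. The fundamental theorem of calculus gives $h(y) - h(x) = \int_0^T \tfrac{\partial h}{\partial x}(\gamma(t))\gamma'(t)\,\mathrm{d}t$, and the hypothesis on $\tfrac{\partial h}{\partial x}A^{-1}$ yields the pointwise estimate $\|\tfrac{\partial h}{\partial x}(\gamma(t))\gamma'(t)\| = \|\tfrac{\partial h}{\partial x}(\gamma(t))A(\gamma(t))^{-1}A(\gamma(t))\gamma'(t)\| \leq B(\gamma(t))$, the last equality using the arc-length normalization.

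Next I would use the Lipschitz property of $B$. Since the restriction $\gamma|_{[0,t]}$ is a curve of length $t$ from $x$ to $\gamma(t)$, we have $d_A(x,\gamma(t)) \leq t$, hence $B(\gamma(t)) \leq B(x) + \|B\|_{Lip}\,t$. Integrating, $\|h(y) - h(x)\| \leq \int_0^T [B(x) + \|B\|_{Lip}\,t]\,\mathrm{d}t = B(x)T + \tfrac{1}{2}\|B\|_{Lip}T^{2}$, and letting $\epsilon \to 0$ so that $T \to d_A(x,y)$ gives (i).

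For (ii), I would first record that the at-most-quadratic growth of $\|h\|$ coming from (i) together with $\mu_1,\mu_2 \in \mathcal{P}_{2,A}$ makes $h$ integrable against each $\mu_i$, and that the $d_A$-Lipschitz (hence at-most-linear) growth of $B$ gives $B \in L^{2}(\mu_1)$; this renders all quantities below finite. Then for an arbitrary coupling $\gamma \in \Gamma(\mu_1,\mu_2)$ I would write $\int_X h\,\mathrm{d}\mu_1 - \int_X h\,\mathrm{d}\mu_2 = \int_{X\times X}[h(x) - h(y)]\,\mathrm{d}\gamma(x,y)$, move the norm inside, apply (i) pointwise, and bound the first term by Cauchy--Schwarz: $\int B(x)d_A(x,y)\,\mathrm{d}\gamma \leq \|B\|_{L^{2}(\mu_1)}\big(\int d_A(x,y)^{2}\,\mathrm{d}\gamma\big)^{1/2}$, recognizing $\int B(x)^{2}\,\mathrm{d}\gamma = \|B\|_{L^{2}(\mu_1)}^{2}$. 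This leaves a bound depending on the coupling only through $\int_{X\times X}d_A(x,y)^{2}\,\mathrm{d}\gamma$.

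Since that bound is increasing in $\int d_A^{2}\,\mathrm{d}\gamma$, I would conclude by taking a sequence of couplings with $\int d_A^{2}\,\mathrm{d}\gamma$ approaching $d_{2,A}(\mu_1,\mu_2)^{2}$ and passing to the limit, producing (ii) (here the $\|B\|_{L^{2}(\mu)}$ appearing in the statement should be read as $\|B\|_{L^{2}(\mu_1)}$). I expect the fundamental-theorem and H\"older steps to be routine; the one point deserving care is this final monotone-optimization, where it matters that both terms of the pointwise bound depend on the coupling only through the single quantity $\int d_A^{2}\,\mathrm{d}\gamma$, so that one minimizing sequence controls both simultaneously and justifies the single passage to the Wasserstein distance $d_{2,A}$.
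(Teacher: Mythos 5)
Your proposal is correct and follows essentially the same route as the paper's proof: for (i), integrate $\|\tfrac{\partial h}{\partial x}(\gamma(t))\gamma'(t)\|\leq B(\gamma(t))\leq B(x)+\|B\|_{Lip}\,t$ along a near-optimal arc-length curve (your direct integration of $t$ is the paper's inequality $\int_0^T d_A(\gamma(t),x)\,\mathrm{d}t\leq T^2/2$ in disguise), and for (ii), apply (i) under a near-optimal coupling together with Cauchy--Schwarz, your minimizing sequence of couplings being the same device as the paper's $\epsilon$-optimal coupling. Your added remarks on integrability and on reading $\|B\|_{L^{2}(\mu)}$ as $\|B\|_{L^{2}(\mu_1)}$ are correct and consistent with the paper's argument.
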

\begin{proof}
See appendix.
\end{proof}
With these tools in hand we can proceed to the proof of Theorem \ref{p-bds}. 
\begin{proof}[Proof of Theorem \ref{p-bds}]
In order to apply Theorem \ref{diff-thm}, we establish the requirements of Assumptions \ref{state-space} and \ref{asu:p}. Assumption \ref{state-space} requires that for any $\mu$ in $\mathcal{P}_{2,A}(X)$, the bound $\sup\limits_{\|e\|_{\mathcal{E}^{2}} \leq 1}|\mu(e)| < \infty$ holds.  Note that
$|e(x_0)| = \frac{|e(x_0)|}{1+d_A(x_0,x_0)} \leq \|e\|_{A}$. Then
\begin{align*}
|\mu(e)| & \leq \int_{X}\left[|e(x_0)| + \|\tfrac{\partial e}{\partial x}\|_{A} d_A(x_0,x)\right]\,\mathrm{d}\mu(x) 
\leq \max\left\{1, \int_{X}d_{A}(x,x_0)\,\mathrm{d}\mu(x)\right\}\|e\|_{\mathcal{E}^{2}}.
\end{align*} 
The integrability part of Assumption \ref{asu:differentiable} and the contraction part of Assumption \ref{asu:f}  allow us to apply Proposition \ref{contraction-strong}. 
Hence $P_{\theta}$ is a contraction on the space $\mathcal{P}_{2,A}(X)$ with contraction coefficient $K_{X}$, and has a unique invariant measure $\pi_{\theta}$ for each $\theta\in \Theta$. Then
 part \ref{p-is-inv} of Assumption \ref{asu:firstorder} holds.
  Proposition \ref{contract-e2-bds} affirms that $P_{\theta}e \in \mathcal{E}^{2}$ if $e\in\mathcal{E}^{2}$, and $P_{\theta}$ is bounded for the norm $\|\cdot\|_{\mathcal{E}^{2}}$. We now establish 
$\|P_{\theta}^{n} -\Pi_{\theta}\|_{\mathcal{E}^{2}} \leq \rho_{\theta} K_{X}^{n}$ for some constant $\rho_{\theta}$. We consider each of the terms in the norm $\|\cdot\|_{\mathcal{E}^{2}}$ .
First, for $e\in\mathcal{E}^{2}$,
\begin{equation}\label{e1-term}
\|P_{\theta}^{n}(e) - \Pi_{\theta}(e)\|_{A} 
\leq 
K^{n}_{X}\|\tfrac{\partial e}{\partial x}\|_{A}\max\{C_{\theta}, 1\}
\end{equation}
To see this, observe that
\begin{align*}
|P_{\theta}^{n}(e) - \Pi_{\theta}(e))(x)| &= 
  |(P_{\theta}^{n}(e)(x) - P_{\theta}^{n}(e)(x_0) + P_{\theta}^{n}(e)(x_0) - \pi_{\theta}(e)| \\
  &\leq 
  K_{X}^{n}\|\tfrac{\partial e}{\partial x}\|_{A}d_{A}(x,x_0) + 
    K_{X}^{n}\|\tfrac{\partial e}{\partial x}\|_{A}C_{\theta} \\
  &\leq 
  K_{X}^{n}\|\tfrac{\partial e}{\partial x}\|_{A}\max\{C_{\theta},1\}(1 + d_{A}(x,x_0))
\end{align*}
where $C_{\theta} = \int_{X}d_{A}(x_0,y)\,\mathrm{d}\pi_{\theta}(y)$.
Next,
\begin{equation}\label{e2-term}
\|\tfrac{\partial}{\partial x}(P_{\theta}^{n}(e) - \Pi_{\theta}(e))\|_{A} 
\leq
K_{X}^{n}\|\tfrac{\partial e}{\partial x}\|_{A}.
\end{equation}
This inequality follows from Proposition \ref{prop:interchange-e2} and Assumption \ref{asu:f}.
Finally, by recursive application of Part \ref{dx2p-bd} of Proposition \ref{contract-e2-bds},
\begin{equation}\label{e3-term}
\|\tfrac{\partial^{2}}{\partial x^{2}}(P_{\theta}^{n}(e) - \Pi_{\theta}(e))\|_{A,A}
\leq
K_{X^{2}}
K_{X}^{n-1}
\tfrac{1}{1-K_{X}}
\|\tfrac{\partial e}{\partial x}\|_{A} + K_{X}^{2n}\|\tfrac{\partial^{2}e}{\partial x^{2}}\|_{A,A}.
\end{equation}
Adding inequalities (\ref{e1-term}), (\ref{e2-term}), and (\ref{e3-term}), one obtains 
\begin{align*}
\|P_{\theta}^{n}(e) - \Pi_{\theta}(e)\|_{\mathcal{E}^{2}} &\leq  
  K_{X}^n\left( \max\{C_{\theta},1\} + 1 + K_{X^2}\tfrac{1}{K_{X}(1-K_{X})}\right)\|\tfrac{\partial e}{\partial x}\|_{A} + K_{X}^{2n}\|\tfrac{\partial^2 e}{\partial x^2}\|_{A,A} \\
&\leq 
K_{X}^n\left( \max\{C_{\theta},1\} + 1 + K_{X^2}\tfrac{1}{K_{X}(1-K_{X})}\right)\left(\|\tfrac{\partial e}{\partial x}\|_{A} + \|\tfrac{\partial^2 e}{\partial x^2}\|_{A,A}\right) \\
&\leq
 K_{X}^{n} \rho_{\theta} \|e\|_{\mathcal{E}^{2}}
\end{align*}
where $\rho_{\theta} = \max\{C_{\theta},1\} + 1 + K_{X^{2}}\frac{1}{K_{X}(1-K_X)}$. In the second inequality we have used the fact that $K_X < 1$.
 Thus item \ref{deviation-bound} of Assumption \ref{asu:firstorder} is satisfied.

  Proposition \ref{prop:interchange-e2} 
  affirms that
  $\theta \mapsto P_{\theta}e(x)$ 
  is differentiable for 
  $e\in\mathcal{E}^{2}$ and $x\in X$. Proceeding as in the proof there, we see that 
  $\|\tfrac{\partial}{\partial \theta}P_{\theta_0}e(x)\|
  \leq
  \|\frac{\partial e}{\partial x}\|_{A}K_{\Theta}\|B(x)\|$.
  Therefore
  $
  \|
  \pi_{\theta_0}\tfrac{\partial}{\partial \theta}P_{\theta_0}
  \|_{\mathcal{L}}
  \leq
  K_{\Theta}\|B\|_{L^{1}(\pi_{\theta_0})}
  $,
  which confirms 
  Part \ref{pi-d-theta-is-l-bounded-operator} of Assumption \ref{asu:p}.
  
  Part \ref{dt2p-bd} of Proposition \ref{contract-e2-bds} means that
  for any $e\in\mathcal{E}^{2}$ and $\theta\in\Theta$,
  $
  \|
  \tfrac{\partial^{2}}{\partial \theta^{2}}P_{\theta}e(x)
  \|_{B(x),B(x)}
  \leq
  k_1\|e\|_{\mathcal{E}^{2}}
  $
  where
  $k_1 
  =
  \max\{K_{\Theta}^{2},K_{\Theta^{2}}\}$.
  Using the 2nd order version of Taylor's theorem,
  this implies that for all $\Delta\theta$ sufficiently small,
  for all
  $e\in\mathcal{E}^{2}$,
  and $x\in X$, we have
  \begin{equation}\label{ptwise-der-eq}
  |
  P_{\theta_0+\Delta\theta}e(x) 
  -
  P_{\theta_0}e(x) 
  -
  \tfrac{\partial}{\partial \theta}P_{\theta_0}e(x)
  (\Delta\theta)
  | 
  \leq
  \tfrac{1}{2}k_1\|e\|_{\mathcal{E}^{2}}\|B(x)\Delta\theta\|^{2}
  \end{equation}
  Integrating inequality (\ref{ptwise-der-eq}) and dividing by 
  $\|\Delta\theta\|$ leads to
  $$
  \tfrac{1}{\|\Delta\theta\|}
  \|\pi_{\theta_0}[
  P_{\theta_0+\Delta\theta}-P_{\theta_0}
  -
  \tfrac{\partial}{\partial \theta}P_{\theta_0}(\Delta\theta)
  ]
  \|_{\mathcal{E}^{2}}
  \leq 
  \tfrac{1}{2}k_1\|B\|_{L^{2}(\pi_{\theta_0})}^{2}\|\Delta\theta\|
  $$
  and the right hand side goes to zero as $\|\Delta\theta\| \rightarrow 0$.
  Only Part \ref{dp-theta-continuous-at-pi} of Assumption \ref{asu:p} remains.
By the fundamental theorem of calculus,
$$(P_{\theta_0 + \Delta\theta} - P_{\theta_0})e(x) = \int_{0}^{1}\int_{\Xi}\tfrac{\partial e}{\partial x}(f(x,\xi,\theta+\lambda\Delta\theta))\tfrac{\partial f}{\partial \theta}(x,\xi,\theta + \lambda\Delta\theta)\Delta\theta \, \mathrm{d}\nu(\xi)\,\mathrm{d}t$$
Differentiating the above with respect to $x$ and using Part \ref{part-other-asu} of Assumption \ref{asu:f} yields
$$
\|
\tfrac{\partial}{\partial x}((P_{\theta_0 + \Delta\theta} - P_{\theta_0})e(x))A(x)^{-1}\| \leq \|e\|_{\mathcal{E}^{2}}k_2\|\Delta\theta\|\|B(x)\|$$
where $k_2 = \max\{K_{X,\Theta},K_{X}K_{\Theta}\}$.
 Applying Proposition \ref{quadratic-estimate} we have
\begin{equation*}
\begin{split}
  &\|(\pi_{\theta_0 + \Delta\theta} - \pi_{\theta})(P_{\theta_0 + \Delta\theta} - P_{\theta_0})e\|\\  &\quad\quad\leq
  k_2\|\Delta\theta\|\|e\|_{\mathcal{E}^{2}}
  \left[
    \|B\|_{L^{2}(\pi_{\theta_0})}d_{2,A}(\pi_{\theta_0+\Delta\theta},\pi_{\theta_0})
    + 
    \tfrac{1}{2}\|B\|_{Lip}d_{2,A}(\pi_{\theta_0 + \Delta\theta},\pi_{\theta_0})^{2}
  \right]
\end{split}
\end{equation*}
For the terms $d_{2,A}$, first apply the contraction property of $P$ and Proposition \ref{prop:sufficient-lipschitz-parameter}:
\begin{align*}
  d_{2,A}(\pi_{\theta+\Delta\theta},\pi_{\theta})
  &\leq
  d_{2,A}(
  \pi_{\theta+\Delta\theta} P_{\theta+\Delta\theta}, \pi_{\theta}P_{\theta+\Delta\theta}
  )
  +
  d_{2,A}(\pi_{\theta}P_{\theta+\Delta\theta},\pi_{\theta}P_{\theta}) \\
  &\leq 
K_X d_{2,A}(\pi_{\theta+\Delta\theta},\pi_{\theta}) 
+ 
K_{\Theta}\|B \Delta\theta \|_{L^{2}(\pi_{\theta})}
\end{align*}
Rearranging terms yields 
$d_{2,A}(\pi_{\theta+\Delta\theta},\pi_{\theta}) 
\leq 
\tfrac{1}{1-K_X}K_{\Theta}\|B \Delta\theta \|_{L^{2}(\pi_{\theta})}$.
Hence
\begin{align*}
&\|(\pi_{\theta_0 + \Delta\theta} - \pi_{\theta})(P_{\theta_0 + \Delta\theta} - P_{\theta_0})
\|_{\mathcal{L}} 
\\&\quad\quad\leq
k_2\|B\|^{2}_{L^{2}(\pi_{\theta_0})}\|\Delta\theta\|\left[
  \tfrac{1}{1-K_X}K_{\Theta}\|\Delta\theta\| 
  +
  \tfrac{1}{2}\|B\|_{Lip}\left(\tfrac{1}{1-K_X}K_{\Theta}\|\Delta\theta \|\right)^{2}\right],
\end{align*}
and Part \ref{dp-theta-continuous-at-pi} of Assumption \ref{asu:p} is verified.
\end{proof}
\section{Gradient estimation}
\label{sect:gradient-estimation}
The goal of this section is to prove Theorem \ref{mainthm}.
The standing assumptions  are
Assumptions 
\ref{asu:lipVA} - \ref{asu:f}.
We let 
$Z=X\times M$
and denote elements of this space by
$z = (x,m)$. 
Denote by $R_{\theta}$ the Markov kernel corresponding to the recursion (\ref{optproc1}, \ref{optproc2}).
In Proposition \ref{joint-metric} and Corollary \ref{ergodicity-sensitivity} we establish convergence of the forward sensitivity system in the sense of Proposition \ref{contraction-weak}. 
It involves finding an appropriate Lyapunov function $V$ and metric $d_{H}$ on $X\times M$.
In Proposition \ref{partial-e-mult-m-lipschitz} we show that 
$(x,m) \mapsto \frac{\partial e}{\partial x}(x)m$
is an integrable function for $\gamma_{\theta}$, thereby establishing that the right  hand side of (\ref{der-comp-id}) is finite.
 Finally, we want to show that the functional $l$ defined by 
\begin{equation}\label{def:l-stationary}
l(e) = 
\int_{X\times M}
\tfrac{\partial e}{\partial x}(x)m
\, \mathrm{d}\gamma_{\theta}(x,m)
\end{equation}
is bounded for the norm $\|\cdot\|_{\mathcal{L}}$ and satisfies the derivative equation of Theorem \ref{diff-thm}.

Define $g$ and $T$ to be the functions
\begin{align}\label{auxilliary-component}
g( (x,m),\xi,\theta) = 
  \tfrac{\partial f}
       {\partial x}(x,\xi,\theta)m + 
       \tfrac{\partial f}
  {\partial \theta}(x,\xi,\theta),
\end{align}
$$
T((x,m),\xi,\theta)
= 
\left(
  f(x,\xi,\theta), 
  g(x,m,\xi,\theta)
\right).
$$
As $\theta$ is fixed in this section, we simplify notation and denote the values of $g$ by $g(z,\xi)$.
\iffalse It is easy to see that $\frac{\partial g}{\partial m}(z,\xi) = \frac{\partial f}{\partial x}(x,\xi,\theta)$, and we already know that $f$ is contracting in the metric $d_{A}$.
Clearly this should influence us in choosing the 
metric on $X\times M$.
We will also require a Lyapunov function for the interconnection
of the $x$ and $m$ systems.\fi We use $u_x, u_{\theta},u_{m}$
to denote vectors in 
$\mathbb{R}^{n_{X}},\mathbb{R}^{n_{\Theta}},$ and 
$L(\mathbb{R}^{n_{\Theta}},\mathbb{R}^{n_{X}}),$ respectively.
\begin{prop}\label{joint-metric}
  Define $h:Z\to \mathbb{R}_{\geq 0}$ as
  $h(z) = \eta_1\|A(x)m\| + \eta_2\|B(x)\| + \eta_3d_{A}(x_0,x).$
  Then there are $\eta_1,\eta_2,\eta_3,\eta_4,\eta_5$ 
  so that 
  $\{ (\Xi,\Sigma,\nu), T, (\|\cdot\|_{Z},H)\}$
  satisfies a $1$-contraction inequality where
  $$H(z)(u_x,u_m) = \Big((1+\eta_4 h(z))A(x)u_x, A(x)u_m\Big),$$
  $$\|(u_x,u_m)\|_{Z} = \|u_x\| + \eta_5\|u_m\|.$$
\iffalse
  In the case of $B = I$
  we can set 
  $\eta_3=0$ and take 
  $h(z) = \eta_1\|A(x)m\| + \eta_2.$  
\fi
\end{prop}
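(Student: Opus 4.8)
The plan is to recognize the recursion (\ref{optproc1}, \ref{optproc2}) as a hierarchical interconnection and to apply Proposition \ref{hierarchy-lyapunov}. The map $f$ does not depend on $m$, so it is the driving system and $g$ the driven one; I take $F(z)=G(z)=A(x)$, where $A(x)$ acts on $M$ by left composition $u_m\mapsto A(x)u_m$. Assumption \ref{asu:two-contractions} is then met with $\alpha_1=\alpha_2=K_X$: the contraction of $f$ in the $x$-direction is exactly $L_X\le K_X$, and since $g$ is affine in $m$ with $\frac{\partial g}{\partial m}=\frac{\partial f}{\partial x}$, its contraction coefficient in the $m$-direction is again governed by $L_X\le K_X<1$. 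I will work with $q=1$ and $p=2$, so that $p\ge 2q$ and the integrals required are precisely the $L^2$ quantities supplied by Assumptions \ref{asu:differentiable} and \ref{asu:f}. The candidate Lyapunov function for the interconnection is the stated $h$, and the remaining work is to verify conditions (\ref{lyap-interconn-parti}) and (\ref{lyap-interconn-partii}) of Proposition \ref{hierarchy-lyapunov}.

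For condition (\ref{lyap-interconn-parti}) I would expand $\frac{\partial g}{\partial x}(z,\xi)u_x=\big(\frac{\partial^2 f}{\partial x^2}(x,\xi,\theta)u_x\big)m+\frac{\partial^2 f}{\partial x\partial\theta}(x,\xi,\theta)u_x$. Pre-composing with $A(f(x,\xi,\theta))$ and post-composing with $A(x)^{-1}$, then inserting $A(x)^{-1}A(x)$ in front of $m$ in the first piece and $B(x)^{-1}B(x)$ in the $\theta$-slot of the second, the two pieces are dominated by $\|A(f)\frac{\partial^2 f}{\partial x^2}(A(x)^{-1}\oplus A(x)^{-1})\|\,\|A(x)m\|$ and $\|A(f)\frac{\partial^2 f}{\partial x\partial\theta}(A(x)^{-1}\oplus B(x)^{-1})\|\,\|B(x)\|$. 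Integrating with $q=1$ and invoking Assumption \ref{asu:f} gives $K_{X^2}\|A(x)m\|+K_{X,\Theta}\|B(x)\|$, so condition (\ref{lyap-interconn-parti}) holds once $\eta_1\ge K_{X^2}$ and $\eta_2\ge K_{X,\Theta}$. (The term $\|B(x)\|$ genuinely appears and cannot be absorbed into a constant, which is why $h$ must carry a $\|B(x)\|$ summand.)

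For condition (\ref{lyap-interconn-partii}) I would bound $\|h(T(z,\cdot))\|_{L^2(\nu)}$ mode by mode via Minkowski. The $\|A(x)m\|$ mode gives $\|A(f)g\|_{L^2(\nu)}\le K_X\|A(x)m\|+K_\Theta\|B(x)\|$, splitting $g$ into its $\frac{\partial f}{\partial x}m$ and $\frac{\partial f}{\partial\theta}$ parts and inserting $A(x)^{-1}$ and $B(x)^{-1}$. The $\|B(x)\|$ mode gives $\|B(f)\|_{L^2(\nu)}\le\|B(x)\|+\ell\,\|d_A(x,f)\|_{L^2(\nu)}$, where $\ell$ is the $d_A$-Lipschitz constant of $x\mapsto\|B(x)\|$ from Assumption \ref{asu:lipVB}. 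The $d_A(x_0,x)$ mode gives $\|d_A(x_0,f)\|_{L^2(\nu)}\le C+K_X d_A(x_0,x)$, using the pointwise contraction of Proposition \ref{contraction-ptwise} and the base displacement $C=\|d_A(x_0,f(x_0,\cdot))\|_{L^2(\nu)}$, finite by Assumption \ref{asu:differentiable}; the triangle inequality then yields $\|d_A(x,f)\|_{L^2(\nu)}\le C+(1+K_X)d_A(x_0,x)$.

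The main obstacle is closing this drift, because the $\|A(x)m\|$ mode emits, through $\frac{\partial f}{\partial\theta}$, a source $\eta_1 K_\Theta\|B(x)\|$ into the $\|B(x)\|$ mode, and $\|B(x)\|$ carries no contraction of its own: collecting terms, the coefficient of $\|B(x)\|$ after one step is $\eta_1K_\Theta+\eta_2>\eta_2$. The resolution is that $\|B(x)\|$ is slaved to the contracting $d_A(x_0,x)$ mode. By Assumption \ref{asu:lipVB}, $\|B(x)\|\le\|B(x_0)\|+\ell\,d_A(x_0,x)$, so the excess $\eta_1K_\Theta\|B(x)\|$ can be rewritten as a constant plus a multiple of $d_A(x_0,x)$; since the $d_A(x_0,x)$ coefficient contracts by $K_X<1$, after fixing $\eta_1=K_{X^2}$ and $\eta_2=K_{X,\Theta}$ one chooses $\eta_3$ large enough that the surviving coefficient of $d_A(x_0,x)$ is at most $\eta_3$, all remaining constants being absorbed into the additive $K$. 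With conditions (\ref{lyap-interconn-parti}) and (\ref{lyap-interconn-partii}) verified, Proposition \ref{hierarchy-lyapunov} furnishes a $1$-contraction inequality with exactly the stated $H$ and $\|\cdot\|_Z$, the parameters $\eta_4,\eta_5$ of the statement being the output parameters produced by that proposition.
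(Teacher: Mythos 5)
Your proposal is correct and follows essentially the same route as the paper: it applies Proposition \ref{hierarchy-lyapunov} with $F=G=A$, $\alpha_1=\alpha_2=K_X$, $q=1$, $p=2$, verifies condition (\ref{lyap-interconn-parti}) via the bound $K_{X^2}\|A(x)m\|+K_{X,\Theta}\|B(x)\|\le h(z)$, and then closes the $L^{2}$ drift inequality for $h$ mode by mode. The only difference is bookkeeping in the $\|B\|$ mode: the paper bounds $\|B(f(x,\xi,\theta))\|$ against $\|B(f(x_0,\xi,\theta))\|$, so that $\|B(x)\|$ never re-emits into its own mode, whereas you bound it against $\|B(x)\|$ and then slave the resulting excess $\eta_1 K_\Theta\|B(x)\|$ back to the contracting $d_A(x_0,x)$ mode via the Lipschitz property of Assumption \ref{asu:lipVB} --- both versions close the inequality with an admissible choice of $\eta_1,\eta_2,\eta_3$.
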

\begin{proof}
  We will apply Proposition \ref{hierarchy-lyapunov} to the map 
  $ T(z,\xi) = (f(x,\xi,\theta),g(x,m,\xi))$,
  to find contraction in the metric $d_{H}$.  The norm $\|\cdot\|_{M}$ is the usual norm on $M$ induced by $\|\cdot\|_{X}$ and $\|\cdot\|_{\Theta}$. 
For Part \ref{comp-cont-cond} of Assumption \ref{asu:two-contractions}, we have 
\begin{align*}
&\sup_{\|u_{m}\|=1}
\int_{\Xi}
\|
A(f(x,\xi,\theta))\tfrac{\partial g}{\partial m}(z,\xi)A(x)^{-1}u_{m}
\|
\, \mathrm{d}\nu(\xi) \\
&\quad\quad=
\sup_{\|u_{m}\|=1}
\int_{\Xi}\sup_{\|u_x\|=1}
\|
A(f(x,\xi,\theta))\tfrac{\partial f}{\partial x}(x,\xi,\theta)A(x)^{-1}u_{m}u_x
\|
\, \mathrm{d}\nu(\xi) 
\leq K_{X}
\end{align*}
and, directly by assumption,
\begin{align*}
&\sup_{\|u_x\|=1}
\left(\int_{\Xi}
\|
A(f(x,\xi,\theta))\tfrac{\partial f}{\partial x}(x,\xi,\theta)A(x)^{-1}u_x
\|^{2}
\, \mathrm{d}\nu(\xi)\right)^{1/2} \leq K_{X}.
\end{align*}

We now establish Part \ref{lyap-interconn-parti} of Proposition \ref{hierarchy-lyapunov}. The function 
  $\frac{\partial g}{\partial x}(z,\xi)$
  is a linear map from 
  $\mathbb{R}^{n_{X}}$
  to
  $L(\mathbb{R}^{n_{\Theta}},\mathbb{R}^{n_{X}})$,
  and we identify this with a bilinear map from
  $\mathbb{R}^{n_{X}} \times \mathbb{R}^{n_{\Theta}}$
  to
  $\mathbb{R}^{n_{X}}$.
  Specifically,
  $$
  \tfrac{\partial g}{\partial x}(z,\xi)[u_x,u_{\theta}] =
  \tfrac{\partial^{2} f}{\partial x^{2}}(x,\xi,\theta)[u_x,m\, u_{\theta}] +
  \tfrac{\partial^{2}f}{\partial x\partial \theta}(x,\xi,\theta)[u_x,u_{\theta}]
  $$
  and
  $A(f(x,\xi,\theta))\frac{\partial g}{\partial x}(z,\xi)A(x)^{-1}$
  is the linear map from 
  $\mathbb{R}^{n_{X}}$
  to
  $L(\mathbb{R}^{n_{\Theta}},\mathbb{R}^{n_{X}})$ where
  \begin{align*}
    &
    A(f(x,\xi,\theta))
    \tfrac{\partial g}{\partial x}(z,\xi)
    A(x)^{-1}
    [u_x,u_{\theta}] 
    =\\
    &
    A(f(x,\xi,\theta))
    \tfrac{\partial^{2} f}{\partial x^{2}}(x,\xi,\theta)
    [A(x)^{-1}u_x,m\,u_{\theta}]
    +
    A(f(x,\xi,\theta))
    \tfrac{\partial^{2}f}{\partial x\partial \theta}(x,\xi,\theta)
    [A(x)^{-1}u_x,u_{\theta}]
  \end{align*}

  For the first term we have, using the assumption on 
  $\tfrac{\partial^{2}f}{\partial x^{2}}$ from Assumption \ref{asu:f}
  and the identity
  $m\,u_{\theta} = A(x)^{-1}A(x)m\,u_\theta$,
  $$
  \sup_{\|u_x\|=1}
  \int_{\Xi}\sup_{\|u_{\theta}\|=1}
  \|
  A(f(x,\xi,\theta)
  \tfrac{\partial^{2} f}{\partial x^{2}}(x,\xi,\theta)
  [A(x)^{-1}u_x, m\,u_{\theta}]
  \|
  \, \mathrm{d}\nu(\xi) \leq K_{X^{2}}\|A(x)m\|
  $$
  For the second, use the identity 
  $u_{\theta}= B(x)^{-1}B(x)u_{\theta}$
  and our assumption on 
  $\tfrac{\partial^{2} f}{\partial x\partial \theta}$,
  $$
  \sup_{\|u_x\|=1}
  \int_{\Xi}
  \sup_{\|u_{\theta}\|=1}
  \|
  A(f(x,\xi,\theta))
  \tfrac{\partial^{2} f}{\partial x \partial \theta}(x,\xi,\theta)
  [A(x)^{-1}u_x,u_{\theta}]
  \|
  \, \mathrm{d}\nu(\xi)
  \leq 
  K_{X,\Theta}\|B(x)\|
  $$
  Combining these two inequalities, while assuming $K_{X^{2}} \leq \eta_1$ and $K_{X,\Theta} \leq \eta_2$,
  \begin{align*}
    \sup_{\|u_{x}\|=1}
  \int_{\Xi}
  \|A(f(x,\xi,\theta))\tfrac{\partial g}{\partial x}(z,\xi)A(x)^{-1}u_x\|
  \, \mathrm{d}\nu(\xi) 
  &\leq 
  K_{X^{2}}\|A(x)m\| + K_{X,\Theta}\|B(x)\| \\
  &\leq h(z)
  \end{align*}
Next, we confirm Part \ref{lyap-interconn-partii} of Proposition \ref{hierarchy-lyapunov}, by showing the Lyapunov property of the function $h$. 
We consider the three terms of the function, starting with
 $\|A(x)m\|$:
\begin{align*}
  \left(
  \int_{\Xi}\|A(f(x,\xi,\theta))g(z,\xi)\|^{2}\, \mathrm{d}\nu(\xi)
  \right)^{1/2}
  &\leq
    \left(
    \int_{\Xi}
    \|
    A(f(x,\xi,\theta))
    \tfrac{\partial f}{\partial x}(x,\xi,\theta)m
    \|^{2}
    \, \mathrm{d}\nu(\xi)
    \right)^{1/2} \\
  &+
    \left(
    \int_{\Xi}
    \|A(f(x,\xi,\theta))\tfrac{\partial f}{\partial \theta}(x,\xi,\theta)\|^{2}
    \, \mathrm{d}\nu(\xi)\right)^{1/2} \\
  &\leq 
  K_{X} \|A(x)m\| + K_{\Theta}\|B(x)\|
\end{align*}
Next is $\|B(x)\|$. 
Fix a basepoint $x_0$ and set 
$
B_0
=
\left(\int_{\Xi}\|B(f(x_0,\xi,\theta))\|^{2}\, \mathrm{d}\nu(\xi)\right)^{1/2}
$. Then
\begin{align*}
\left(\int_{\Xi}\|B(f(x,\xi,\theta))\|^{2}\, \mathrm{d}\nu(\xi)\right)^{1/2} 
&\leq
B_0
+
\|B\|_{Lip}
  \left(\int_{\Xi}
    d_A(f(x_0,\xi,\theta),f(x,\xi,\theta))^{2}
    \, \mathrm{d}\nu(\xi)\right)^{1/2} \\
&\leq B_0 + \|B\|_{Lip}\,K_{X} d_A(x_0,x)
\end{align*}
The first inequality uses Assumption \ref{asu:lipVB} and the second uses the pointwise contraction property of $f$
\iffalse \begin{align*}
  \left(\int_{\Xi}
  d_A\left(f(x_0,\xi,\theta),f(x,\xi,\theta)\right)^{2}
    \, \mathrm{d}\nu(\xi)\right)^{1/2} \leq K_{X} d_A(x_0,x)
 \end{align*}\fi
which comes from Proposition \ref{contraction-ptwise}.
For the term 
$d_A(x_0,x)$
we have, setting 
$D_0 
=
\left(\int_{\Xi}d_A(x_0,f(x_0,\xi,\theta))^{2}\, \mathrm{d}\nu(\xi)\right)^{1/2}$,
\begin{align*}
\left(
\int_{\Xi}d_A(x_0,f(x,\xi,\theta))^{2}\, \mathrm{d}\nu(\xi)
\right)^{1/2} &\leq 
D_0 + 
\left(
\int_{\Xi}d_A(f(x_0,\xi,\theta),f(x,\xi,\theta))^{2}\, \mathrm{d}\nu(\xi)
\right)^{1/2} \\
&\leq D_0 + K_{X} d_A(x_0,x)
\end{align*}
Combining these we get
\begin{align*}
  &\left(\int_{\Xi}h(T(z,\xi))^{2}\, \mathrm{d}\nu(\xi)\right)^{1/2} \\
  &\quad\quad \leq 
  \eta_1K_{X} \|A(x)m\| +
  \eta_1 K_{\theta}\|B(x)\| + 
  (\eta_2\|B\|_{Lip}K_{X}  + 
  \eta_3 K_{X} )d_A(x_0,x) + K_4
\end{align*}
where 
$K_4 = 
\eta_2B_0
+
\eta_3D_0$.
Based on this inequality, it is evident that $\eta_1,\eta_2,\eta_3$ can be chosen so that the Lyapunov condition on $h$ is satisfied.
Specifically, take
$
K_{X^{2}} \leq \eta_1$, 
$\max\{K_{X,\Theta},\eta_1K_{\Theta}\} < \eta_2$, and 
$\eta_2\|B\|_{Lip}K_{X} < \eta_3(1-K_{X})$.
\end{proof}
We can use $h$ to get a Lyapunov function, yielding ergodicity of the sensitivity process:
\begin{cor}\label{ergodicity-sensitivity}
  Let the $\eta_1,\eta_2,\eta_3$ of Proposition \ref{joint-metric} be chosen so that they are all positive.
  Let $V$ be the function
  $V(z) = \eta_1\|A(x)m\| + \eta_2 \|B(x)\| + \eta_3d_A(x_0,x) + 1.$
  Then
  the kernel $R_{\theta}$ has a unique invariant measure
  $\gamma_{\theta} \in \mathcal{P}_{1,V}(Z)$, and for $\mu \in \mathcal{P}_{1,V}(Z)$,
  $\sup_{\|g\|_{Lip(H)} + \|g\|_{V}\leq 1}|\mu R_{\theta}^{n}(g) - \gamma_{\theta}(g)| \rightarrow 0$ as $n\rightarrow \infty$.
\end{cor}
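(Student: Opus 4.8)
The plan is to apply Proposition \ref{contraction-weak} with $p=1$ to the kernel $R_{\theta}$, taking for the metric the distance $d_H$ on $Z$ determined by the pair $(\|\cdot\|_{Z},H)$ of Proposition \ref{joint-metric} and for the Lyapunov function the function $V$. Proposition \ref{joint-metric} already supplies the first of the two hypotheses of Proposition \ref{contraction-weak}: it establishes a pointwise $1$-contraction inequality for the system $\{(\Xi,\Sigma,\nu),T,(\|\cdot\|_{Z},H)\}$, that is, the conditions of Proposition \ref{contraction-ptwise} hold with some $\alpha<1$ in the $d_H$ metric. Hence it remains only to verify that $V=h+1$ is a $1$-Lyapunov function for $R_{\theta}$, after which existence and uniqueness of $\gamma_{\theta}\in\mathcal{P}_{1,V}(Z)$ together with the stated convergence of $\mu R_{\theta}^{n}(g)$ follow at once.

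For the Lyapunov drift I would reuse the estimate already carried out inside the proof of Proposition \ref{joint-metric}, where it was shown that
$$
\left(\int_{\Xi}h(T(z,\xi))^{2}\,\mathrm{d}\nu(\xi)\right)^{1/2}
\leq
\eta_1K_X\|A(x)m\| + \eta_1K_{\Theta}\|B(x)\| + (\eta_2\|B\|_{Lip}K_X + \eta_3K_X)\,d_A(x_0,x) + K_4,
$$
with $K_4=\eta_2B_0+\eta_3D_0$. Since an $L^{1}(\nu)$-norm is dominated by the corresponding $L^{2}(\nu)$-norm, the same right-hand side bounds $(R_{\theta}h)(z)=\int_{\Xi}h(T(z,\xi))\,\mathrm{d}\nu(\xi)$. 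Comparing coefficients against $h(z)=\eta_1\|A(x)m\|+\eta_2\|B(x)\|+\eta_3 d_A(x_0,x)$ gives $(R_{\theta}h)(z)\leq \beta\,h(z)+K_4$ with $\beta=\max\{K_X,\ \eta_1K_{\Theta}/\eta_2,\ K_X+\eta_2\|B\|_{Lip}K_X/\eta_3\}$. The three inequalities that defined the choice of $\eta_1,\eta_2,\eta_3$ in Proposition \ref{joint-metric}, namely $K_X<1$ (Assumption \ref{asu:f}), $\eta_1K_{\Theta}<\eta_2$, and $\eta_2\|B\|_{Lip}K_X<\eta_3(1-K_X)$, are exactly what force each of these three terms strictly below $1$, so $\beta<1$. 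Adding back the constant yields $(R_{\theta}V)(z)\leq\beta V(z)+K$ with $K=K_4+1-\beta\geq 0$, and $V\geq 1$ because $h\geq 0$.

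It then remains to check that $V$ has compact sublevel sets, and here the positivity of all three $\eta_i$ enters. Because $\sup_{x}\|A(x)^{-1}\|_X=C<\infty$ by Assumption \ref{asu:lipVA}, one has $\|u\|\leq C\|A(x)u\|$ for every $x$ and $u$; integrating this bound along any curve gives $d_A(x_0,x)\geq C^{-1}\|x-x_0\|$. Consequently $V(z)\leq c$ forces $\|x-x_0\|\leq C(c-1)/\eta_3$ through the $\eta_3 d_A(x_0,x)$ term and $\|m\|\leq C\|A(x)m\|\leq C(c-1)/\eta_1$ through the $\eta_1\|A(x)m\|$ term, so $(x,m)$ is confined to a Euclidean-bounded set. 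Since $X$ is closed, $d_A$ induces the Euclidean topology, and $V$ is continuous (the three terms of $h$ are continuous in $(x,m)$, the middle one by the $d_A$-Lipschitz property of $x\mapsto\|B(x)\|$ in Assumption \ref{asu:lipVB}), the sublevel set is a closed, bounded subset of $\mathbb{R}^{n_X}\times M$ and hence compact. With the drift inequality and compact sublevel sets in hand, $V$ is a $1$-Lyapunov function and Proposition \ref{contraction-weak} delivers the corollary.

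I expect the drift step to be essentially bookkeeping, since the hard estimate is already contained in Proposition \ref{joint-metric}; the one genuinely new ingredient, and the step most likely to require care, is the compactness of the sublevel sets, which hinges on converting $d_A$-boundedness into Euclidean boundedness through $\sup_{x}\|A(x)^{-1}\|_X<\infty$ and on the positivity of $\eta_1$ and $\eta_3$ to control $m$ and $x$ separately.
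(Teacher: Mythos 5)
Your proposal is correct and follows essentially the same route as the paper: apply Proposition \ref{contraction-weak} with the metric $d_H$ from Proposition \ref{joint-metric}, reuse the drift estimate already derived in that proposition's proof to get the $1$-Lyapunov property of $V$, and verify compact sublevel sets via $\sup_x\|A(x)^{-1}\|<\infty$. Your write-up is in fact slightly more careful than the paper's on two points it leaves implicit --- the explicit coefficient comparison showing $\beta<1$ from the constraints on $\eta_1,\eta_2,\eta_3$, and the closedness (via continuity of $V$) needed to conclude compactness of the sublevel sets.
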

\begin{proof}
We apply Proposition \ref{contraction-weak}, using the metric $d_{H}$ defined in Proposition \ref{joint-metric}. Proposition \ref{joint-metric} established the pointwise contraction inequality needed for Proposition \ref{contraction-weak}. For some $\beta \in [0,1)$,  the inequality 
$\int_{\Xi}V(T(z,\xi,\theta))\, \mathrm{d}\nu(\xi) \leq \beta V(z) + (K_{4} + 1)$
 holds at $z \in Z$, as we have already shown in the proof of Proposition \ref{joint-metric}. It remains to show that $V$ has compact sublevel sets.
  Note that if $V(x,m) \leq r$ then  $d_A(x_0,x) \leq \tfrac{r}{\eta_3}$ and $\|m\| \leq \tfrac{r K}{\eta_1}$, where $K$ is such that 
  $\sup_{x\in X}\|A(x)^{-1}\| \leq K$. Thus
  $V^{-1}[0,r]$ is contained in the compact set
  $\{ (x,m) \in Z \mid d_{A}(x_0,x) \leq \frac{r}{\eta_3} \text{ and } \|m\| \leq \frac{rK}{\eta_1} \}$.
\end{proof}
To ensure that the function $(x,m) \mapsto \frac{\partial e}{\partial x}(x)m$ is integrable for the measure $\gamma_{\theta}$ it suffices that it is Lipschitz for the metric $d_H$, and bounded for Lyapunov function $V$:
\begin{prop}\label{partial-e-mult-m-lipschitz}
  For any 
  $e \in \mathcal{E}^{2}$
  the map
  $(x,m) \mapsto \frac{\partial e}{\partial x}(x)m$
  is a Lipschitz function 
  in the metric $d_H$ of Proposition \ref{joint-metric}, and
  is also bounded for the norm $\|\cdot\|_{V}$.
\end{prop}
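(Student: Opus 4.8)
The plan is to treat $\phi(x,m) := \tfrac{\partial e}{\partial x}(x)m$ as a map from $Z$ into the normed space $L(\mathbb{R}^{n_{\Theta}},\mathbb{R})$, equipped with the dual norm $\|\cdot\|_{\Theta}$, and to handle the two claims separately. The boundedness is immediate, while the Lipschitz bound reduces to a single pointwise estimate comparing the derivative of $\phi$ against the Finsler norm induced by $H$. Throughout I use that $\eta_1,\eta_4,\eta_5$ are strictly positive (by Corollary \ref{ergodicity-sensitivity} and the construction in Proposition \ref{joint-metric}).

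For boundedness I would insert $A(x)^{-1}A(x)$ and invoke the definition of $\|\cdot\|_A$, giving
\[
\|\phi(x,m)\|_{\Theta}
= \|\tfrac{\partial e}{\partial x}(x)A(x)^{-1}A(x)m\|_{\Theta}
\leq \|\tfrac{\partial e}{\partial x}\|_A\,\|A(x)m\|.
\]
Since $V(z) \geq \eta_1\|A(x)m\|$ with $\eta_1>0$, this yields $\|\phi\|_V \leq \tfrac{1}{\eta_1}\|\tfrac{\partial e}{\partial x}\|_A < \infty$, the finiteness coming from $e\in\mathcal{E}^{2}$.

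For the Lipschitz property, the idea is to first establish a pointwise bound $\|D\phi(z)(u_x,u_m)\|_{\Theta} \leq K\,\|H(z)(u_x,u_m)\|_Z$ for all tangent vectors $(u_x,u_m)$, and then integrate along piecewise $C^{1}$ curves exactly as in the proof of Proposition \ref{contraction-ptwise}, taking the infimum over $\gamma\in[z_1\leadsto z_2]$ to conclude $\|\phi(z_1)-\phi(z_2)\|_{\Theta} \leq K\,d_H(z_1,z_2)$. Evaluated on $u_{\theta}$, the derivative is
\[
D\phi(z)(u_x,u_m)[u_{\theta}]
= \tfrac{\partial^{2} e}{\partial x^{2}}(x)[u_x,\,m\,u_{\theta}]
+ \tfrac{\partial e}{\partial x}(x)(u_m u_{\theta}),
\]
and I would control each summand by one of the two components of $\|H(z)(u_x,u_m)\|_Z = (1+\eta_4 h(z))\|A(x)u_x\| + \eta_5\|A(x)u_m\|$. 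Inserting factors $A(x)^{-1}A(x)$ in the first term gives $|\tfrac{\partial^{2} e}{\partial x^{2}}(x)[u_x,m\,u_{\theta}]| \leq \|\tfrac{\partial^{2}e}{\partial x^{2}}\|_{A,A}\,\|A(x)u_x\|\,\|A(x)m\|\,\|u_{\theta}\|_{\Theta}$, and using $\|A(x)m\|\leq \tfrac{1}{\eta_1}h(z)\leq \tfrac{1}{\eta_1\eta_4}(1+\eta_4 h(z))$ bounds it by the first component; the second term satisfies $|\tfrac{\partial e}{\partial x}(x)(u_m u_{\theta})| \leq \|\tfrac{\partial e}{\partial x}\|_A\,\|A(x)u_m\|\,\|u_{\theta}\|_{\Theta}$, which is $\tfrac{1}{\eta_5}$ times the second component. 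Taking the supremum over $\|u_{\theta}\|_{\Theta}=1$ and setting $K = \max\{\tfrac{1}{\eta_1\eta_4}\|\tfrac{\partial^{2}e}{\partial x^{2}}\|_{A,A},\,\tfrac{1}{\eta_5}\|\tfrac{\partial e}{\partial x}\|_A\}$ yields the pointwise inequality, finite because $e\in\mathcal{E}^{2}$.

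The only real subtlety is the first derivative term, which grows linearly in $\|m\|$ and could not be controlled by an unweighted metric; it is precisely the $h$-dependent weight $(1+\eta_4 h(z))$ in $H$, acting through the summand $\eta_1\|A(x)m\|$ of $h$, that absorbs this growth. This is exactly the mechanism anticipated in the discussion following Proposition \ref{contraction-strong}. Everything else is the routine insertion of $A(x)^{-1}A(x)$ together with the curve-integration argument already used in the paper.
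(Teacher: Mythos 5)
Your proposal is correct and follows essentially the same route as the paper's proof: the identical splitting of $D\phi(z)$ into the $\frac{\partial^{2}e}{\partial x^{2}}[u_x,m\,u_\theta]$ and $\frac{\partial e}{\partial x}(u_m u_\theta)$ terms, the same key estimate $\|A(x)m\|/(1+\eta_4 h(z))\leq \tfrac{1}{\eta_1\eta_4}$, and the same final constant $\max\{\tfrac{1}{\eta_1\eta_4}\|\tfrac{\partial^2 e}{\partial x^2}\|_{A,A},\,\tfrac{1}{\eta_5}\|\tfrac{\partial e}{\partial x}\|_{A}\}$. The only cosmetic difference is that you spell out the curve-integration step linking the pointwise derivative bound to the Lipschitz property, which the paper simply invokes as the equivalence $\|g\|_{Lip}<\infty \iff \|\tfrac{\partial g}{\partial z}\|_{H}<\infty$.
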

\begin{proof}
Let the $\eta_i$ be as in Proposition \ref{joint-metric}.
Let $g(x,m) = \frac{\partial e}{\partial x}(x)m$.
We have 
$$\|g(x,m)\| \leq \|\tfrac{\partial e}{\partial x}\|_{A}\|A(x)m\| \leq \|e\|_{\mathcal{E}^{2}}\|A(x)m\| \leq \tfrac{1}{\eta_1}\|e\|_{\mathcal{E}^{2}}V(x,m)$$ hence $\|g\|_{V} \leq \tfrac{1}{\eta_1}\|e\|_{\mathcal{E}^{2}}$.
Next, we show that $\|g\|_{Lip} < \infty$ for the metric $d_{H}$. This is equivalent to showing $\|\frac{\partial g}{\partial x}\|_{H} < \infty$.
Let 
$(u_x,u_m)$
be a vector in 
$\mathbb{R}^{n_{X}} \times L(\mathbb{R}^{n_{\Theta}},\mathbb{R}^{n_X})$.
Then $H(z)^{-1}(u_x,u_m)$ is
$
H(z)^{-1}(u_x,u_m)
=
\left(\tfrac{1}{1+\eta_4h(z)}A^{-1}(x)u_x, A(x)^{-1}u_m\right)
$
and 
$\frac{\partial g}{\partial z}(z)$
is the linear map from 
$\mathbb{R}^{n_{X}} \times L(\mathbb{R}^{n_{\Theta}},\mathbb{R}^{n_{X}})$
to
$L(\mathbb{R}^{n_{\Theta}},\mathbb{R})$
where
\[
\tfrac{\partial g}{\partial z}(z)[u_x,u_m][u_{\theta}] = 
\tfrac{\partial^{2} e}{\partial x^{2}}(x)[u_x,m u_{\theta}] + 
\tfrac{\partial e}{\partial x}(x)[u_mu_{\theta}]
\]
Fix $(u_x,u_m)$ with $\|u_x\| + \eta_5\|u_m\| =1$. Then
\begin{align*}
  &\left\|
    \tfrac{\partial g}{\partial z}(z)H(z)^{-1}(u_x,u_m)
  \right\| 
  \\&\quad\quad=
  \sup_{\|u_{\theta}\|=1}
  \left|
  \frac{
    \tfrac{\partial^{2} e}{\partial x^{2}}(x)[A^{-1}(x)u_x,m u_{\theta}]
  }{
    1+\eta_4h(z)
  } 
  + 
  \tfrac{\partial e}{\partial x}(x)A^{-1}(x) u_m u_{\theta}
  \right|
  \\&\quad\quad\leq
  \sup_{\|u_{\theta}\|=1}
  \frac{
    \|\tfrac{\partial^{2}e}{\partial x^{2}}\|_{A,A}\|u_x\|\|A(x)m\|\|u_{\theta}\|
  }{
    1+\eta_4h(z)
  }
  +
  \|\tfrac{\partial e}{\partial x}\|_{A}\|u_m\|\|u_{\theta}\| \\
  &\quad\quad\leq
  \frac{
    \|\tfrac{\partial^{2}e}{\partial x^{2}}\|_{A,A}\|u_x\|\|A(x)m\|
  }{
    1+\eta_4h(z)
  }
  + 
  \|\tfrac{\partial e}{\partial x}\|_{A}\|u_m\|
\end{align*}
To continue, note by definition of $h$ that 
$\frac{\|A(x)m\|}{1 +\eta_4h(z)} \leq \frac{1}{\eta_1\eta_4}$.
Then, 
\begin{align*}
  &\leq 
    \|\tfrac{\partial^{2}e}{\partial x^{2}}\|_{A,A}\|u_x\|
    \tfrac{1}{\eta_1\eta_4} 
    +
    \tfrac{\eta_5}{\eta_5}
    \|\tfrac{\partial e}{\partial x}\|_{A}\|u_m\| \\
  &\leq 
  \max\left\{
  \|\tfrac{\partial^{2}e}{\partial x^{2}}\|_{A,A}\tfrac{1}{\eta_1\eta_4},
    \tfrac{1}{\eta_5}\|\tfrac{\partial e}{\partial x}\|_{A}
  \right\} \\
  &\leq \|e\|_{\mathcal{E}^{2}}
  \max\left\{
  \tfrac{1}{\eta_1\eta_4},
  \tfrac{1}{\eta_5}
  \right\}
\end{align*}
Therefore a Lipschitz constant for the function $g$ is 
$\|e\|_{\mathcal{E}^{2}}
\max\left\{
\tfrac{1}{\eta_1\eta_4},
\tfrac{1}{\eta_5}
\right\}$.
\end{proof}
We now continue to the proof of Theorem \ref{mainthm}.
\begin{proof}[Proof of Theorem \ref{mainthm}]
By Corollary \ref{ergodicity-sensitivity}, the forward sensitivity process converges to a unique stationary measure $\gamma_{\theta}$ in $\mathcal{P}_{1,V}(Z)$.
Let $g$ be the function $g(x,m) = \frac{\partial e}{\partial x}(x)m$.
By Proposition \ref{partial-e-mult-m-lipschitz} we 
see that $\|g\|_{Lip} + \|g\|_{V} < \infty$, which means in particular that the integral on the right side of equation (\ref{def:l-stationary}) is well-defined.

We show that the functional $l$ of (\ref{def:l-stationary}) is bounded for the norm $\|\cdot\|_{\mathcal{L}}$. 
We have 
$
\|l(e)\|
\leq
\|e\|_{\mathcal{E}^{2}}
\int_{Z}\|A(x)m\|\, \mathrm{d}\gamma_{\theta}(z),$
with the latter integral being finite since $\gamma_{\theta} \in \mathcal{P}_{1,V}(Z)$.
Then $\|l\|_{\mathcal{L}} < \infty$. It remains to show $T(l) = l$.
  By the identity 
  $\gamma_{\theta}=\gamma_{\theta} R_{\theta}$,
  \begin{align}\label{l-id}
    &l(e) = 
    \int_{X \times M}
    \tfrac{\partial e}{\partial x}(x)m \, \mathrm{d}\gamma_{\theta}(x,m) \nonumber \\
    &\, \, \, \, \, \, \, \, \,
    =
    \int_{X\times M}
    \left(
      \int_{\Xi}
      \tfrac{\partial e}{\partial x}(f(x,\xi,\theta))
      \left(
        \tfrac{\partial f}{\partial x}(x,\xi,\theta)m +
        \tfrac{\partial f}{\partial \theta}(x,\xi,\theta)
      \right)
      \, \mathrm{d}\nu(\xi)
    \right)d\gamma_{\theta}(x,m)
  \end{align}  
  Recall the definition of $T$ is 
  $
  T(l)e
  =
  lP_{\theta}e 
  +
  \pi_{\theta}\tfrac{\partial}{\partial \theta}P_{\theta}e
  $. 
  With our definition of $l$, and applying
  Proposition \ref{prop:interchange-e2}, these two terms are
\begin{align}\label{lp-expansion}
  lP_{\theta}(e)&=
\int_{X\times M}
\tfrac{\partial}{\partial x}(P_{\theta}e)(x)m 
\, \mathrm{d}\gamma_{\theta}(x,m) \nonumber\\
&= 
\int_{X\times M}
\left(
  \int_{\Xi}
  \tfrac{\partial e}{\partial x}(f(x,\xi,\theta))
  \tfrac{\partial f}{\partial x}(x,\xi,\theta)\, \mathrm{d}\nu(\xi)
\right)m
\, \mathrm{d}\gamma_{\theta}(x,m),
\end{align}
and
\begin{align}\label{pdp-expansion}
  &\pi_{\theta}\tfrac{\partial}{\partial \theta}P_{\theta}e
  =
  \int_{X}
  \left(
    \int_{\Xi}
    \tfrac{\partial e}{\partial x}(f(x,\xi,\theta))
    \tfrac{\partial f}{\partial \theta}(x,\xi,\theta)
    \, \mathrm{d}\nu(\xi)
  \right)\, \mathrm{d}\pi_{\theta}(x).
\end{align}
Add equation (\ref{lp-expansion}) to (\ref{pdp-expansion}) and compare with  (\ref{l-id}) to see $T(l) = l$.
\end{proof}
To finish this section, let us discuss how this estimator can be implemented. One option is to iterate the joint recursion (\ref{optproc1}, \ref{optproc2}) for a large number of steps, to obtain a sample $(x_{n}, m_{n})$, and then prepare the estimate by forming the product $\Delta_{n} = \frac{\partial e}{\partial x}(x_n)m_{n}$. This requires the ability to compute the derivatives of $e$ and $f$. According to Theorem \ref{mainthm}, the estimate $\Delta_{n}$ has the property that $\mathbb{E}[\Delta_{n}] \rightarrow \frac{\partial }{\partial \theta}\int_{X}e(x)\, \mathrm{d}\pi_{\theta}(x)$ as $M\rightarrow \infty$. 
To control the variance of the estimate, one can form the running averages $A_n = \tfrac{1}{n}\sum\limits_{i=1}^n\Delta_i$. The results of \cite{joulin} can be used in certain cases to quantify how the variance of the $A_n$ decreases with time.
\section{Examples}\label{sect:example}
\begin{example}\label{nn-example}
We consider a stochastic neural network where at each time only a subset of the edges in the network are activated. 
There are $N$ nodes so that the state space $X$ is $[0,1]^N$. 
The random input is a binary vector in  $\Xi = \{0,1\}^{N\times N}$. 
Let 
$\sigma$
be the sigmoid function
$\sigma(x) = (1+\exp(-x))^{-1}$. 
The function 
$f: X \times \Xi \times \Theta \to X$
is
$$
f_{i}(x,\xi,\theta)
=
\sigma
\left(
u_i(x,\xi,\theta)
\right)
$$
where
$u_i(x,\xi,\theta) = \sum_{k=1}^{n}\xi_{i,k}\theta_{i,k}x_k$. 
The $b_i$ are biases and considered fixed. 
A vector $\xi \in \Xi$ indicates which edges are active at each time step;
The edge $(i,j)$ from $j$ to $i$ is only used if $\xi_{i,j}=1$.
The probability measure on $\Xi$ is defined by 
$\nu(\xi) := \prod\limits_{(i,j)\in E}\rho^{1-\xi_{i,j}}(1-\rho)^{\xi_{i,j}}$.
Under this law, 
in the extreme $\rho=1$ we have $\xi_{i,j} = 0$ for all $i,j$ with probability 1.
The parameter space $\Theta$ is the $N\times N$ matrices $\mathbb{R}^{N\times N}$,
which are the weights $\theta_{i,j}$ between each unit.
We set $A(x) = I$ and $\|\cdot\|_{X} = \|\cdot\|_{\infty}$, hence $d_{A}(x,y) = \|x-y\|_{\infty}$. We set $B(x) = I$. 
We must find conditions so that
  Assumptions \ref{asu:lipVA}, \ref{asu:differentiable}, \ref{asu:lipVB}
  and \ref{asu:f} hold. 
 After setting $\Theta$ to be an arbitrary open ball, the only non-trivial part is the contraction criteria, part \ref{part-ctr-of-l-asu} of Assumption \ref{asu:f}.
Observe that
$
\tfrac{\partial f_i}{\partial x_j}(x,\xi,\theta)
=
\sigma'(u_i(x,\xi,\theta))\xi_{i,j}\theta_{i,j}
.$
With the norm 
$\|\cdot\|_{\infty}$
on $X$ and as $|\sigma'(u)| \leq \tfrac{1}{4}$,
  $$\|
    \tfrac{\partial f}{\partial x}(x,\xi,\theta)
  \|_{\infty} 
  \leq
  \tfrac{1}{4}
  \|\theta\|_{\infty}
    \sup_{i,j}\xi_{i,j}.
  $$
 Note that 
 $
 \left(\int_{\Xi}\left(\sup_{i,j}\xi_{i,j}\right)^2\, \mathrm{d}\nu(\xi)\right)^{1/2} =
 \left(1 - \nu(\xi = 0)\right)^{1/2} =
 \left(1- \rho^{|E|}\right)^{1/2},
 $
 so a sufficient condition for contraction in $d_2$ is
 $\|w\|_{\infty}(1-\rho^{|E|})^{1/2} < 4$.
 The matrix norm induced by
 $\|\cdot\|_{\infty}$
 is the maximum absolute row sum;
 then the condition is that
 the sum of magnitudes of
 incoming weights at each node must 
 be bounded in this way.

The requirements for applying forward sensitivity analysis are met.
For completeness we derive the exact form of the sensitivity system.
The space 
$M$
consists of the linear maps from 
$\mathbb{R}^{N\times N}$
to
$\mathbb{R}^{N}$ and
$
\frac{\partial f_{i}}{\partial \theta_{(j,k)}}(x,\xi,\theta)
=
 \delta_{i,j}\sigma'(u_i(x,\xi,\theta))\xi_{i,k}x_k
 .$
 We use subscripts to denote time, and $v(k)$ means the $k^{th}$ component of vector $v$.
\begin{align*}
  &x_{n+1}(i)
  = \sigma(u(x_n,\xi_{n+1},\theta)(i)) \\
  &m_{n+1}(i,(j,k))\\&\quad\quad=
    \sigma'(u(x_n,\xi_{n+1},\theta)(i))
    \left[
    \delta_{i,j}\xi_{n+1}(i,k)x_{n}(k)
    +
    \sum\limits_{q=1}^{n}\xi_{n+1}(i,q)\theta(i,q)m_n(q,(j,k))
    \right]
\end{align*}
At time $n+1$, node $i$ has to pull from each node $q$ that connects to it the data $m_{n}(q,(j,k))$ and the state variable $x_n(q)$.
\end{example}
\begin{example}
Let 
$\Xi = \mathbb{R}^{2}$
and let
$\nu$ 
be the law of two 
independent random variables $\xi_1,\xi_2$, 
such that $\mathbb{E}[\exp(6|\xi_1|) + |\xi_2|^{2}] < \infty$.
Let $f:\mathbb{R}^{2} \times \Xi\times \Theta \to \mathbb{R}^{2} $ be the function
\begin{equation}\label{ex2-def}
f(x,\xi,\theta)
=
\Big( 
f_{1}(x_1,\xi,\theta)
,
f_{2}(x_1,x_2,\xi,\theta)
\Big)
\end{equation}
where
$
f_{1}(x_1,\xi,\theta) = 
\tfrac{1}{2} x_1 + \theta + \epsilon \xi_1$ and $f_{2}(x_1,x_2,\xi,\theta) = 
\tfrac{1}{2} x_1 x_2 + \epsilon \xi_2
$
Let $g_1,g_2$ be the real valued functions
$g_{1}(x) = \exp(2|x_1|)(1 + |x_2|)$
and
$g_{2}(x) = \exp(2|x_1|)$.
The metric $d_{A}$ will be
defined using the pair $(\|\cdot\|,A)$ 
where 
$\|(u,v)\| = p_1|u| + p_2|v|$ and
$A(x) = g_{1}(x) \oplus g_{2}(x)$,
with
$p_1,p_2$
 determined below.
The parameter $\theta$ is a number and
 $B$ is  
$B(x) = g_1(x)$. 
We seek conditions on $\epsilon$ and $\theta$ that guarantee contraction and the applicability of stochastic forward sensitivity analysis.  We find the following:
\begin{prop}\label{prop:example2-setup}
Let the following hold
\begin{enumerate}
   \renewcommand{\theenumi}{\roman{enumi}}
\item
  The parameter space is
  $\Theta = (-\frac{1}{4}\log 2, \frac{1}{4}\log 2)$,
  \label{ex2-theta-bound}
\item 
  $\epsilon<1$ and
  $
  \left(
    1
    +
    \epsilon \left(\int_{\Xi}|\xi_2|^{2}\, \mathrm{d}\nu(\xi)\right)^{1/2}
  \right)
  \left(
    \int_{\Xi}\exp(2\epsilon|\xi_1|)^{2}\, \mathrm{d}\nu(\xi)
  \right)^{1/2}
  < 2^{1/4}$,
\label{ex2-epsilon-bound}
\item 
  The coefficients
$p_1,p_2$
are any positive numbers such that 
$1+\frac{p_2}{p_1} < 2^{1/4}$.
\label{ex2-pi-bound}
\end{enumerate}
For $\theta\in \Theta$ the stochastic forward sensitivity method is applicable for the system (\ref{ex2-def}).
\end{prop}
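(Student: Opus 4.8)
The plan is to verify that the data $\{(\Xi,\Sigma,\nu),f,(\|\cdot\|,A),B\}$ satisfy Assumptions \ref{asu:lipVA}, \ref{asu:differentiable}, \ref{asu:lipVB}, and \ref{asu:f}, so that Theorem \ref{mainthm} applies and the method is justified. Three of these are routine. Since $A(x)=g_1(x)\oplus g_2(x)$ and $B(x)=g_1(x)$ with $g_1,g_2\ge 1$, each $A(x)$ is invertible, and because the operator norm of a diagonal matrix for $\|(u,v)\|=p_1|u|+p_2|v|$ is its larger diagonal entry, $\|A(x)^{-1}\|=\max(g_1(x)^{-1},g_2(x)^{-1})\le 1$; with $A$ clearly continuous this gives Assumption \ref{asu:lipVA}. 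The map $f$ is polynomial in $(x,\theta)$, hence $C^2$, which is the second part of Assumption \ref{asu:differentiable}; for the integrability part I would bound $d_A(x,f(x,\xi,\theta))$ by the length of the straight-line path exactly as in the proof of Proposition \ref{contraction-ptwise}, giving a bound on $d_A(x,f)^2$ by a fixed (in $\xi$) constant times a polynomial in $|\xi_1|,|\xi_2|$ multiplied by $\exp(4\epsilon|\xi_1|)$; since $\epsilon<1$ the exponent $4\epsilon$ lies strictly below $6$, so this is dominated by the integrable envelope furnished by $\mathbb{E}[\exp(6|\xi_1|)+|\xi_2|^2]<\infty$ (the margin up to $6$ absorbing the polynomial factors). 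For Assumption \ref{asu:lipVB} (here $n_\Theta=1$, $\|\cdot\|_\Theta=|\cdot|$) note $\|B(x)\|_\Theta=g_1(x)$, and using the Lipschitz criterion $\|\tfrac{\partial g_1}{\partial x}A(x)^{-1}\|\le K$ underlying Proposition \ref{quadratic-estimate} I would compute $\tfrac{\partial g_1}{\partial x}A(x)^{-1}=(2\,\mathrm{sgn}(x_1),\,\mathrm{sgn}(x_2))$, whose dual norm for $\|\cdot\|$ is $\max(2/p_1,1/p_2)<\infty$, so $g_1$ is $d_A$-Lipschitz.

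The substance is Assumption \ref{asu:f}. Differentiating, the only nonzero derivatives are $\tfrac{\partial f}{\partial x}=\big(\begin{smallmatrix}1/2&0\\ x_2/2&x_1/2\end{smallmatrix}\big)$, $\tfrac{\partial f}{\partial\theta}=(1,0)^{\top}$, and the single second-order entry $\tfrac{\partial^2 f_2}{\partial x_1\partial x_2}=\tfrac12$; in particular $\tfrac{\partial^2 f}{\partial\theta^2}=0$ and $\tfrac{\partial^2 f}{\partial x\partial\theta}=0$, so $L_{\Theta^2}\equiv 0$ and $L_{X,\Theta}\equiv 0$. For $L_{X^2}$ and $L_\Theta$ one checks only finiteness: $L_{X^2}$ carries a factor $g_2(f)/(g_1(x)g_2(x))$ whose exponential part is $\exp(2|f_1|-4|x_1|)\le\exp(-3|x_1|)\exp(2|\theta|+2\epsilon|\xi_1|)$, and $L_\Theta$ a factor $g_1(f)/g_1(x)$ with exponential part $\exp(2|f_1|-2|x_1|)$; in both cases the $|x_1|$-decay leaves an integrable envelope in $\xi$. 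Continuity of all $L_{X^i,\Theta^j}$ then follows by dominated convergence with the same envelopes.

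The main obstacle is the contraction bound $\sup_{x,\theta}L_X(x,\theta)<1$, which is exactly what the nontrivial metric is engineered for: the entry $\tfrac{\partial f_2}{\partial x_2}=\tfrac12 x_1$ is unbounded and must be tamed by the weight $\exp(2|x_1|)$. Writing $M(x,\xi)=A(f)\tfrac{\partial f}{\partial x}A(x)^{-1}$, its operator norm is the larger of the two weighted column sums
\[
\mathrm{col}_1=\tfrac12\,\frac{g_2(f)}{g_1(x)}\Big[\,1+|f_2|+\tfrac{p_2}{p_1}|x_2|\,\Big],
\qquad
\mathrm{col}_2=\tfrac12\,\frac{|x_1|\,g_2(f)}{g_2(x)}.
\]
The key estimates are $\exp(2|f_1|-2|x_1|)\le\exp(-|x_1|)\exp(2|\theta|+2\epsilon|\xi_1|)$ together with $\sup_{s\ge0}e^{-s}(c+\tfrac12 s)=c$ for $c\ge\tfrac12$ and $\sup_{s\ge0}se^{-s}=e^{-1}$; after using $|f_2|\le\tfrac12|x_1||x_2|+\epsilon|\xi_2|$ and dividing through by $1+|x_2|$, these yield the uniform-in-$x$ bound
\[
\|M(x,\xi)\|\le\tfrac12\exp(2|\theta|)\exp(2\epsilon|\xi_1|)\big(1+\tfrac{p_2}{p_1}+\epsilon|\xi_2|\big).
\]
Squaring, integrating, and using independence of $\xi_1,\xi_2$, Minkowski's inequality, and $1+a+b\le(1+a)(1+b)$ gives
\[
\sup_{x,\theta}L_X\le\tfrac12\exp(2|\theta|)\Big(1+\tfrac{p_2}{p_1}\Big)\Big(1+\epsilon\big(\textstyle\int|\xi_2|^2\,\mathrm{d}\nu\big)^{1/2}\Big)\Big(\textstyle\int\exp(2\epsilon|\xi_1|)^2\,\mathrm{d}\nu\Big)^{1/2}.
\]
Condition (\ref{ex2-theta-bound}) gives $\tfrac12\exp(2|\theta|)<2^{-1/2}$, condition (\ref{ex2-pi-bound}) gives $1+\tfrac{p_2}{p_1}<2^{1/4}$, and condition (\ref{ex2-epsilon-bound}) bounds the remaining product by $2^{1/4}$, so the right-hand side is strictly below $2^{-1/2}\cdot 2^{1/4}\cdot 2^{1/4}=1$. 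This furnishes a $K_X<1$ and completes Assumption \ref{asu:f}. The delicate point throughout is organizing the $L_X$ estimate so that the unbounded factor $x_1$ is absorbed by the exponential weight and the three small-constant conditions enter multiplicatively to land below $1$.
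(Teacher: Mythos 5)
Your proposal is correct and follows essentially the same route as the paper: both verify Assumptions \ref{asu:lipVA}, \ref{asu:differentiable}, \ref{asu:lipVB} and \ref{asu:f} so that Theorem \ref{mainthm} applies, compute the same weighted column-sum operator norm for $A(f)\tfrac{\partial f}{\partial x}A(x)^{-1}$, absorb the unbounded factor $|x_1|$ into the decay $\exp(-|x_1|)$, and land below $1$ by combining the three hypotheses multiplicatively through $1+a+b\le(1+a)(1+b)$ and $2^{-1/2}\cdot 2^{1/4}\cdot 2^{1/4}=1$. The only cosmetic difference is the order of operations in the $L_X$ estimate: you remove the $x$-dependence pointwise in $\xi$ first (via $\sup_{s\ge 0}e^{-s}(c+\tfrac12 s)=c$ and $\sup_{s\ge 0}se^{-s}=e^{-1}$) and then square and integrate, whereas the paper squares and integrates first and then applies the inequality $a+s\le a\exp(s/a)$.
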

\begin{proof}
  See the appendix for a sketch of the calculations involved.
\end{proof}
Based on the definition of $\mathcal{E}^{2}$, the cost functions are those
$e:\mathbb{R}^{2} \to \mathbb{R}$ satisfying
$\sup_{x}|\tfrac{\partial e}{\partial x_{i}}(x)|g_{i}(x)^{-1} <
\infty$ and $ \sup_{x}
|
\tfrac{\partial^{2} e}
{\partial x_{i}\partial x_{j}}(x)
|
g_{i}^{-1}(x)g_{j}^{-1}(x) 
<
 \infty$  for $1 \leq i,j \leq 2$.

Note that since $g_i \geq 1$ the functions in $\mathcal{E}$ include those with 
$\sup_{x}
\|\frac{\partial e}{\partial x}(x)\|
<
\infty$
and
$\sup_{x}
\|
\frac{\partial^{2} e}{\partial x^{2}}(x)
\|
<
\infty$.
The joint process takes the following form. We denote the $k$th component of a vector 
$v$ by $v(k)$,  and use a subscript to denote time.
\begin{align*}
  x_{n+1}(1) 
  &= \tfrac{1}{2} x_n(1) + \theta + \epsilon\xi_{n+1}(1)\\
  x_{n+1}(2)
  &= \tfrac{1}{2} x_n(1)x_n(2) + \epsilon \xi_{n+1}(2) \\
  m_{n+1}(1)
  &= \tfrac{1}{2} m_{n}(1) 
  + 1 \\
  m_{n+1}(2)
  &= \tfrac{1}{2} x_n(2)
  m_{n}(1)
  +
  \tfrac{1}{2} x_n(1)
  m_{n}(2)
\end{align*}
\end{example}
\section{Discussion}\label{sect:discussion}
Our approach to establishing differentiability can be compared with works on \\measure-valued differentiation, such as \cite{heidergott2006measure, heidergott2003taylor}. The ergodicity framework  in those works is based on normed ergodicity \cite{borovkov}, while ours is also based on a norm but involves the derivatives of the cost functions as well. 
The approach to establishing differentiability is based on setting up a certain equation between linear functionals, showing that any solution to that equation must evaluate the stationary derivative, and showing that the equation indeed has a solution. In this sense it is similar to \cite{vazquez1992estimation}, which works with the class of bounded measurable cost functions, and in a different ergodicity framework. The work \cite{pflug-mvd} also used contraction in the Wasserstein distance in an ergodicity framework for stationary gradient estimation.
 
This work was motivated by derivative estimation and optimization in neural networks. The back-propagation procedure is based on \textit{adjoint sensitivity analysis}, as opposed to the forward sensitivity analysis studied here.
 Adjoint sensitivity analysis is often preferred as the auxiliary system in this case evolves in a space which has dimension $n_{X}$ as opposed to $n_{\theta}\times n_{X}$. In \cite{flynn_neco, flynn_mtns} the author analyzed joint gradient estimation/optimization schemes based on adjoint sensitivity analysis. It may be that the methods of this paper can be extended to adjoint sensitivity analysis. A counter example to this possibility would also be very interesting.

Another interesting extension may be to recursively apply the construction to obtain estimators for higher derivatives. Calculating
$
\frac{\partial^{2}}{\partial \theta^{2}}
\mathbb{E}_{\pi_{\theta}}[e(x)]
$
should be equivalent to computing
$
\frac{\partial}{\partial \theta}\mathbb{E}_{\gamma_{\theta}}[g(x)]
$ for the ``cost function''
$
g(x) = \frac{\partial e}{\partial x}(x)m
$. 
\begin{center}{\bf Appendix}\end{center}
\appendix
\subsubsection*{Notations:}
$\Theta$ - space of parameters,
$n_{X}$ - dimensionality of state space for underlying system,
$n_{\Theta}$ - dimensionality of parameter space,
$L(\mathbb{R}^{n},\mathbb{R}^{m})$ - space of linear maps from $\mathbb{R}^{n}$ to $\mathbb{R}^{m}$,
$M$ - the space $L(\mathbb{R}^{n_{\Theta}},\mathbb{R}^{n_{X}})$,
$\|V\|_{L^{p}(\mu)}$ - short hand for 
$\left(\int_{X}\|V(x)\|^{p}\,\mathrm{d}\mu\right)^{1/p}$.
$\mathcal{P}(X)$ - Borel probability measures on $X$,
$\mathcal{P}_{p,V}(X)$ - measures in $\mathcal{P}(X)$ that such that $\|V\|_{L^{p}(\mu)} < \infty$,
$d_{A}$ - metric induced by a Finsler structure,
$\mathcal{P}_{p,A}(X)$ - measures such that $\int_{X}d_{A}(x,x_0)^{p}\,\mathrm{d}\mu(x) < \infty$,
$d_{p,A}$ - Wasserstein distance on the space $\mathcal{P}_{p,A}$,
$\|\cdot\|_{Lip}$ - Lipschitz constant for a function between metric spaces,
$(E \oplus F)(u,v) = (Eu,Fv)$ - direct sum of linear maps; $(E \oplus F)(u,v) = (Eu,Fv)$.
$\|\cdot\|_{A,A}$ - norm for a bilinear map: $\|m\|_{A,A} = \sup_{\|u\|=\|v\|=1}\|A[u,v]\|$, 
$\|\cdot\|_{\mathcal{E}^{2}}$ - the norm 
$\|e\|_{\mathcal{E}^{2}} = 
   \|e\|_{A} + 
   \|\frac{\partial e}{\partial x}\|_{A} + 
   \|\frac{\partial^2 e}{\partial x^{2}}\|_{A,A}$, 
 $I_{n}$ - identity matrix on $\mathbb{R}^{n}$.
\begin{proof}[Proof of Proposition \ref{conditions-for-metric}]
The metric axioms can be shown as in \cite{burago2001course}, Chapter 2.
We show the completeness. 
The condition on $A(x)^{-1}$ means that for some $k$ the inequality
\begin{equation}\label{domination}
\|x - y\| \leq k d_A(x,y)
\end{equation}
holds for all $x,y \in X$.
The continuity of $A$ means that $\|A\|$ is bounded on compact subsets of $X$. Combining this with (\ref{domination}) it follows that $d_A$ and the metric determined on $\|\cdot\|$
are strongly equivalent on compact subsets of $X$.
Using (\ref{domination}) one can show that any $d_A-$Cauchy sequence is contained 
in a compact subset of $X$. By the strong equivalence $d_A$ is complete on this subset.
\end{proof}
\begin{proof}[Proof of Proposition \ref{quadratic-estimate}]
  We will make use of the following inequality:
  Whenever 
  $\gamma : [0,T] \to X$
  is a curve from $x$ to $y$ 
  that is (i) parameterized by arc length,
  and (ii) such that 
  $L(\gamma) \leq d_A(x,y) + \epsilon$, then
  \begin{align}\label{coolineq}
    \int_{0}^{T}d_A(\gamma(t),x)\, \mathrm{d}t \leq \frac{(d_A(x,y) + \epsilon)^{2}}{2}
  \end{align}
  To see this, note that
  for any curve parameterized by arc length,
  $d_A(\gamma(t),x) \leq t$.
  Integrating both sides of this inequality 
  and using the first assumption yields the result.

  We now proceed to the proof of part \ref{quadratic-pointwise}. 
  Let $h:X\to\mathbb{R}^{n}$ 
  be a function satisfying the assumptions of the Proposition.
  Given
  $\epsilon > 0$ ,
  let 
  $\gamma : [0,T] \to X$ be a piecewise $C^{1}$
  curve from $x$ to $y$ with 
  $L(\gamma) \leq d_A(x,y) + \epsilon$.
  Assume that $\gamma$ is parameterized by arc length.
  By the identity $\gamma'(t) = A(\gamma(t))^{-1}A(\gamma(t))\gamma'(t)$, and the assumption on $h$,
  \begin{align*}
    \|h(x) - h(y)\|
    &\leq
      \int_{0}^{T}
      \|
      \tfrac{\partial h}{\partial x}(\gamma(t))\gamma'(t)
      \|\, \mathrm{d}t \\
    &=
      \int_{0}^{T}
      \|
      \tfrac{\partial h}{\partial x}(\gamma(t))
      A(\gamma(t))^{-1}
      A(\gamma(t))\gamma'(t)
      \|\, \mathrm{d}t
    \\&\leq
    \int_{0}^{T}
    B(\gamma(t))
    \, \mathrm{d}t
\end{align*}
Seeing as $B$ is Lipschitz, and invoking inequality (\ref{coolineq}),
\begin{align*}
  &\leq
    \int_{0}^{T}
    \Big(B(x) + \|B\|_{Lip}d_A(\gamma(t),x)\Big)
    \, \mathrm{d}t
    \\&\leq
    B(x)\int_{0}^{T}1\, \mathrm{d}t
    +
    \|B\|_{Lip}\int_{0}^{T}d_A(\gamma(t),x)
    \, \mathrm{d}t 
    \\&\leq
    B(x)[d_A(x,y) + \epsilon]
    +
    \|B\|_{Lip}[ d_A(x,y)^{2}/2 + d_A(x,y)\epsilon + \epsilon^{2}/2]
\end{align*}
Since $\epsilon$ was arbitrary, we have
$\|h(x) - h(y)\|
\leq
B(x)d_A(x,y) + \frac{1}{2}\|B\|_{Lip}d_A(x,y)^{2}$. 

For part \ref{quadratic-measure},
let 
$\gamma$ 
be any coupling of $\mu_1$ with $\mu_2$ such that\\
$
\left(
  \int_{X\times X}d_A(x,y)^{2}\, \mathrm{d}\gamma(x,y)
\right)^{1/2} 
\leq
d_{2,A}(\mu_1,\mu_2) + \epsilon
$.
Then
\begin{align*}
&\left\|\int_{X}h(x)\, \mathrm{d}\mu_1(x) - \int_{X}h(y)\, \mathrm{d}\mu_2(y) \right\| 
\\
&\quad\quad\leq
\int_{X\times X}\|h(x) - h(y)\|\, \mathrm{d}\gamma(x,y) \\
&\quad\quad\leq
\int_{X\times X}
B(x)d_A(x,y)
\, \mathrm{d}\gamma(x,y) 
+
\tfrac{1}{2}\|B\|_{Lip}
\int_{X\times X}
d_A(x,y)^{2}
\, \mathrm{d}\gamma(x,y) \\
&\quad\quad\leq
\|B\|_{L^{2}(\mu_1)}
(d_{2,A}(\mu_1,\mu_2) + \epsilon) + 
\tfrac{1}{2}\|B\|_{Lip}(d_{2,A}(\mu_1,\mu_2) + \epsilon)^{2}
\end{align*}
Since $\epsilon>0$ was arbitrary we are done.

\end{proof}
\begin{proof}[Proof of Proposition \ref{prop:example2-setup}]
We verify
Assumptions 
\ref{asu:lipVA} -
\ref{asu:f}.
For Assumption \ref{asu:lipVA}, the continuity is obvious. As $A$ has a diagonal structure, $\|A(x)^{-1}\| = \max\{ g_{1}(x)^{-1},\,\\g_{2}(x)^{-1}\}$, so it is clear that $\|A(x)^{-1}\| \leq 1$ for all $x$.

For 
Assumption \ref{asu:differentiable}, 
the differentiability is evident. For the integrability, using the basepoint 
$(0,0)$ it suffices that
$\left(
  \int_{\Xi}d_{A}(0,f(x,\xi,\theta))^{2}\, \mathrm{d}\nu(\xi)
\right)^{1/2} < \infty$
for any $(x,\theta) \in X\times\Theta$.
Consider the curve 
$t\mapsto t \, f(x,\xi,\theta)$, for $t \in [0,1]$, from
$0$
to
$f(x,\xi,\theta)$. Then
$
d_{A}(0,f(x,\xi,\theta))
\leq
\int_{0}^{1}\|A(t\,f(x,\xi,\theta))f(x,\xi,\theta)\|\, \mathrm{d}t$.
Next, by definition of $\|\cdot\|$,
\begin{align*}
  &\|A(t\,f(x,\xi,\theta))f(x,\xi,\theta)\|
  %\\ &\quad\quad
 = p_1\,|g_1(t\,f(x,\xi,\theta))f_1(x,\xi,\theta)|
    + p_2\,|g_2(t\,f(x,\xi,\theta))f_2(x,\xi,\theta)|
\end{align*}
For the first term on the right hand side of this equation we have
\begin{align*}
&|g_{1}(t\,f(x,\xi,\theta))f_1(x,\xi,\theta)| \\
&\quad\quad=
\exp(2|t\tfrac{1}{2}x_1 + t\theta + t\epsilon\xi_1|)
(1 + |t\tfrac{1}{2}x_1x_2 +t\epsilon\xi_2|)
|\tfrac{1}{2}x_1 + \theta + \epsilon\xi_1| \\
&\quad\quad\leq
\exp(|x_1|)\exp(2|\theta|)\exp(2\epsilon|\xi_1|)
(1+\tfrac{1}{2}|x_1||x_2| + \epsilon|\xi_2|)
  (\tfrac{1}{2}|x_1| + |\theta| + \epsilon|\xi_1|)
\\
&\quad\quad\leq
\exp(2|x_1| + |x_1||x_2|)\exp(2|\theta|)\exp(3\epsilon|\xi_1|)
(1+\epsilon|\xi_2|)
\end{align*}
In the last inequality we used the fact that $\theta < 1/2$.
Likewise, for the second term,
\begin{align*}
|g_2(tf(x,\xi,\theta))f_2(x,\xi,\theta)| &=
\exp(2|t\tfrac{1}{2}x_1 + t\theta + t\epsilon\xi_1|)|\tfrac{1}{2}x_1x_2 + \epsilon\xi_2| \\
&\leq
\exp(|x_1| + |x_1x_2|)\exp(2|\theta|)\exp(2\epsilon|\xi_1|)\epsilon|\xi_2|
\end{align*}
Combining these we obtain a bound for 
$d_{A}(0,f(x,\xi,\theta))$:
\begin{align}
d_{A}(0,f(x,\xi,\theta))
&\leq
p_1\exp(2|x_1| + |x_1x_2|)\exp(2|\theta|)\exp(3\epsilon|\xi_1|)(1+\epsilon|\xi_2|) \nonumber \\
&\quad+
p_2\exp(|x_1| + |x_1x_2|)\exp(2|\theta|)\exp(2\epsilon|\xi_1|)\epsilon|\xi_2|
 \nonumber \\
&\leq (p_1+p_2)\exp(2|x_1| + |x_1x_2|)\exp(2|\theta|)\exp(3\epsilon|\xi_1|)(1+\epsilon|\xi_2|) \label{cool-label}
\end{align}
Let
$Q 
= \left(\int_{\Xi}|\xi_2|^{2}\, \mathrm{d}\nu(\xi)\right)^{1/2}$ 
and set 
$R 
= \left(\int_{\Xi}\exp(2\epsilon|\xi_{1}|)^{2}\,\mathrm{d}\nu(\xi)\right)^{1/2}$.
Squaring and integrating (\ref{cool-label}) yields
\begin{align*}
&\left(\int_{\Xi}d_A(0,f(x,\xi,\theta))^{2}\, \mathrm{d}\nu(\xi)\right)^{1/2} \\
&\quad\leq
(p_1+p_2)\exp(2|x_1|+|x_1x_2|)\exp(2|\theta|)
  \left(\int_{\Xi}\exp(3\epsilon|\xi_1|)^{2}\, \mathrm{d}\nu(\xi)\right)^{1/2} 
  (1 +\epsilon Q)
\end{align*}
which is finite by assumption that $\exp(6|\xi_1|)$ is integrable and that $\epsilon < 1$.

For 
Assumption \ref{asu:lipVB},
the invertibility of
$B(x)$ follows since
$g_{1} > 1$. 
Next, we show
$\|B(x)\|$ 
is Lipschitz for  
$d_{A}$.
Since $\|e\|_{Lip}=\|\tfrac{\partial e}{\partial x}\|_{A}$
when $e$ is differentiable, the Lipschitz continuity of
$g_1$
can be shown as follows.
Let 
$x = (x_1,x_2)$
be a point of differentiability for $(|x_1|,|x_2|)$, and let $p_1|u| + p_2|v| = 1$. Then
\begin{align*}
  |\tfrac{\partial g_1}{\partial x}(x)A(x)^{-1}(u,v)| 
  &= 
    |
    \tfrac{\partial g_1}{\partial x}(x)
    (g_1(x)^{-1}u, g_{2}(x)^{-1}v)
    |
  \\
  &=
    |
    \tfrac{\partial g_1}{\partial x_1}(x)g_1(x)^{-1}u 
    +
    \tfrac{\partial g_1}{\partial x_2}(x)g_2(x)^{-1}v
    | 
\\&\leq
    \max\{
    \tfrac{1}{p_1}
    |\tfrac{\partial g_1}{\partial x_1}(x)g_1(x)^{-1}|,
    \tfrac{1}{p_2}
    |\tfrac{\partial g_1}{\partial x_2}(x)g_2(x)^{-1}|
    \}
\end{align*}
where
$|\tfrac{\partial g_1}{\partial x_1}(x)g_1(x)^{-1}|
\leq
2$
and
$|\tfrac{\partial g_1}{\partial x_2}(x)g_2(x)^{-1}|
\leq
1$.
By an argument using a mollification of $|\cdot|$, this is extended to all points of $X$. Therefore $\|g\|_{Lip} \leq \max\{\tfrac{2}{p_1},\tfrac{1}{p_2}\}$. 
We turn to the functions
$L_{X^{i},\Theta^{j}}$,
starting with $L_{X}$.
Observe the inequalities
\begin{equation}\label{ineq1}
\begin{split}
&g_1(f(x,\xi,\theta))\,
|\tfrac{\partial f_1}{\partial x_1}(x,\xi,\theta)|
\,g_1(x)^{-1} \\
&\quad\leq
\tfrac{1}{2} \exp(2|\theta|)\exp(2\epsilon|\xi_1|)
\exp(|x_1|)
(1+\tfrac{1}{2}|x_1|+\epsilon|\xi_2|)
\exp(-2|x_1|),
\end{split}
\end{equation}
\begin{equation}\label{ineq2}
\begin{split}
&g_{2}(f(x,\xi,\theta))\,
|\tfrac{\partial f_2}{\partial x_1}(x,\xi,\theta)|
\,g_{1}(x)^{-1} 
\\&\quad\leq
\tfrac{1}{2}\exp(2 |\theta|)\exp(2\epsilon|\xi_1|)
\exp(|x_1|)
\exp(-2|x_1|),\quad\quad\quad\quad\quad\quad\quad\quad
\end{split}
\end{equation}
and
\begin{equation}\label{ineq3}
\begin{split}
&g_{2}(f(x,\xi,\theta)) \,
|\tfrac{\partial f_2}{\partial x_2}(x,\xi,\theta)|
\, g_{2}(x)^{-1}
\\&\quad\leq
\tfrac{1}{2}\exp(2 |\theta|)\exp(2 \epsilon|x_1|)
\exp(|x_1|)
|x_1|
\exp(-2|x_1|).\quad\quad\quad\quad\quad\quad
\end{split}
\end{equation}
Next, note that
\begin{align*}
&\|
A(f(x,\xi,\theta))
\tfrac{\partial f}{\partial x}(x,\xi,\theta)
A(x)^{-1}
\| 
\\&\leq 
\max\Big\{
  g_{1}(f(x,\xi,\theta))
  |\tfrac{\partial f_{1}}{\partial x_{1}}(x,\xi,\theta)|
  g_{1}(x)^{-1} 
  +
  \tfrac{p_{2}}{p_1}
  g_{2}(f(x,\xi,\theta))
  |\tfrac{\partial f_{2}}{\partial x_{1}}(x,\xi,\theta)|
  g_{1}(x)^{-1},
  \\&\quad\quad\quad\quad
  g_{2}(f(x,\xi,\theta))
  |\tfrac{\partial f_{2}}{\partial x_{2}}(x,\xi,\theta)|
  g_{2}(x)^{-1}
  \Big\}
\end{align*}
Combining this with the three inequalities (\ref{ineq1}), (\ref{ineq2}), (\ref{ineq3}), we get
\begin{align*}
&\|A(f(x,\xi,\theta))\tfrac{\partial f}{\partial x}(x,\xi,\theta)A(x)^{-1}\|
\\&\leq
\tfrac{1}{2}\exp(2|\theta|)\exp(2\epsilon|\xi_1|)
\exp(|x_1|)
\max\{
  1 + \tfrac{1}{2}|x_1| + \epsilon|\xi_{2}| + \tfrac{p_2}{p_1} 
  ,
  |x_1|
\}
\exp(-2|x_1|) \\
&\leq
\tfrac{1}{2}\exp(2|\theta|)\exp(2\epsilon|\xi_1|)
\exp(|x_1|)
[1 + |x_1| + \epsilon|\xi_{2}| + \tfrac{p_2}{p_1}]
\exp(-2|x_1|)
\end{align*}
Squaring and integrating the right-hand side of the last inequality, and using the independence of the 
$\xi_1$ and $\xi_2$
variables yields
\begin{align*}
  L_{X}(x,\theta)
  &\leq \tfrac{1}{2} 
  \exp(2|\theta|)
  R
  \exp(|x_1|)
  (1 + 
    \epsilon
    Q
     +
    \tfrac{p_2}{p_1} 
    +
    |x_1|)
    \exp(-2|x_1|)
\end{align*}
This is a continuous function of $(x,\theta)$, so the continuity of $L_{X}$ holds. We now show the contraction property.
Using the inequality $a + x \leq a\exp(\frac{x}{a})$ we get
\begin{align*}
&\leq
(1+\epsilon Q + \tfrac{p_2}{p_1})
\tfrac{1}{2}
\exp(2|\theta|)
R 
%\\&\quad\quad
\exp\left(\left[1  + (1 + \epsilon Q + \tfrac{p_2}{p_1})^{-1}\right]|x_1|\right)
\exp(-2|x_1|)
\end{align*}
Based on this, 
the contraction property holds if
$\epsilon,\theta,p_1,p_2$ are
such that
$
(1+\epsilon Q + \tfrac{p_2}{p_1})
\exp(2|\theta|)
R
<
2$
and one can verify that Assumptions (\ref{ex2-theta-bound}), (\ref{ex2-epsilon-bound}) and (\ref{ex2-pi-bound}) mean that this indeed is the case.
Now consider $L_{\Theta}$. Let
$\|\cdot\|_{\Theta} = |\cdot|$.
Then $
\|
A(f(x,\xi,\theta))
\tfrac{\partial f}{\partial \theta}(x,\xi,\theta)
B(x)^{-1}
\| 
=
g_1(f(x,\xi,\theta))g_1(x)^{-1}.
$
Using a similar analysis as above,
$$
g_{1}(f(x,\xi,\theta))g_{1}(x)^{-1} \leq
\exp(2|\theta|)
\exp(2\epsilon|\xi_1|)
\exp(|x|)(1+\tfrac{1}{2}|x_1| + \epsilon|\xi_2|)\exp(-2|x_1|)
$$
Squaring and integrating the right-hand side of this equation yields
\begin{align*}
L_{\Theta}(x,\theta) &\leq
  \exp(2|\theta|)
 R
  \exp(|x_1|)
  (1+\tfrac{1}{2}|x_1|+\epsilon Q)
  \exp(-2|x_1|) \\&\leq
  (1+\epsilon Q)\exp(2|\theta|)
  R
  \exp(
  [1 + \tfrac{1}{2(1+\epsilon Q)} -2]|x_1|
  )
 % \\&\quad
  \leq
  (1+\epsilon Q)\exp(2|\theta|)
  R
  \end{align*}
From the first inequality we can see that $L_{\Theta}$ is continuous. From the last we can see that $L_{\Theta}$ is bounded on the set $X\times\Theta$.
It remains to verify conditions on the higher derivatives.
The higher derivatives vanish except for 
$\frac{\partial^{2} f}{\partial x^{2}}$.
This is defined as follows
$$
\tfrac{\partial^{2} f_{k}}
     {\partial x_{i}\partial x_{j}}(x,\xi,\theta)
     =
   \begin{cases}
  \tfrac{1}{2} \text{ if } k = 2 \text{ and } i \neq j,\, 0 \text{ otherwise.}
    \end{cases}
$$
\iffalse We must show that
$$
\sup_{(x,\theta)\in X\times\Theta}
\int_{\Xi}
\|
A(f(x,\xi,\theta))
\tfrac{\partial^{2}f}{\partial x^{2}}
  (x,\xi,\theta)
(A(x)^{-1}\otimes A(x)^{-1})
\|
\, \mathrm{d}\nu(\xi) 
< \infty$$ \fi
For $i=1,2$ we have
$A(x)^{-1}e_i = g_i^{-1}(x)e_i$
and by basic properties of bilinear maps,
\begin{align*}
&A(f(x,\xi,\theta))
\tfrac{\partial^{2}f}{\partial x^{2}}(z)
( A(x)^{-1}e_i , A(x)^{-1}e_j )
%\\&\quad\quad
= 
A(f(x,\xi,\theta))g_{i}^{-1}(x)g_{j}^{-1}(x)
\tfrac{\partial^{2} f}
     {\partial x_i\partial x_j}(x,\xi,\theta)
\end{align*}
Note that
$\frac{\partial^{2} f}{\partial x_i\partial x_j}(x,\xi,\theta) = 0$
if $i = j$. 
When $i\neq j$  we have
$\frac{\partial^{2} f}
      {\partial x_i\partial x_j}(x,\xi,\theta)
      =
      (0,\tfrac{1}{2})$
and 
$
A(f(x,\xi))
g_{1}^{-1}(x)g_{2}^{-1}(x)
\Big(0,\tfrac{1}{2}\Big) 
= 
\Big(0, g_{2}(f(x,\xi))g_{1}^{-1}g_{2}^{-1}(x)\Big)
.$
Then for any $i,j$, 
$$
\|
A(f(x,\xi,\theta))
\tfrac{\partial^{2} f}{\partial x^{2}}(x,\xi,\theta)
(A(x)^{-1}e_i,
 A(x)^{-1}e_j)
\|
\leq 
p_2g_{2}(f(x,\xi,\theta))g_{1}(x)^{-1}g_{2}(x)^{-1}.$$
Note that
$|g_{1}^{-1}(x)| \leq 1$, and the norms $\|\cdot\|_{1}$ and $\|\cdot\|_{X}$ satisfy
$
\|\cdot\|_{1}
\leq
\max\{\tfrac{1}{p_1},\tfrac{1}{p_2}\}
\|\cdot\|_{X}$.
With this we get
\begin{align*}
  &\|
  A(f(x,\xi,\theta))
  \tfrac{\partial^{2}f}{\partial x^{2}}(x,\xi,\theta)
  ( A(x)^{-1}\oplus A(x)^{-1})
  \|
  \\&\quad\quad \leq
  \left(
    \max\{
      \tfrac{1}{p_1},\tfrac{1}{p_2}
    \}\right)^{2} 
    p_{2}g_{2}(f(x,\xi,\theta))g_{2}(x)^{-1}
    \\&\quad\quad=
    \max\{
       \tfrac{p_2}{p_1^{2}},\tfrac{1}{p_2}
     \}
     g_2(f(x,\xi,\theta))g_{2}(x)^{-1} 
   %\\&\quad\quad
  =\max\{\tfrac{p_2}{p_1^{2}},\tfrac{1}{p_2}\}\exp(2|\theta| + \epsilon|\xi_1|)
\end{align*}
Integrating yields
%\begin{align*}
 $L_{X^{2}}(x,\theta)
  \leq
  \max\{\tfrac{p_2}{p_{1}^{2}},\tfrac{1}{p_2}\}
  \exp(2|\theta|)
  \int_{\Xi}\exp(2\epsilon|\xi_1|)\, \mathrm{d}\nu(\xi)$,\\
%\end{align*}
which is bounded and continuous on $X\times\Theta$.
\end{proof}
\bibliography{s5}{}

\begin{thebibliography}{10}

\bibitem{griewank-book}
A.~Griewank and A.~Walther.
\newblock {\em Evaluating Derivatives}.
\newblock Society for Industrial and Applied Mathematics, second edition, 2008.

\bibitem{heidergott2003taylor}
Bernd Heidergott and Arie Hordijk.
\newblock Taylor series expansions for stationary markov chains.
\newblock {\em Advances in Applied Probability}, pages 1046--1070, 2003.

\bibitem{vazquez1992estimation}
Felisa~J V{\'a}zquez-Abad and Harold~J Kushner.
\newblock Estimation of the derivative of a stationary measure with respect to
  a control parameter.
\newblock {\em Journal of Applied Probability}, pages 343--352, 1992.

\bibitem{pflug-mvd}
G.~Ch. Pflug.
\newblock Gradient estimates for the performance of markov chains and discrete
  event processes.
\newblock {\em Annals of Operations Research}, 39(1):173--194, 1992.

\bibitem{rumelhart1986learning}
David~E Rumelhart, Geoffrey~E Hinton, and Ronald~J Williams.
\newblock Learning representations by back-propagating errors.
\newblock {\em Nature}, 323(6088):533--536, 1986.

\bibitem{pineda88}
Fernando~J Pineda.
\newblock Dynamics and architecture for neural computation.
\newblock {\em Journal of Complexity}, 4(3):216--245, 1988.

\bibitem{hairernotes}
Martin Hairer.
\newblock Ergodic properties of markov processes, lectures given at the
  university of warwick, available online at
  http://www.hairer.org/notes/markov.pdf.
\newblock 2006.

\bibitem{villani2008}
C{\'e}dric Villani.
\newblock {\em Optimal transport: old and new}, volume 338.
\newblock Springer Science \& Business Media, 2008.

\bibitem{steinsaltz}
David Steinsaltz.
\newblock Locally contractive iterated function systems.
\newblock {\em Ann. Probab.}, 27(4):1952--1979, 10 1999.

\bibitem{hairer2008}
Martin Hairer and Jonathan~C. Mattingly.
\newblock Spectral gaps in wasserstein distances and the 2d stochastic
  navier--stokes equations.
\newblock {\em Ann. Probab.}, 36(6):2050--2091, 11 2008.

\bibitem{stenflo}
{\"O}rjan Stenflo.
\newblock A survey of average contractive iterated function systems.
\newblock {\em Journal of Difference Equations and Applications},
  18(8):1355--1380, 2012.

\bibitem{borovkov}
A.~Hordijk A.~A.~Borovkov.
\newblock Characterization and sufficient conditions for normed ergodicity of
  markov chains.
\newblock {\em Advances in Applied Probability}, 36(1):227--242, 2004.

\bibitem{lohmiller}
Winfried Lohmiller and Jean-Jacques~E Slotine.
\newblock On contraction analysis for non-linear systems.
\newblock {\em Automatica}, 34(6):683--696, 1998.

\bibitem{forni}
Fulvio Forni and Rodolphe Sepulchre.
\newblock A differential lyapunov framework for contraction analysis.
\newblock {\em IEEE Transactions on Automatic Control}, 59(3):614--628, 2014.

\bibitem{russo}
Giovanni Russo, Mario Di~Bernardo, and Eduardo~D Sontag.
\newblock Global entrainment of transcriptional systems to periodic inputs.
\newblock {\em PLoS computational biology}, 6(4):e1000739, 2010.

\bibitem{Simpson}
John~W. Simpson-Porco and Francesco Bullo.
\newblock Contraction theory on riemannian manifolds.
\newblock {\em Systems \& Control Letters}, 65:74 -- 80, 2014.

\bibitem{pflug-book}
George~Ch. Pflug.
\newblock {\em Optimization of Stochastic Models : The Interface Between
  Simulation and Optimization}.
\newblock The Kluwer International Series in Engineering and Computer Science.
  Kluwer Academic Publishers, 1996.

\bibitem{joulin}
Ald{\'e}ric Joulin, Yann Ollivier, et~al.
\newblock Curvature, concentration and error estimates for markov chain monte
  carlo.
\newblock {\em The Annals of Probability}, 38(6):2418--2442, 2010.

\bibitem{heidergott2006measure}
Bernd Heidergott, Arie Hordijk, and Heinz Weisshaupt.
\newblock Measure-valued differentiation for stationary markov chains.
\newblock {\em Mathematics of Operations Research}, 31(1):154--172, 2006.

\bibitem{flynn_neco}
Thomas Flynn.
\newblock {Timescale Separation in Recurrent Neural Networks}.
\newblock {\em Neural Computation}, 27(6):1321--1344, 2015.

\bibitem{flynn_mtns}
Thomas Flynn.
\newblock Convergence of one-step adjoint methods.
\newblock In {\em Proceedings of the 22nd International Symposium on
  Mathematical Theory of Networks and Systems}, 2016.

\bibitem{burago2001course}
Dmitri Burago, Yuri Burago, and Sergei Ivanov.
\newblock {\em A course in metric geometry}, volume 415 of {\em Graduate
  Studies in Mathematics}.
\newblock American Mathematical Society, 2001.

\end{thebibliography}
\bibliographystyle{unsrt}
\end{document}